\newcommand{\beq}{\begin{equation}}
\newcommand{\eeq}{\end{equation}}
\newcommand{\bea}{\begin{eqnarray}}
\newcommand{\eea}{\end{eqnarray}}
\newcommand{\beas}{\begin{eqnarray*}}
\newcommand{\eeas}{\end{eqnarray*}}
\newtheorem{theorem}{Theorem}[section]
\newtheorem{proposition}[theorem]{Proposition}
\newtheorem{lemma}[theorem]{Lemma}
\theoremstyle{definition}
\newtheorem{example}[theorem]{Example}
\newtheorem{remark}[theorem]{Remark}
\newtheorem{examples}[theorem]{Examples}
\newtheorem{foo}[theorem]{Remarks}
\newcommand{\R}{\mathbb R}
\newcommand{\Heis}{\mathbb{H}}
\newcommand{\ep}{\epsilon}
\newcommand{\p}{\mathbb P}
\newcommand{\bS}{\mathbb S}
\newcommand{\fH}{\mathbb{H}}
\newcommand{\E}{\mathbb E}
\newcommand{\h}{\mathcal H}
\newcommand{\cH}{\mathcal H}
\newcommand{\pO}{{\partial\Omega}}
\newcommand{\bH}{{\mathbf H}}
\newcommand{\bQ}{{\mathbf Q}}
\newcommand{\bi}{{\mathbf i}}
\DeclareMathOperator{\Vol}{Vol}
\DeclareMathOperator{\length}{length}
\DeclareMathOperator{\diver}{div}
\title{Heat content and horizontal mean curvature on the Heisenberg group}
\author{Jeremy Tyson\footnote{Supported by NSF Grant DMS-1600650. \hfill\break {\it Key words and phrases:} Heisenberg group, heat content, horizontal perimeter, horizontal mean curvature, Brownian motion.} \hspace{15pt} Jing Wang \\ \\ Department of Mathematics \\ University of Illinois \\ 1409 West Green St. \\ Urbana, IL, USA \\ \\ tyson@illinois.edu \hspace{15pt} wangjing@illinois.edu}
\date{\today}
\begin{document}
\maketitle

\begin{abstract}
We identify the short time asymptotics of the sub-Riemannian heat content for a smoothly bounded domain in the first Heisenberg group. Our asymptotic formula generalizes prior work by van den Berg--Le Gall and van den Berg--Gilkey to the sub-Riemannian context, and identifies the first few coefficients in the sub-Riemannian heat content in terms of the horizontal perimeter and the total horizontal mean curvature of the boundary. The proof is probabilistic, and relies on a characterization of the heat content in terms of Brownian motion.
\end{abstract}

\tableofcontents

\section{Introduction}
Let us begin by recalling the classical heat content problem in Euclidean space. Let $\Omega \subset \R^n$ be a bounded domain with finite volume $\Vol(\Omega)$ and finite perimeter $P(\Omega)$. Denote by $v(x,t)$ the solution to the heat equation in $\Omega$ with Dirichlet boundary condition:
\begin{equation*}
\begin{matrix} v_t = \tfrac12 \triangle v & \mbox{in $\Omega \times (0,\infty)$,} \\ v(x,t) = 0 & \mbox{for $(x,t) \in \pO \times(0,\infty)$}, \\ v(x,0) = 1 & \mbox{for $x \in \Omega$}. \end{matrix}
\end{equation*}
The {\bf heat content} of $\Omega$ at time $t>0$ is defined to be
$$
\bQ_\Omega(t) = \int_\Omega v(x,t) \, dx.
$$
The short time asymptotics of $\bQ_\Omega$ are controlled by geometric data involving the domain $\Omega$ and its boundary. Intuitively, one expects that the rate of escape of heat from $\Omega$ will depend, to first order, on the perimeter of $\Omega$. Moreover, it is reasonable to further conjecture that subsequent corrections should involve some type of curvature invariant of $\pO$. The following result of van den Berg and Le Gall \cite{vdblg:heat} formalizes this intuition. If $\Omega$ has $C^3$ smooth boundary, then
\begin{equation}\label{Q-Euclidean}
\bQ_\Omega(t) = \Vol(\Omega) - \sqrt{\frac{2t}{\pi}} \sigma(\pO) + \frac14 t \int_{\pO} H_\pO \, d\sigma + o(t),
\end{equation}
where $\sigma$ denotes the surface area measure on $\pO$ and $H_\Sigma(x)$ denotes the mean curvature of a surface $\Sigma$ at $x$. (For smoothly bounded domains, the surface area $\sigma(\pO)$ coincides with the perimeter $P(\Omega)$.)

The asymptotic expansion \eqref{Q-Euclidean} is closely related to Ledoux's characterization of perimeter in terms of the heat equation, inspired by de Giorgi's original definition of perimeter \cite{degiorgi:teoria}. Let
$$
p_t(x,y) = (2\pi t)^{-n/2} \exp(|x-y|^2/2t)
$$
and let $u(x,t) = \int_\Omega p_t(x,y) \, dy$ solve the heat equation $u_t = \tfrac12 \triangle u$ in $\R^n \times (0,\infty)$ with $u(x,0) = \mathbbm{1}_\Omega(x)$. The {\bf heat content of $\Omega$ in $\R^n$} at time $t>0$ is
\begin{equation}\label{H-Euclidean}
\bH_\Omega(t) = \int_\Omega u(x,t) \, dx = \iint_{\Omega \times \Omega} p_t(x,y) \, dy \, dx\,.
\end{equation}
Ledoux \cite{ledoux:semigroup} identified the perimeter of $\Omega$ as follows:
\begin{equation}\label{Ledoux}
P(\Omega) = \lim_{t\to 0} \sqrt{\frac{2\pi}{t}} \iint_{\Omega \times \Omega^c} p_t(x,y) \, dy \, dx\,.
\end{equation}
From \eqref{H-Euclidean} and \eqref{Ledoux} it is easy to see that
$$
\bH_\Omega(t) = \Vol(\Omega) - \sqrt{\frac{t}{2\pi}} P(\Omega) + o(\sqrt{t}).
$$
An instructive comparison of these two problems can be found in van den Berg \cite{vdb:heat-and-perimeter}, where also the situation for domains with nonsmooth boundary is considered. For smooth boundaries, higher order terms in the short time expansion of $\bH_\Omega(t)$ were obtained by Angiuli--Massari--Miranda \cite{amm:heat-content}. Van den Berg and Gilkey \cite{vdbg:manifold} extended the theory to Riemannian manifolds and obtained further terms in the short time expansion of $\bQ_\Omega(t)$. We refer the interested reader to a pair of excellent survey articles by Gilkey \cite{gil:survey1}, \cite{gil:survey2}.

The sub-Riemannian Heisenberg group and more general nilpotent stratified Lie groups (i.e., Carnot groups) provide a natural testing ground for analysis and geometry beyond the Riemannian setting. The connection between horizontal perimeter and the sub-Riemannian heat equation has already been studied by Bramanti--Miranda--Pallara \cite{bmp:bv}, who obtained a precise analog of Ledoux's characterization in step two Carnot groups. Recently, Marola--Miranda--Shanmugalingam \cite{mms:mms} generalized such results even further into the category of metric measure spaces supporting a Poincar\'e inequality. However, it appears that, up to now, more precise asymptotics for heat content have not been studied, even in the setting of the Heisenberg group.

In this paper we identify short time asymptotics for the sub-Riemannian heat content $\bQ_\Omega$ (in the sense of van den Berg and Le Gall) for a smoothly bounded domain in the first Heisenberg group.

Let $\Heis$ denote the first Heisenberg group, let $X_1$ and $X_2$ denote the standard frame for the horizontal distribution, and let $\triangle_0 = X_1^2+X_2^2$ denote the subelliptic Laplacian. See section \ref{sec:prelim1} for definitions. We fix a bounded domain $\Omega \subset \Heis$ with boundary $\pO$, and let $v(x,t)$ denote the solution to the heat equation with Dirichlet boundary conditions:
\begin{equation}\label{eq:Dirichlet}
\begin{matrix} v_t = \tfrac12\triangle_0 v & \mbox{in $\Omega \times (0,\infty)$,} \\ v(x,t) = 0 & \mbox{for $(x,t) \in \pO \times(0,\infty)$}, \\ v(x,0) = 1 & \mbox{for $x \in \Omega$}. \end{matrix}
\end{equation}
As before, the heat content of $\Omega$ at time $t$ is defined to be
$$
\bQ_{\Omega}(t) = \int_\Omega v(x,t) \, dx,
$$
where the integral is taken with respect to the Haar measure on $\Heis$ (which agrees with Lebesgue measure in $\R^3$), and we are interested in the short time asymptotics of $\bQ_\Omega$.

We denote by $\sigma_0$ the horizontal perimeter measure on $\pO$, which is defined provided $\pO$ is at least $C^1$, and by $H_{\pO,0}(x)$ the horizontal mean curvature at $x \in \pO$, which is defined provided $\pO$ is at least $C^2$. For definitions of and further discussion about these geometric quantities, see subsection \ref{subsec:perim-and-mean-curvature}. We remark that the horizontal mean curvature of a surface $\Sigma$ is only defined pointwise at noncharacteristic points. In this paper we will assume that the boundary of $\Omega$ has no characteristic points.

Our main theorem provides an exact analog of \eqref{Q-Euclidean} in the Heisenberg setting.

\begin{theorem}\label{thm-heat-content-prob}
Let $\Omega$ be a bounded domain in $\fH$ with boundary $\partial \Omega$ which is of class $C^3$ and which is completely noncharacteristic. Then the asymptotic expansion
\begin{equation}\label{Q-Heisenberg}
\bQ_{\Omega}(t) = \Vol(\Omega) - \sqrt{\frac{2t}{\pi}} \sigma_0(\pO) + \frac{t}{4} \int_{\pO} H_{\pO,0}(s) \, d\sigma_0(s) + o(t)
\end{equation}
holds in the limit as $t \to 0$.
\end{theorem}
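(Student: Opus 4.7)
The plan is probabilistic, following the strategy of van den Berg--Le Gall \cite{vdblg:heat}. By Feynman--Kac applied to \eqref{eq:Dirichlet},
$$v(x,t) = \mathbb{P}^x(\tau > t), \qquad \tau := \inf\{s \geq 0 : B^H_s \notin \Omega\},$$
where $B^H$ is horizontal Brownian motion on $\fH$, i.e., the diffusion with generator $\tfrac12 \triangle_0$. Hence
$$\Vol(\Omega) - \bQ_\Omega(t) = \int_\Omega \mathbb{P}^x(\tau \leq t)\, dx,$$
and the task is to extract the two leading terms as $t \to 0$. Sub-Riemannian Gaussian upper bounds, combined with complete noncharacteristicity, show that $\mathbb{P}^x(\tau \leq t)$ is $o(t)$ uniformly in $x$ once $d_{\mathrm{cc}}(x,\pO) \geq C\sqrt{t \log(1/t)}$, so up to an $o(t)$ error the integration reduces to a shrinking tubular neighborhood $U_\delta := \{x \in \Omega : d_{\mathrm{cc}}(x,\pO) < \delta\}$ of $\pO$.

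In $U_\delta$ I would introduce a horizontal signed distance coordinate: because $\pO$ is $C^3$ and completely noncharacteristic, a local $C^2$ solution $\rho$ of the horizontal eikonal equation $|\nabla_H \rho|^2 \equiv 1$ with $\rho = 0$ on $\pO$ and $\rho > 0$ in $\Omega$ exists near $\pO$. The identity $H_{\pO,0} = -\diver_H(\nabla_H \rho / |\nabla_H \rho|)$ together with $|\nabla_H \rho| = 1$ gives $\triangle_0 \rho(s) = -H_{\pO,0}(s)$ on $\pO$. It\^o's formula applied to the semimartingale $\rho(B^H_t)$ then yields
$$d\rho(B^H_t) = dW_t + \tfrac12 \triangle_0 \rho(B^H_t)\, dt,$$
where $W_t$ is a standard one-dimensional Brownian motion (the quadratic variation being $|\nabla_H\rho|^2\, dt = dt$). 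Since $\tau$ equals the first hitting time of $0$ by $\rho(B^H)$, freezing the drift at its boundary value produces the key reduction
$$\mathbb{P}^x(\tau \leq t) = \mathbb{P}^{\rho}(\tau^{1\mathrm{D}}_0 \leq t) + o(t),$$
uniformly in $x = (s,\rho) \in U_\delta$, where $\tau^{1\mathrm{D}}_0$ is the hitting time of $0$ by a one-dimensional Brownian motion started at $\rho$ with constant drift $-\tfrac12 H_{\pO,0}(s)$.

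With this reduction in hand, the remainder is a classical boundary-layer computation. Parametrizing $U_\delta$ as $\pO \times (0,\delta)$ via the horizontal-normal flow, the Jacobian expands as $J(s,\rho) = 1 + \rho\, j_1(s) + O(\rho^2)$ relative to $d\sigma_0(s)\, d\rho$, with $j_1$ itself proportional to $H_{\pO,0}$. The law of $\tau^{1\mathrm{D}}_0$ has a closed form in error functions; expanding it together with $J$ in powers of $\rho$ and integrating gives a leading piece
$$\int_\pO d\sigma_0(s) \int_0^\infty \mathrm{erfc}\!\left(\tfrac{\rho}{\sqrt{2t}}\right)\, d\rho = \sqrt{\tfrac{2t}{\pi}}\, \sigma_0(\pO),$$
and a next-order contribution of order $t$ whose integrand is linear in $H_{\pO,0}(s)$, receiving contributions both from the drift correction in the 1D hitting law and from the Jacobian term $\rho\, j_1$. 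A direct calculation, structurally identical to the Euclidean computation in \cite{vdblg:heat}, pins down the combined coefficient at $\tfrac14$, yielding \eqref{Q-Heisenberg}.

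The main obstacle is the uniform $o(t)$ reduction to one dimension. Compared to the Riemannian case, two new difficulties appear: horizontal Brownian motion is hypoelliptic rather than elliptic, so one must verify that the Lévy-area ($Z$) component of $B^H$ makes no spurious contribution to the boundary-hitting statistics at order $t$; and the horizontal normal foliation is only as regular as the sub-Riemannian geometry permits, so uniform $C^2$ control of $\rho$ and of $\triangle_0\rho$ must be maintained on the shrinking tube of width $\sqrt{t\log(1/t)}$. Once these estimates are in place, the extraction of the two leading coefficients is mechanical.
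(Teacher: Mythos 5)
Your strategy is sound and the coefficients it produces are the right ones, but it takes a genuinely different route from the paper's. You reduce everything to the scalar process $\rho(B^H_t)$, where $\rho=d_{cc}(\cdot,\pO)$ solves the horizontal eikonal equation, so that It\^o's formula converts the exit problem into a one-dimensional hitting problem for Brownian motion with drift $-\tfrac12 H_{\pO,0}(s)$; the curvature term then splits as $+\tfrac{t}{2}\int H_{\pO,0}\,d\sigma_0$ from the Jacobian and $-\tfrac{t}{4}\int H_{\pO,0}\,d\sigma_0$ from the drift in the Bachelier--L\'evy hitting law. The paper never differentiates the distance function: after a time change it uses the Azencott--Castell stochastic Taylor expansion to replace the diffusion by the explicit process $\exp_x(-B^N_tN+B^T_tT+A_tZ)$, decomposes the exit event through the running maximum of $B^N$ and its argmax $\tau_t$ (whose joint law is explicit), and reads the curvature off the boundary graph $h(y,z;s)=\tfrac12H_{\pO,0}(s)y^2+k_1(s)z+\cdots$ via $\E[(B^T_{\tau_t})^2]=t/2$ and $\E[A_{\tau_t}]=0$; its two contributions $+\tfrac t2\int H_{\pO,0}$ (Jacobian, inside $I_1$) and $-\tfrac t4\int H_{\pO,0}$ (from $-I_2+I_3$) match yours term by term. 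Your route buys conceptual economy and, incidentally, dissolves your own worry about the L\'evy area: since It\^o's formula for $\rho(B^H_t)$ is exact, the vertical motion enters only through the argument of $\triangle_0\rho$, whereas in the paper the area $A_{\tau_t}$ appears explicitly and its non-contribution must be proved. What your route costs is that all the work is concentrated in the step you flag as the main obstacle, and there your claim is stated too strongly: the drift-freezing error in $\p^x(\tau\le t)$ is \emph{not} $o(t)$ uniformly in $x$ --- a perturbation of size $\epsilon=o(t)$ in the barrier changes the hitting probability by roughly $\epsilon\,t^{-1/2}e^{-\rho^2/2t}$, which is only $o(\sqrt t)$ when $\rho\asymp\sqrt t$ --- so you must prove the estimate after integration in $x$ (a Fubini argument over the starting height $\rho$ does close this, giving $o(t)$ total). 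You also need $\rho\in C^2$ with $\triangle_0\rho$ uniformly continuous on the tube, which is precisely where Ritor\'e's noncharacteristic tubular-neighborhood theory (Lemmas \ref{lemma-unique-p}--\ref{lemma-compare-vol-peri-mean-curv}) and the hypotheses $C^3$ and noncharacteristic enter; these points, together with the tail bounds for excursions out of the shrinking tube, are the analogue of Lemmas \ref{lemma-main-I}--\ref{lemma-main-III} and the appendix and still have to be written out.
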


This paper is structured as follows. Section \ref{sec:prelim1} reviews background material on the geometry of the Heisenberg group $\Heis$, especially the structure of tubular neighborhoods of smooth surfaces. Many results which we state are taken from a recent paper by Ritor\'e \cite{rit:tubular}. Section \ref{sec:prelim2} contains the necessary probabilistic preliminaries. We reformulate the problem in terms of the exit time of a Brownian motion process on $\Heis$, and perform a series of reductions which eventually allow us to deduce Theorem \ref{thm-heat-content-prob} from a corresponding theorem (Theorem \ref{thm-heat-content-prime}) for a stochastic process involving L\'evy's area form. We reduce the proof of the latter statement to three lemmas. In section \ref{sec:proof} we give the (rather technical) proofs of these lemmas. Some auxiliary calculations are deferred to an appendix for ease of exposition.

We conclude this introduction with some additional comments on the heat content problem in the Heisenberg group, and directions for future work.

First, we point out that there is an alternative approach to our main theorem which relies on the appearance of the sub-Riemannian metric on the Heisenberg group as a Gromov--Hausdorff limit of a sequence of Riemannian metrics. The use of this technique to establish results in sub-Riemannian geometry is by now a standard approach which has been used successfully by many authors. As a tool for understanding the sub-Riemannian geometry of submanifolds of Heisenberg groups, this approach featured prominently in the book \cite{cdpt:survey}. We anticipate that a careful analysis of the behavior of asymptotic formulas such as \eqref{Q-Euclidean} (or, more precisely, their Riemannian analogs as found in \cite{vdbg:manifold}) under degenerating limits of Riemannian metrics should reproduce our main asymptotic estimate \eqref{Q-Heisenberg} and possibly yield further terms in such expansions, similar to those found in Steiner's formula for the Carnot--Carath\'eodory metric \cite{BFFVW} and \cite{BTV}. We plan to return to this idea in a future paper.

The heat content $\bH_\Omega(t)$ of a domain $\Omega$ relative to the full Heisenberg group also deserves further study. As previously mentioned, the first order term (involving perimeter) in the short time expansion of $\bH_\Omega(t)$ has been identified by Bramanti, Miranda and Pallara, but analogs of the higher order formulas of Angiuli--Massari--Miranda \cite{amm:heat-content} remain unexplored in the Heisenberg setting, as do extensions to other Carnot groups. The case of higher dimensional Heisenberg groups, or perhaps general step two Carnot groups, should be a natural first step. Adapting the methods of this paper to those settings would require a precise understanding of the structure of tubular neighborhoods of hypersurfaces which is currently unavailable. The Riemannian approximation metholodogy described in the preceding paragraph, however, would in principle be effective in all such settings.

Finally, we would like to point out another possible extension of this heat content problem to other curved sub-Riemannian model spaces, such as the Cauchy-Riemann sphere $\bS^{2n+1}$ and anti-de Sitter space $AdS^{2n+1}$. Subelliptic heat kernels on these spaces are well understood, and explicit expressions can be obtained (see \cite{BB}, \cite{B}, \cite{CRS}, \cite{CRH}). In \cite{BW}, the authors studied Brownian motion processes on these model spaces as horizontal lifts of Brownian motions on complex protective space $\mathbb{CP}^n$ and complex hyperbolic space $\mathbb{CH}^n$ respectively, where the fiber motions are exactly given by the stochastic area processes on $\mathbb{CP}^n$ and $\mathbb{CH}^n$. Following a similar intuition as in present paper (as well as the analytic approach previously mentioned), one may proceed to obtain small time expansions of heat contents on these curved spaces, and observe the appearance of the curvatures of the ambient spaces.

\section{Geometric preliminaries}\label{sec:prelim1}

We model the Heisenberg group $\fH$ as the space $\R^3$ with the following group law:
\[
(x_1,x_2,x_3)*(y_1,y_2,y_3) = (x_1+y_1,x_2+y_2,x_3+y_3+x_1y_2-x_2y_1) \,.
\]
The left invariant vector fields
\[
X_1=\frac{\partial}{\partial x_1}-x_2\frac{\partial}{\partial x_3},\quad X_2=\frac{\partial}{\partial x_2}+x_1\frac{\partial}{\partial x_3},\quad X_3 =\frac{\partial}{\partial x_3}
\]
provide a global frame for the tangent bundle. The vector fields $X_1$ and $X_2$ span, at each point $x \in \fH$, the {\bf horizontal tangent space} $\cH_x\fH$, and an absolutely continuous curve $\gamma$ valued in $\Heis$ is said to be {\bf horizontal} if its tangent vector $\gamma'(t)$ lies in $\cH_{\gamma(t)}\Heis$ whenever it is defined. Introduce a metric $g_0$ on $\cH\fH$ by declaring $X_1$ and $X_2$ to be an orthonormal frame. The {\bf Carnot--Carath\'eodory (CC) metric} $d_{cc}$ is defined by
\begin{equation}\label{eq-cc-dist}
d_{cc}(x,y) = \inf \length_{cc}(\gamma)
\end{equation}
where the infimum is taken over all horizontal curves $\gamma:[a,b] \to \fH$ joining $x$ to $y$ and
\[
\length_{cc}(\gamma) = \int_a^b g_0(\gamma'(s),\gamma'(s))^{1/2}_{\gamma(s)} \, ds.
\]
The metric $d_{cc}$ is left invariant and geodesic. Explicit formulas for the CC geodesics will appear in subsection \ref{subsec:tubular-neighborhoods}. For later purposes we also introduce the Riemannian metric $g_1$ for which $X_1$, $X_2$ and $X_3$ are an orthonormal frame. Note that the lengths of any horizontal curve in the $g_0$ and $g_1$ metrics coincide. The ball with center $x$ and radius $r>0$ in the CC metric will be denoted $B_{cc}(x,r)$.

\subsection{Perimeter and mean curvature in the Heisenberg group}\label{subsec:perim-and-mean-curvature}

Let $\Omega$ be a bounded domain in $\Heis$ with $C^1$ boundary. For any $s\in\pO$, we consider the tangent space $T_s(\pO)$ at $s$ that is spanned by the vectors tangent to $\pO$. We say that $s$ is a {\bf characteristic point} if $T_s(\pO)$ agrees with the horizontal space $\h_s\Heis$, otherwise $s$ is said to be a {\bf non-characteristic point}. Throughout this paper, we assume that $\pO$ contains no characteristic points. Such an assumption, while clearly restrictive, nevertheless allows for a number of examples. For instance, there are smoothly bounded noncharacteristic tori in $\Heis$, see for example \cite[Remark 6.4]{tys:gcdima-heisenberg}.

Let $\sigma$ be the surface area measure on $\pO$, and let $\vec{n}(s)$ be the outward pointing unit $g_1$-normal at $s \in \pO$.
Let $\vec{n}_h$ be the orthogonal projection of $\vec{n}$ into $\h_s\Heis$; note that $\vec{n}_h \ne 0$ if and only if $s$ is noncharacteristic.
The {\bf horizontal perimeter measure} $\sigma_0$ on $\pO$ is $d\sigma_0 = |\vec{n}_h| \, d\sigma$.
We denote by $N(s)$ the normalized projection of the inward unit $g_1$-normal at $s$, i.e.\ $N(s) = -(\vec{n}_h/|\vec{n}_h|)(s)$.
If $\vec{n} = n_1 X_1 + n_2 X_2 + n_3 X_3$ then
\begin{equation}\label{eq:N}
N(s) = \frac{-n_1X_1-n_2X_2}{|(n_1,n_2)|}.
\end{equation}

Since $s$ is noncharacteristic, the space $T_s(\pO)\cap\h_s\Heis$ is one-dimensional. We call it the {\bf horizontal tangent space} $\h T_s(\pO)$ of $\pO$ at $s$, and we denote by $T(s)$ a unit vector which spans $\h T_s(\pO)$. Specifically, if $N(s)$ is as in the previous paragraph then we choose
\begin{equation}\label{eq:T}
T(s) = \frac{-n_2X_1+n_1X_2}{|(n_1,n_2)|}.
\end{equation}
The pair
\begin{equation}\label{eq:TN}
\{N(s),T(s)\}
\end{equation}
forms an orthonormal basis of $\h_s\Heis$ with respect to the sub-Riemannian metric $g_0$.

The horizontal tangent vector field $T$ generates a foliation of $\pO$, the {\bf Legendrian foliation}. If $\alpha$ is a curve in the Legendrian foliation with $\alpha(0) = s \in \pO$, then $\alpha'(0) = T(s)$.

Assuming that $\pO$ is $C^2$, the {\bf horizontal mean curvature} of $\pO$ at a point $s$ is defined as the horizontal divergence of the horizontal unit normal:
$$
H_{\pO,0}(s) = \diver_H\left(\frac{\vec{n}_h(s)}{|\vec{n}_h(s)|}\right)
$$
where $\diver_H(aX_1+bX_2) = X_1(a)+X_2(b)$. It is known (see e.g.\ \cite[Proposition 4.24]{cdpt:survey}) that $H_{\pO,0}(s)$ coincides with the planar curvature of the projection of the Legendrian curve $\alpha$ in $\pO$ through $s$ into the $x_1x_2$-plane.

\subsection{Tubular neighborhoods of the boundary of a smooth domain}\label{subsec:tubular-neighborhoods}

Since we only care about the heat loss within a small time---which can be felt close to the boundary $\pO$---it is natural to consider a small inner tubular neighborhood of $\pO$. For $\ep>0$ define
\[
\Omega_\ep=\lbrace x\in \Omega \,|\, \min_{y\in \fH\setminus \Omega} d_{cc}(x,y)<\ep \rbrace \,.
\]
We describe the structure of such tubular neighborhoods in a sequence of geometric lemmas. A detailed discussion is in the recent preprint by Ritor\'e \cite{rit:tubular}, where proofs of several of these lemmas can be found.

\begin{lemma}\label{lemma-unique-p}
Assume that $\pO$ is compact and smooth, without characteristic points. Then there exists $\ep>0$ sufficiently small so that for any $x\in\Omega_\ep$, there exists a unique point $s\in\pO$ which is nearest to $x$ in the CC-metric. Furthermore, $x$ is joined to $s$ by a unique CC geodesic.
\end{lemma}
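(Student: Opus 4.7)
The plan is to construct a sub-Riemannian analogue of the normal exponential map at the boundary, show by the inverse function theorem and compactness that it produces a uniform tubular parametrization, and then invoke a first variation argument to isolate the nearest point and minimizing geodesic.

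First, for each $s\in\partial\Omega$, noncharacteristicity guarantees that the inward horizontal unit normal $N(s)$ from \eqref{eq:N} depends smoothly on $s$. The Riemannian outward unit normal $\vec n(s)$ determines a unique initial covector $p_0(s)\in T_s^*\fH$ that annihilates $T_s\partial\Omega$ and whose horizontal dual is $N(s)$. Let $\gamma_s:[0,\infty)\to\fH$ be the normal CC geodesic issued from $s$ with initial momentum $p_0(s)$, so that $\dot\gamma_s(0)=N(s)$ and the vertical momentum is dictated by the Riemannian normal. Smooth dependence of the Hamiltonian flow on initial data then makes $E(s,r):=\gamma_s(r)$ a smooth map $\partial\Omega\times[0,\infty)\to\fH$.

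Next I would verify that $E$ is a local diffeomorphism at $(s,0)$. Since $\partial_r E(s,0)=N(s)$ and $\partial_s E(s,0)$ restricts to the identity on $T_s\partial\Omega$, the image of $dE(s,0)$ equals $T_s\partial\Omega+\R\cdot N(s)$. This coincides with $T_s\fH$ precisely when $N(s)\notin T_s\partial\Omega$: were $N(s)\in T_s\partial\Omega$, then together with $T(s)$ the whole horizontal plane $\h_s\fH$ would lie in $T_s\partial\Omega$, making $s$ characteristic. Compactness of $\partial\Omega$ together with continuity of $dE$ then yields a uniform $\epsilon_0>0$ such that $E$ restricts to a diffeomorphism of $\partial\Omega\times[0,\epsilon_0)$ onto a neighborhood of $\partial\Omega$. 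Shrinking $\epsilon_0$ further ensures that each curve $r\mapsto E(s,r)$ is CC-minimizing (short normal CC geodesics are minimizing) and that $E(\partial\Omega\times(0,\epsilon_0))\cap\Omega=\Omega_{\epsilon_0}$.

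Finally, given $x\in\Omega_\epsilon$ with $\epsilon\leq\epsilon_0$, compactness of $\partial\Omega$ furnishes some $s^*\in\partial\Omega$ realizing $d_{cc}(x,\partial\Omega)=d_{cc}(x,s^*)=:\delta<\epsilon$. Any CC-minimizing geodesic $\alpha$ of length $\delta$ from $s^*$ to $x$ must emanate from $s^*$ sub-Riemannian normally to $\partial\Omega$: otherwise a first-variation perturbation of the initial endpoint along $\partial\Omega$ would produce a strictly shorter horizontal path from the boundary to $x$. Hence $\alpha=\gamma_{s^*}|_{[0,\delta]}$ and $x=E(s^*,\delta)$, and injectivity of $E$ forces $s^*$ and $\alpha$ to be unique.

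The principal obstacle is the first variation step, since arbitrary (non-horizontal) perturbations of the boundary endpoint cannot be matched by purely horizontal curve perturbations. The way around this is to work in the Hamiltonian formalism: a CC-minimizer from a submanifold carries an initial covector that must annihilate the submanifold's tangent space, which is exactly the defining property used to construct $\gamma_s$, so this identification is precisely what forces $\alpha=\gamma_{s^*}$ and allows uniqueness to be read off from the diffeomorphism property of $E$.
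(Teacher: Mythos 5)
Your proposal is correct in outline, but it takes a genuinely different route from the paper: the paper's proof is essentially a citation, reducing the statement to showing $\mathrm{reach}(\pO)>0$ and then invoking Ritor\'e's Theorems 4.2 and 4.5 (for positive reach) and Remark 3.5/Section 4 (for uniqueness of the geodesic). You instead reconstruct the underlying argument directly: you build the sub-Riemannian normal exponential map $E(s,r)$ from the transversality covector at the boundary, check via noncharacteristicity that $dE(s,0)$ is an isomorphism (your observation that $N(s)\in T_s\pO$ would force $\h_s\Heis\subseteq T_s\pO$ is exactly the right use of the hypothesis), upgrade to a uniform collar by compactness, and then identify every distance-minimizer from $\pO$ with a normal geodesic via the Pontryagin transversality condition. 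What your approach buys is a self-contained proof and an explicit identification of the foliating geodesics with the curves $\gamma^\lambda_{s,N(s)}$ of Lemma \ref{lemma-h-normal-geodesic}; what the paper's approach buys is brevity and access to Ritor\'e's sharper quantitative control of the reach, which is reused later (Lemmas \ref{lemma-h-normal-geodesic} and \ref{lemma-J} all come from the same source).

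Two steps that you leave implicit deserve a sentence each in a complete write-up. First, for the transversality condition to \emph{pin down} the minimizer you must rule out strictly abnormal minimizers from $\pO$ to $x$; in $\Heis^1$ this is automatic (the horizontal distribution is contact, so the only abnormal curves are constant), but it should be said, since in a general sub-Riemannian manifold the covector of an abnormal minimizer also annihilates $T_{s^*}\pO$ without determining the curve. Second, ``local diffeomorphism at each $(s,0)$ plus compactness implies $E$ is injective on a uniform collar $\pO\times[0,\epsilon_0)$'' requires the standard sequential-compactness argument (two sequences $(s_n,r_n)\ne(s_n',r_n')$ with equal images and $r_n,r_n'\to0$ would converge to a common boundary point, contradicting local injectivity there); continuity of $dE$ alone does not give global injectivity. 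Neither point is a gap in the sense of an unfixable error, but both are where the actual work of the cited theorems lives.
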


\begin{proof}
Let $\mathrm{Unp(E)}$ be the set of points $x\in\fH$ for which there is a unique point of $E$ nearest to $x$.
For $s\in E$, define $\mathrm{reach}(E,s)$ as the supremum of those values $r>0$ for which $B(s,r)\subset\mathrm{Unp}(E)$. Let
\[
\mathrm{reach}(E):=\inf\{\mathrm{reach}(E,s)\,|\,s\in E\}.
\]
Then we just need to show that $\mathrm{reach}(\pO)>0$. This is proved in \cite[Theorem 4.2 and Theorem 4.5]{rit:tubular}. The uniqueness of the CC geodesic between $x$ and $s$ follows from \cite[Remark 3.5 and Section 4]{rit:tubular}.
\end{proof}

Let $x \in \Heis$. Each CC geodesic emanating from $x$ is contained in a maximal CC geodesic $\gamma_{x,v}^\lambda$ for some $v \in \h_x \Heis$, $|v|=1$, and some $\lambda \in \R$. Here $x = (\gamma_{x,v}^\lambda)(0)$ is the initial position, $v = (\gamma_{x,v}^\lambda)'(0)$ is the initial velocity vector and the parameter $\lambda$ is known as the {\bf curvature}. The explicit form of these geodesics is well known, cf.\ Section 2.2 in \cite{rit:tubular}.  If $v = \cos\theta X_1(x) + \sin\theta X_2(x)$ then
\begin{equation}\label{CCgeodesics}
\gamma_{x,v}^\lambda(t) = x*\left(\cos\theta\frac{\sin(\lambda t)}{\lambda} + \sin\theta \frac{1-\cos(\lambda t)}{\lambda}, -\cos\theta \frac{1-\cos(\lambda t)}{\lambda} + \sin\theta \frac{\sin(\lambda t)}{\lambda}, - \frac{\lambda t - \sin(\lambda t)}{\lambda^2}\right).
\end{equation}
The maximal CC geodesic $\gamma_{x,v}^\lambda$ is defined on the interval $(-2\pi/|\lambda|,2\pi/|\lambda|)$ (or on all of $\R$ if $\lambda = 0$). Its projection to the $x_1x_2$-plane is a circle of radius $1/|\lambda|$ if $\lambda \ne 0$, or is a line if $\lambda = 0$. The velocity vector at time $t$ is
$$
\dot\gamma_{x,v}^\lambda(t) = \cos(\theta - \lambda t)X_1(\gamma_{x,v}^\lambda(t)) + \sin(\theta - \lambda t)X_2(\gamma_{x,v}^\lambda(t)).
$$

The following lemma is Theorem 3.11 in \cite{rit:tubular}.

\begin{lemma}\label{lemma-h-normal-geodesic}
Assume $\pO$ is compact and $C^1$ smooth, without characteristic points. Fix $s\in \partial \Omega$ and $x\in\Omega$ such that $d_{cc}(x, \partial \Omega)=d_{cc}(x,s):=r$ and let $\gamma: [0,r]\to \fH$ be the minimizing CC-geodesic connecting $x$ and $s$ such that $\gamma(0)=x$ and $\gamma(r)=s$. Then
$-\dot{\gamma}(0)=N(s)$ where $N(s)$ is the inward horizontal unit normal vector to $\pO$ at $s$, see \eqref{eq:N}, and the curvature of $\gamma$ is $\lambda = 2g_1(\vec{n},X_3)/ |\vec{n}_h|$.

Moreover, for any $0\le t\le r$, $-\dot{\gamma}(t)=N(\gamma(t))$ is the inward horizontal unit normal vector to $\partial \Omega_t$, where
$$
\Omega_t = \{x\in \Omega \,|\, d_{cc}(x, \pO)<t\}\,.
$$
\end{lemma}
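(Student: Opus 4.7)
The plan is to regard $\gamma$ as the projection of a normal sub-Riemannian extremal carrying a costate $p(\cdot)\in T^*\fH$, and to extract the needed information from a transversality condition at the endpoint $s$. Since $\pO$ is $C^1$ and entirely noncharacteristic, no nontrivial abnormal minimizer can meet $\pO$, so $\gamma$ is a normal extremal of the form \eqref{CCgeodesics} for some initial angle $\theta$ and curvature $\lambda$, and the associated costate satisfies the Hamiltonian equations for $\tfrac12(\langle p,X_1\rangle^2+\langle p,X_2\rangle^2)$.

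The crux is the endpoint condition. Because $\gamma$ realizes $\min_{y\in\pO}d_{cc}(x,y)$, a first-variation argument allowing the endpoint to slide along $\pO$ forces $p(r)$ to annihilate $T_s(\pO)$. Since $\pO$ is a hypersurface this determines $p(r)$ up to scale: $p(r)=\mu\,\vec{n}^\flat$, where $\vec{n}^\flat$ is the $g_1$-dual of the outward unit normal $\vec{n}$. Evaluating on $X_1$ and $X_2$ and imposing unit horizontal speed fix $|\mu|=1/|\vec{n}_h|$; the sign is pinned down by the fact that $\gamma$ exits $\Omega$ at $s$, so the horizontal velocity at $s$ is $\vec{n}_h/|\vec{n}_h|=-N(s)$, producing the first claimed identity (read via the left-invariant frame identification of horizontal tangent spaces at different points). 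For the curvature, the vertical component $\langle p(t),X_3\rangle$ is conserved along the flow (the Hamiltonian is independent of $x_3$) and, from \eqref{CCgeodesics}, is proportional to $\lambda$; substituting $\langle p(r),X_3\rangle = g_1(\vec{n},X_3)/|\vec{n}_h|$ and fixing the sign gives $\lambda = 2g_1(\vec{n},X_3)/|\vec{n}_h|$.

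For the propagation statement I would invoke the principle of optimality: the triangle inequality forces $d_{cc}(\gamma(t),\pO)=r-t$ for every $t\in[0,r]$, and Lemma \ref{lemma-unique-p} guarantees that $s$ remains the unique nearest boundary point to $\gamma(t)$ while $\gamma|_{[t,r]}$ remains the unique minimizer. Applying the preceding transversality argument with $\gamma(t)$ in place of $x$ then yields $-\dot\gamma(t)=N(\gamma(t))$ as the inward horizontal unit normal to the CC-distance level surface through $\gamma(t)$, which is precisely the relevant component of $\partial\Omega_t$.

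The main obstacle is making the first-variation step rigorous in the sub-Riemannian setting. One must invoke a version of Pontryagin's Maximum Principle, or carry out a direct endpoint variation using the explicit parametrization \eqref{CCgeodesics}; verify the absence of abnormal minimizers at noncharacteristic targets in $\fH$; and then track sign conventions carefully, since subtle confusions between the Hamiltonian momentum, the curvature parameter $\lambda$, and the orientation of $\vec{n}$ are easy to make. Once the transversality condition is secured, the horizontal-normal identity, the curvature formula, and the propagation assertion all follow by bookkeeping.
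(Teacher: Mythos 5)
The paper offers no proof of this lemma: it is quoted as Theorem 3.11 of Ritor\'e \cite{rit:tubular}, so there is no internal argument to compare against. Your PMP/transversality route is the standard way to establish the statement and is essentially what the cited reference does (there via a first variation of length using the explicit geodesics \eqref{CCgeodesics}). The endpoint analysis is correct: every minimizer in $\Heis$ is a normal extremal --- though note that the absence of nontrivial abnormal extremals in the first Heisenberg group is a property of the contact distribution itself and has nothing to do with $\pO$ being noncharacteristic; noncharacteristicity is needed instead so that the conormal $\vec{n}^\flat$ has nonzero horizontal part, making $\mu=1/|\vec{n}_h|$ finite and the transversal extremal a genuine unit-speed geodesic. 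Transversality then gives $p(r)=\mu\,\vec{n}^\flat$, and since $[X_1,X_2]=2X_3$ the Hamiltonian system yields $\lambda=2\langle p,X_3\rangle$, hence $\lambda=2g_1(\vec{n},X_3)/|\vec{n}_h|$ once $\mu>0$ is fixed by the exit direction. (The printed identity $-\dot\gamma(0)=N(s)$ should really be read at $t=r$, since the velocity coefficients rotate by $\lambda r$ along the geodesic; your reading via \eqref{eq:N} is the intended one.)

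The one step that does not follow as you state it is the propagation claim. Re-running the transversality argument with $\gamma(t)$ in place of $x$ only reproduces the condition on the costate at the far endpoint $s$; it says nothing directly about $-\dot\gamma(t)$ being the horizontal unit normal to the level set of $\rho(\cdot)=d_{cc}(\cdot,\pO)$ through $\gamma(t)$. For that you need two additional facts: that this level set is a $C^1$ hypersurface near $\gamma(t)$, and that $\rho$ is differentiable there with horizontal gradient equal to $-\dot\gamma(t)$ (the eikonal equation). Both hold inside the tubular neighborhood --- Lemma \ref{lemma-unique-p} gives uniqueness of foot points and of minimizing geodesics, and the exponential parametrization of the geodesics depends smoothly on $(s,\lambda)$, so the level sets are graphs over $\pO$ --- but this is a genuine extra argument your sketch omits. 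There is also a harmless indexing slip already present in the statement itself: $\gamma(t)$ lies on $\partial\Omega_{r-t}$, not $\partial\Omega_t$, and ``inward'' must be understood as pointing away from $\pO$, consistently with the convention of \eqref{eq:N}.
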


In view of the previous lemmas, we observe a foliated structure of $\Omega_\ep$ induced by the Carnot--Carath\'eodory distance to $\pO$. As in \eqref{eq:TN} we obtain a $g_1$-orthonormal frame $\{N,T,X_3\}$ defined along $\gamma$. We extend this to a smooth frame $\{N,T,Z\}$ defined in a neighborhood of $\gamma$.

\begin{lemma}
Let $\ep$ be as in Lemma \ref{lemma-unique-p}. For $x\in\Omega_\ep$, assume $d_{cc}(x, \partial \Omega)=r<\ep$, and let $\gamma$ be the unique geodesic connecting $x$ to $s \in \pO$ with $\gamma(0) = x$ and $\gamma(r) = s$. Then the frame $N,T,Z$ along the geodesic $\gamma$ admits a smooth extension as follows:
\begin{equation}\label{eq-N-T-ext}\begin{split}
&N(q)=-\bigg(\cos\theta+\lambda(q_2-x_2)\bigg)X_1(q)-\bigg(\sin\theta-\lambda(q_1-x_1)\bigg)X_2(q),\\
&T(q)=-\bigg(\sin\theta-\lambda(q_1-x_1)\bigg)X_1(q)+\bigg(\cos\theta+\lambda(q_2-x_2)\bigg)X_2(q),\\
&Z(q)=f(q)X_3(q),
\end{split}\end{equation}
where $\lambda$ is the curvature of $\gamma$ and
\begin{equation}\label{eq-N-T-f}
f(q)=(\cos\theta+\lambda(q_2-x_2))^2+(\sin\theta-\lambda(q_1-x_1))^2.
\end{equation}
Moreover,
\begin{align}\label{eq-N-T-brackets-1}
[N, T]=-2Z,\  [N,Z]=0,\ [T,Z]=-2\lambda Z
\end{align}
and, for $k$-fold iterated brackets,
\begin{equation}\label{eq-N-T-brackets-2}
[T, [T,\cdots[T,N]]]=(-1)^{k-1} 2^k \lambda^{k-1}Z, \quad [T, [T,\cdots[T,Z]]]=(-2\lambda)^{k}Z.
\end{equation}
\end{lemma}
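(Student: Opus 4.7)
My plan is to verify the lemma by direct computation in the exponential coordinates on $\fH \cong \R^3$. The work splits into two phases: first check that the right-hand sides of \eqref{eq-N-T-ext} define smooth vector fields whose restrictions to $\gamma$ reproduce the frame $\{N,T,Z\}$ introduced in the preceding paragraph; then obtain the bracket identities \eqref{eq-N-T-brackets-1} and \eqref{eq-N-T-brackets-2} by expanding the Lie brackets in coordinates.

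For the extension phase, I would substitute the explicit geodesic \eqref{CCgeodesics} into the quantities $A := \cos\theta + \lambda(q_2 - x_2)$ and $B := \sin\theta - \lambda(q_1 - x_1)$. Using the Heisenberg group law, $\gamma(t)_j - x_j$ for $j=1,2$ become explicit linear combinations of $\sin(\lambda t)$ and $1 - \cos(\lambda t)$, and the angle-addition identities collapse these to
\[
A(\gamma(t)) = \cos(\theta - \lambda t), \qquad B(\gamma(t)) = \sin(\theta - \lambda t),
\]
so that $f(\gamma(t)) = A^2 + B^2 \equiv 1$ along $\gamma$. Combined with the formula for $\dot\gamma(t)$ stated just after \eqref{CCgeodesics} and with Lemma \ref{lemma-h-normal-geodesic}, this shows that $N(\gamma(t))$, $T(\gamma(t))$, $Z(\gamma(t))$ agree, respectively, with the inward horizontal unit normal to the level surface of $d_{cc}(\,\cdot\,,\pO)$ through $\gamma(t)$, its horizontal $g_0$-orthogonal complement in $\h_{\gamma(t)}\fH$, and $X_3$.

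For the brackets, the essential input is that $A$ and $B$ are affine in $q_1,q_2$ and independent of $q_3$, giving the pointwise identities $X_1(A)=0$, $X_1(B)=-\lambda$, $X_2(A)=\lambda$, $X_2(B)=0$, $X_3(A)=X_3(B)=0$, and consequently $X_1(f) = -2\lambda B$, $X_2(f) = 2\lambda A$. With these in hand, each of $[N,T]$, $[N,Z]$, $[T,Z]$ expands via $[gU,hV] = gh[U,V] + gU(h)V - hV(g)U$, and the Heisenberg structure constants $[X_1,X_2]=2X_3$, $[X_i,X_3]=0$ make the surviving terms collect into \eqref{eq-N-T-brackets-1}; in particular, the vanishing $[N,Z]=0$ comes from the cancellation $AX_1(f)+BX_2(f) = -2\lambda AB + 2\lambda AB = 0$. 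The iterated formulas \eqref{eq-N-T-brackets-2} then follow by a short induction on $k$, since $Z$ is an eigenvector of $\mathrm{ad}_T$ and $\mathrm{ad}_T(N)$ is already a constant multiple of $Z$ by the base case.

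I expect the main obstacle to be the bookkeeping in the extension phase: unwinding the noncommutative group law inside \eqref{CCgeodesics} to reach the key trigonometric identity above is the most error-prone step. The bracket calculations themselves, though they require several cancellations, become mechanical once the building-block derivatives of $A$, $B$, and $f$ are established.
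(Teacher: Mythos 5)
Your plan follows the same route as the paper's own (very terse) proof: restrict \eqref{eq-N-T-ext} to $\gamma$ using \eqref{CCgeodesics} and check it reproduces the frame, then verify the brackets by direct expansion. Your key computations are right: the group law adds the first two coordinates, so $A(\gamma(t))=\cos(\theta-\lambda t)$ and $B(\gamma(t))=\sin(\theta-\lambda t)$ by angle addition, whence $N(\gamma(t))=-\dot\gamma(t)$ and $f\equiv 1$ along $\gamma$; and the derivative table $X_1(A)=0$, $X_1(B)=-\lambda$, $X_2(A)=\lambda$, $X_2(B)=0$, $X_1(f)=-2\lambda B$, $X_2(f)=2\lambda A$ is correct, as is the cancellation giving $[N,Z]=0$ and the computation $[N,T]=-2(A^2+B^2)X_3=-2Z$.

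The one point you must not gloss over is your assertion that the surviving terms "collect into \eqref{eq-N-T-brackets-1}": with your own (correct) building blocks they do not, for the third bracket. Since $[X_i,X_3]=0$ and $X_3(f)=0$, one has $[T,Z]=T(f)\,X_3$ with $T(f)=-B\,X_1(f)+A\,X_2(f)=2\lambda(A^2+B^2)=2\lambda f$, so $[T,Z]=+2\lambda Z$ rather than $-2\lambda Z$; correspondingly the induction gives $\mathrm{ad}_T^k(N)=2^k\lambda^{k-1}Z$ and $\mathrm{ad}_T^k(Z)=(2\lambda)^k Z$ instead of \eqref{eq-N-T-brackets-2}. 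This is a sign slip in the lemma as stated, not a flaw in your method, and it is harmless for the rest of the paper (only the fact that all iterated brackets are multiples of $Z$ of the indicated order feeds into the Strichartz/Azencott expansion \eqref{eq-sub-BM}). But as written your proposal would either silently reproduce the wrong sign or leave a contradiction unexplained; carry the computation through and state the corrected signs explicitly.
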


\begin{proof}
The $g_1$-orthonormal frame $\{N,T,X_3\}$ along $\gamma$ is given by
$$
N(\gamma(t)) = -\cos(\theta - \lambda t) X_1(\gamma(t)) - \sin(\theta - \lambda t) X_2(\gamma(t))
$$
and
$$
T(\gamma(t)) = -\sin(\theta - \lambda t) X_1(\gamma(t)) + \cos(\theta - \lambda t) X_2(\gamma(t))
$$
The fact that the expressions in \eqref{eq-N-T-ext} define an extension of this frame follow from the formula \eqref{CCgeodesics} for the geodesic $\gamma = \gamma_{s,v}^\lambda$. Verification of the bracket identities \eqref{eq-N-T-brackets-1} and \eqref{eq-N-T-brackets-2} is a simple exercise, left to the reader.
\end{proof}

We next define a parametrization $\varphi_x$ of a neighborhood $\mathcal{O}_x$ of $\gamma$ by a neighborhood $D$ of the origin in $\mathbb{R}^3$. For $(\xi, y,z)\in \mathbb{R}^3$, we let
\begin{equation}\label{eq-cartesian}
\varphi_x(\xi,y,z) = \exp_x(-\xi N + y T + z Z),
\end{equation}
that is,
\begin{equation*}
\varphi_{x}(\xi,y,z):=c(1),
\end{equation*}
where $c(t) = (c_1(t),c_2(t),c_3(t))$ solves the differential equation
\begin{equation}\label{first-order-linear-system}
\dot{c}(t) = -\xi \, N(c(t)) + y \, T(c(t)) + z \, Z(c(t)), \qquad c(0) = x.
\end{equation}
We have introduced an additional minus sign in front of the coefficient of $N$ in \eqref{eq-cartesian} so that increasing values of the parameter variable correspond to motion from $x$ towards the boundary of $\Omega$; recall that $N$ is the inward pointing normal.

The first-order linear system \eqref{first-order-linear-system} can be solved explicitly. In Euclidean coordinates, it reads
\begin{align*}
& \dot{c}_1(t) = (\xi\cos\theta - y \sin\theta) + \lambda(\xi(c_2-x_2) + y(c_1-x_1)) \\
& \dot{c}_2(t) = (\xi\sin\theta + y \cos\theta) + \lambda(-\xi(c_1-x_1) + y(c_2-x_2)) \\
& \dot{c}_3(t) = z \, f(c(t)) + ((\xi\sin\theta + y \cos\theta) + \lambda(-\xi(c_1-x_1) + y(c_2-x_2)))c_1 \\
& \qquad \qquad - ((\xi\cos\theta - y \sin\theta) + \lambda(\xi(c_2-x_2) + y(c_1-x_1)))c_2 \,.
\end{align*}
The equations for $c_1(t)$ and $c_2(t)$ have solution
\begin{align*}
& c_1(t) = x_1 - \frac1\lambda(e^{\lambda t y}\cos(\lambda t \xi)-1)\sin\theta + \frac1\lambda e^{\lambda t y} \sin(\lambda t \xi) \, \cos\theta \\
& c_2(t) = x_2 + \frac1\lambda e^{\lambda t y} \sin(\lambda t \xi)\sin\theta + \frac1\lambda  (e^{\lambda t y}\cos(\lambda t \xi)-1) \, \cos\theta \,.
\end{align*}
Then $f(c(t)) = e^{2\lambda t y}$ and the equation for $c_3(t)$ has solution
\begin{align*}
& c_3(t) = x_3 + \frac1{2\lambda^2 y} (e^{2\lambda t y}-1) (-\xi + \lambda z) + \frac1{\lambda^2} e^{\lambda t y}\sin(\lambda t \xi) \\
& \qquad \quad + \frac1{\lambda} \left(x_1 \, (e^{\lambda t y} \cos(\theta - \lambda t \xi) - \cos\theta) + x_2 \, (e^{\lambda t y} \sin(\theta - \lambda t \xi) - \sin\theta ) \right).
\end{align*}
Hence
\begin{equation*}\begin{split}
\varphi_x(\xi,y,z)
&= \left( x_1 - \frac1\lambda ( e^{\lambda y} \sin(\theta - \lambda \xi) - \sin\theta), x_2 + \frac1\lambda (e^{\lambda y} \cos(\theta - \lambda \xi) - \cos\theta), \right. \\
& \qquad \quad x_3 + \frac1{2\lambda^2 y} (e^{2\lambda y}-1)(-\xi+\lambda z) + \frac1{\lambda^2} e^{\lambda y}\sin(\lambda \xi) \\
& \qquad \qquad + \frac1\lambda \left( x_1 (e^{\lambda y} \cos(\theta - \lambda \xi) - \cos\theta) + x_2 ( e^{\lambda y} \sin(\theta - \lambda \xi) - \sin\theta) \right) \,.
\end{split}\end{equation*}
The Jacobian of $\varphi_x$ is
$$
\det d\varphi_x(\xi,y,z) = \frac1{2\lambda y} e^{2\lambda y} ( e^{2\lambda y} - 1)
$$
which is always positive, hence $\varphi_x$ is locally invertible. Moreover, expressing the first two components of $\varphi_x$ in complex notation yields the map
$$
\xi + \bi y \mapsto (x_1+\bi x_2) + \frac1\lambda \bi e^{\bi \theta} \left( e^{-\bi \lambda (\xi + \bi y)} - 1 \right)
$$
which is invertible on a domain in $\mathbb{C}$ containing the interval $[0,r]$ if $r<\frac{2\pi}{|\lambda|}$. Hence $\varphi_x$ is invertible on a domain $D$ containing the interval $\{(\xi,0,0):0\le \xi\le r\}$, $\varphi_x(D) = \mathcal{O}_x$ is a domain in $\Heis$ containing the geodesic $\gamma$, $\varphi_x(0,0,0) = x$ and $\varphi_x(r,0,0) = s$. Using the group law we can verify that $x^{-1} * \varphi_x(\xi,y,z)$ is equal to
$$
\left( - \frac{e^{\lambda y}\sin(\theta-\lambda\xi) - \sin\theta}{\lambda} , \frac{e^{\lambda y}\cos(\theta-\lambda\xi)-\cos\theta}{\lambda},  \frac{e^{2\lambda y} - 1}{2\lambda^2 y}(-\xi + \lambda z) + \frac1{\lambda^2}e^{\lambda y}\sin(\lambda \xi) \right)\,.
$$
The inverse $\varphi^{-1}_x(\cdot)$ defines a Cartesian coordinate system in $\mathcal{O}_x$. Given $q\in \mathcal{O}_x$, we introduce the function
$$
\|\varphi^{-1}_x(q)\|:=\sqrt{|\xi(q)|^2+|y(q)|^2+|z(q)|}.
$$

\begin{lemma}\label{lemma-equi-dist}
The function $q \mapsto \|\varphi_x^{-1}(q)\|$ is comparable to the CC distance $d_{cc}(q,x)$ in the following sense: there exists a constant $K$ so that for all $q \in \mathcal{O}_x$, $K^{-1} ||\varphi^{-1}_x(q)|| \le d_{cc}(q,x) \le K ||\varphi^{-1}_x(q)||$.
\end{lemma}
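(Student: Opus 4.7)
The plan is to reduce the claim to the classical bi-Lipschitz equivalence on $\Heis$ between the Carnot--Carath\'eodory distance and the standard Heisenberg gauge, then to carry out a direct comparison using the explicit coordinate formula already derived just above the lemma. By left-invariance, $d_{cc}(q,x) = d_{cc}(0,p)$, where $p := x^{-1}*\varphi_x(\xi,y,z)$ is the point whose Heisenberg coordinates are displayed immediately before the statement. Since $d_{cc}(0,p)$ is comparable to the homogeneous gauge $|p|_H := \sqrt{p_1^2+p_2^2}+\sqrt{|p_3|}$ (see, e.g., \cite{cdpt:survey}), the task reduces to showing $|p|_H \sim \sqrt{\xi^2+y^2+|z|}$ uniformly on $\mathcal{O}_x$.

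For the horizontal part, the displayed formula for $p$ can be rewritten in complex notation as $p_1+\bi p_2 = (\bi e^{\bi\theta}/\lambda)(e^{\lambda(y-\bi\xi)}-1)$, whence $\sqrt{p_1^2+p_2^2} = |e^{\lambda(y-\bi\xi)}-1|/|\lambda|$. The choice of $D$ that makes the horizontal part of $\varphi_x$ invertible restricts $\lambda(y-\bi\xi)$ to a bounded region of $\mathbb{C}$ on which the holomorphic function $w \mapsto (e^w-1)/w$ is nonvanishing; consequently $|e^w-1|$ is comparable to $|w|$ uniformly there, giving $\sqrt{p_1^2+p_2^2} \sim \sqrt{\xi^2+y^2}$. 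For the vertical part, a Taylor expansion of the two summands comprising $p_3$ shows that the terms $\pm \xi/\lambda$ cancel and the $\xi,y$-independent contributions collapse to $z$, leaving $p_3 = z + R$ with $R = \lambda yz - \tfrac{\lambda}{6}(\xi y^2+\xi^3) + \cdots$. Every term in $R$ has Heisenberg weight at least three when one assigns $\xi,y$ weight $1$ and $z$ weight $2$, so after shrinking $D$ enough that $|\lambda y|\le 1/2$ one obtains $|R| \le \tfrac12 |z| + C(\xi^2+y^2)$, which implies $|p_3| \sim |z|$ up to an additive $O(\xi^2+y^2)$ error. Adding this to the horizontal estimate yields $|p|_H \sim \sqrt{\xi^2+y^2}+\sqrt{|z|} \sim \sqrt{\xi^2+y^2+|z|}$, as desired.

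The main obstacle is establishing uniformity of the constant $K$. The phase $\theta$ drops out of the comparison, and $\lambda = 2g_1(\vec n,X_3)/|\vec n_h|$ is continuous on the compact noncharacteristic boundary $\pO$ by Lemma \ref{lemma-h-normal-geodesic}, hence bounded uniformly as $x$ ranges over $\Omega_\ep$; choosing $D$ in an $x$-uniform manner (small enough that both the nonvanishing of $(e^w-1)/w$ and the inequality $|\lambda y|\le 1/2$ hold) then makes all the implicit constants above independent of $x$, producing a single $K$ as required.
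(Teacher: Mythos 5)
Your proof is correct and follows essentially the same route as the paper: both reduce the claim to comparing a homogeneous gauge of $x^{-1}*\varphi_x(\xi,y,z)$ with $\sqrt{\xi^2+y^2+|z|}$ and then Taylor-expand the explicit coordinate formula for that point. Your split into horizontal and vertical parts (the non-vanishing of $w\mapsto(e^w-1)/w$ handling the former, and the explicit cancellation of the weight-$\le 2$ terms handling the latter) is a somewhat more transparent organization of the paper's single expansion of the combined function $G$, and your closing remark on the uniformity of $K$ in $x$ addresses a point the paper leaves implicit.
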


\begin{proof}
The Kor\'anyi norm $|(y_1,y_2,y_3)|_H := ((|y_1|^2+|y_2|^2)^2+4y_3^2)^{1/4}$ defines a left invariant metric on $\Heis$ which is comparable to the CC metric. We will show that $|x^{-1}*q|_H$ is comparable to $||\varphi^{-1}_x(q)||$. It suffices to prove that
$$
(\xi^2+y^2)^2+4z^2
$$
is comparable to
\begin{equation}\label{comparison-quantity}\begin{split}
&\left( \left| \frac{e^{\lambda y}\sin(\theta-\lambda\xi) - \sin\theta}{\lambda} \right|^2 + \left| \frac{e^{\lambda y}\cos(\theta-\lambda\xi)-\cos\theta}{\lambda} \right|^2 \right)^2 \\
& \quad + 4 \left(  \frac{e^{2\lambda y} - 1}{2\lambda^2 y}(-\xi + \lambda z) + \frac1{\lambda^2}e^{\lambda y}\sin(\lambda \xi) \right)^2
\end{split}\end{equation}
when $(\xi,y,z)$ lies in a bounded region of $\R^3$. After some algebraic manipulation we rewrite \eqref{comparison-quantity} in the form
$$
\frac{4e^{2\lambda y}}{\lambda^4} \left( (\cosh(\lambda y)-\cos(\lambda\xi))^2 + \left( \frac{\sinh(\lambda y)}{y} (-\xi+\lambda z) + \sin(\lambda \xi) \right)^2 \right)\,.
$$
Let us denote the expression in the previous line by $G(\xi,y,z)$. The function $G$ is real analytic in all of $\R^3$. It is elementary but tedious to verify that
$$
\partial^{\alpha_1}_\xi \partial^{\alpha_2}_y \partial^{\alpha_3}_z G(0,0,0) = 0
$$
for all multi-indices $(\alpha_1,\alpha_2,\alpha_3)$ with $\alpha_1+\alpha_2+2\alpha_3 \le 3$, and $\partial_\xi^4 G(0,0,0) = \partial_y^4 G(0,0,0) = 24$, $\partial_\xi^2 \partial_y^2 G(0,0,0) = 8$, and $\partial_z^2 G(0,0,0)= 8$. By Taylor's theorem with remainder,
$$
G(\xi,y,z) = (\xi^4+2\xi^2 y^2 + y^4 + 4z^2)(1+o(1)) = ((\xi^2+y^2)^2+4z^4)(1+o(1))
$$
and so the desired comparison holds on bounded regions of $\R^3$.
\end{proof}

Throughout this paper, we often use the function $||\varphi_x^{-1}(\cdot)||$ in explicit computations.
The localized boundary $\varphi_{x}^{-1}(\partial \Omega\cap \mathcal{O}_x)$ has the representation
\begin{equation}\label{eq-NTZ-cor}
-\xi=h(y, z;s)-r,
\end{equation}
where $h(\cdot,\cdot;s):\R^2\to\R$ is smooth. Moreover, $h(y,z;s)$, for $s\in \pO$, satisfies the following expansion.

\begin{lemma}\label{lemma-h-H}
Let $\Omega$ and $\Omega_\ep$ be as in Lemma \ref{lemma-unique-p}. For $s\in \partial \Omega$ and $x\in\Omega$ such that $d_{cc}(x,\pO)=d_{cc}(x,s)=\ep$, let $\varphi_x$ be as in \eqref{eq-cartesian}. Then there exists $0<\delta<\ep$ such that for all $|(y,z)|<\delta$, it holds that
\begin{equation}\label{eq-h-H}
\bigg|h(y,z;s)-H_{\pO,0}(s)/2y^2-k_1(s)z\bigg|\le \delta^{-2}(|y|^3+|yz|),
\end{equation}
for some continuous function $k_1(\cdot)$ on $\pO\cap B_{cc}(s, \delta)$.
\end{lemma}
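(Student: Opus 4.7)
The plan is to apply the implicit function theorem to obtain $h$, then to Taylor expand at $(0,0)$ and identify the low-order coefficients with geometric invariants of $\pO$. Since $s$ is noncharacteristic, the inward horizontal normal $N(s)$ is transverse to $T_s\pO$, and Lemma~\ref{lemma-h-normal-geodesic} gives $\partial_\xi\varphi_x|_{(r,0,0)}=-N(s)$. Consequently $\pO\cap\mathcal O_x$ is locally the graph $\xi=r-h(y,z;s)$ over a neighborhood of the origin in the $(y,z)$-plane; the function $h$ is of class $C^3$ (since $\pO$ is) with $h(0,0;s)=0$.

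For the first-order coefficients I would compare the graph-tangent vectors $\partial_y-\partial_y h(0,0)\,\partial_\xi$ and $\partial_z-\partial_z h(0,0)\,\partial_\xi$ with $T_s\pO$. Since $T(s)=\partial_y\varphi_x|_s$ is tangent to $\pO$, we obtain $\partial_y h(0,0;s)=0$ at once. The coefficient $\partial_z h(0,0;s)$ is determined by the $g_1$-orthogonality of the second independent tangent direction to $\vec n(s)$, using $\partial_z\varphi_x|_s=X_3(s)$; a short computation yields $\partial_z h(0,0;s)=n_3(s)/|\vec n_h(s)|$, which we take as the definition of $k_1(s)$. Continuity of $k_1$ on the noncharacteristic locus is clear from continuity of the Riemannian normal $\vec n$.

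The main identification is $\partial_y^2 h(0,0;s)=H_{\pO,0}(s)$. For this I would use the characterization recalled at the end of Subsection~\ref{subsec:perim-and-mean-curvature}: $H_{\pO,0}(s)$ equals the planar curvature at $s$ of the projection to the $x_1x_2$-plane of the Legendrian curve through $s$. This Legendrian is the integral curve of $T$ on $\pO$; in the graph representation restricted to $z=0$, it is to leading order in $y$ the curve $y\mapsto\varphi_x(r-h(y,0;s),\,y,\,0)$. Using the explicit formula for $\varphi_x$, one computes its $(x_1,x_2)$-projection, extracts the planar curvature at $y=0$, and matches it to $H_{\pO,0}(s)=\diver_H(-N)|_s$ to obtain the claimed identity. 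Once the second-order Taylor polynomial of $h$ is identified, the remainder estimate follows from the $C^3$-smoothness of $h$ together with bounds such as $|y^2z|,\,|yz^2|\le\delta|yz|$ on $|(y,z)|<\delta$ for the mixed cubic terms.

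The hardest step will be the second-order identification: translating between the graph $\xi=r-h(y,z;s)$ in the $(\xi,y,z)$-chart and the divergence definition of $H_{\pO,0}$ requires careful manipulation of the explicit (but cumbersome) form of $\varphi_x$ together with the bracket identities \eqref{eq-N-T-brackets-1}. A secondary subtlety is the treatment of the pure $z^k$-terms in $h(0,z;s)$ for $k\ge 2$, since these are not absorbed by $|y|^3+|yz|$ at $y=0$ in the naive way; they can be controlled using the special structure of the $y=0$ slice, namely $\varphi_x(\xi,0,z)=\gamma(\xi)*(0,0,z)$ (a vertical translate of $\gamma$, by $[N,Z]=0$), together with the $C^3$-smoothness of $\pO$ and a suitable choice of $\delta$.
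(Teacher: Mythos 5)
Your approach coincides with the paper's own proof, which likewise Taylor-expands $h(\cdot,\cdot;s)$ at the origin after observing that $d\varphi_x(\partial_y)=T(s)\in T_s(\pO)$ forces $h_y(0,0;s)=0$ and asserting $h_{yy}(0,0;s)=H_{\pO,0}(s)$; your proposal merely supplies details the paper leaves implicit (the identification $k_1(s)=n_3(s)/|\vec{n}_h(s)|$ and the Legendrian-projection computation behind $h_{yy}(0,0;s)=H_{\pO,0}(s)$). Your concern about the pure $z^2$ term is well founded, since the right-hand side of \eqref{eq-h-H} vanishes at $y=0$ and the paper's one-line proof does not address this either; note, however, that wherever the lemma is actually invoked the paper uses the weaker bound $\delta^{-2}|(y,z)|^3=\delta^{-2}(y^2+|z|)^{3/2}$, which does absorb the $z^2$ term because $|z|<\delta^2$ on the region considered.
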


\begin{proof}
The parametrization $\varphi_x$ induces a diffeomorphism $d\varphi_x: \R^3\to T_s\fH$. In particular we have $d\varphi_x(\partial_y)=T(s)$. Since $T(s)\in T_s(\pO)$, we have $h_y(0,0;s)=0$ and $h_{yy}(0,0;s)=H_{\pO,0}(s)$. Hence \eqref{eq-h-H}  follows immediately from the Taylor expansion of $h(\cdot,\cdot;s)$ at $(0,0)$.
\end{proof}

The next lemma provides a way to change coordinates for integration. For a proof, see section 5 in \cite{rit:tubular}, specifically (5.7) and (5.8).

\begin{lemma}\label{lemma-J}
Let $\Omega$ and $\ep>0$ be as above. Consider the parametrization $\Psi$ of $\Omega_\ep$ by $\pO \times (0,\ep)$ given by $x = \Psi(s,r)$, where $r = d_{cc}(x,\pO) = d_{cc}(x,s)$. Equip $\pO \times (0,\ep)$ with the product of the horizontal perimeter measure $\sigma_0$ and Lebesgue measure, and equip $\Omega_\ep$ with the volume measure. Then the Jacobian $J_\Psi$ of $\Psi$ satisfies the estimate
\begin{align}\label{eq-jacobi}
|J_\Psi(s,r)-1+H_{\pO,0}(s)r|\le K_1 r^2
\end{align}
for all $s \in \pO$ and all $r \in (0,\ep)$, for some fixed constant $K_1>0$.
\end{lemma}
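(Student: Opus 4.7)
The key observation is that, by Lemma \ref{lemma-h-normal-geodesic}, for each $s \in \partial\Omega$ the curve $r \mapsto \Psi(s, r)$ satisfies $\partial_r \Psi(s, r) = N(\Psi(s, r))$, where $N$ is extended from $\partial\Omega$ to $\Omega_\ep$ as the inward horizontal unit normal field to the parallel surfaces $\partial\Omega_t = \{d_{cc}(\cdot, \partial\Omega) = t\}$. Equivalently, $\Psi(\cdot, r)$ is the restriction to $\partial\Omega$ of the time-$r$ flow $\Phi^N_r$ of the horizontal vector field $N$ on $\Omega_\ep$. The plan is to compute $J_\Psi$ by exploiting this flow structure via the standard Euclidean Jacobian identity for flows. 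First, parametrize $\partial\Omega$ locally by a smooth chart $\psi : (u,v) \mapsto \psi(u,v)$; a short computation using $d\sigma_0 = |\vec{n}_h|\, d\sigma$, the identity $N = -\vec{n}_h/|\vec{n}_h|$, and the elementary formula $|V_1 \wedge V_2|_{g_1} = |\det(V_1, V_2, \vec{n})|$ valid for vectors $V_1, V_2$ spanning the tangent plane to $\partial\Omega$, yields $d\sigma_0(\psi(u,v)) = J_0(u,v)\, du\, dv$ with $J_0(u, v) = |\det(\partial_u \psi, \partial_v \psi, N(\psi(u, v)))|$ taken in the $g_1$-orthonormal frame $\{X_1, X_2, X_3\}$; since that frame has unit Euclidean Jacobian, this determinant coincides with the one taken in Euclidean coordinates.

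Setting $\tilde\Psi(u, v, r) := \Phi^N_r(\psi(u, v))$, the identity $d\Phi^N_r(x) \cdot N(x) = N(\Phi^N_r(x))$ (obtained by differentiating $\Phi^N_{r+s} = \Phi^N_r \circ \Phi^N_s$ at $s=0$) gives $\det d\tilde\Psi = \det(d\Phi^N_r(\psi)) \cdot \det(\partial_u \psi, \partial_v \psi, N(\psi))$, so $J_\Psi(s, r) = |\det d\tilde\Psi|/J_0 = \det d\Phi^N_r(\psi)$. Liouville's formula then yields $J_\Psi(s, r) = \exp \int_0^r \diver_{\R^3}(N)(\Psi(s, t))\, dt$. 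A direct calculation using $X_1 = \partial_1 - x_2 \partial_3$ and $X_2 = \partial_2 + x_1 \partial_3$ (which are individually Euclidean divergence-free) shows $\diver_{\R^3}(N) = X_1(N^1) + X_2(N^2) = \diver_H(N)$ for any horizontal field $N = N^1 X_1 + N^2 X_2$; combined with the definition $H_{\partial\Omega_t, 0} = \diver_H(\vec{n}_h/|\vec{n}_h|) = -\diver_H(N)$, this produces
\[
J_\Psi(s, r) = \exp\!\left(-\int_0^r H_{\partial\Omega_t, 0}(\Psi(s, t))\, dt\right).
\]

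Taylor expanding the integrand in $t$ and then the exponential in $r$ gives $J_\Psi(s, r) = 1 - H_{\pO, 0}(s)\, r + O(r^2)$; uniformity of the $O(r^2)$ remainder in $s \in \partial\Omega$ follows from the compactness of $\partial\Omega$ together with the $C^1$ regularity of $(s, t) \mapsto H_{\partial\Omega_t, 0}(\Psi(s, t))$ on $\partial\Omega \times [0, \ep]$. The main technical obstacle will be verifying that $N$ extends to a $C^2$ horizontal vector field on $\overline{\Omega_\ep}$ and that the identity $H_{\partial\Omega_t, 0} = -\diver_H(N)$ holds on each parallel surface; both depend on the $C^3$ hypothesis on $\partial\Omega$ and on the smoothness of the tubular parametrization $\Psi$, whose regularity is encoded in the explicit geodesic formula \eqref{CCgeodesics} and in the analysis of section 5 of \cite{rit:tubular}.
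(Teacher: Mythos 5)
Your argument is sound, but it is genuinely different from what the paper does: the paper gives no proof at all, simply citing the explicit Jacobian formulas (5.7)--(5.8) in Ritor\'e \cite{rit:tubular}, which are obtained by differentiating the closed-form geodesic parametrization \eqref{CCgeodesics} directly. Your route instead packages the computation conceptually: $\Psi(\cdot,r)$ is the time-$r$ flow of the inward horizontal unit normal field $N$ (which is correct, since by Lemma \ref{lemma-h-normal-geodesic} the reversed minimizing geodesic $t\mapsto\gamma(r-t)$ runs from $s$ to $x$ with velocity $N$ along every intermediate parallel surface), the factor $|\vec{n}_h|$ in $d\sigma_0$ exactly converts the Riemannian normal $\vec n$ into the horizontal normal $N$ in the base determinant, and Liouville's formula plus the identity $\diver_{\R^3}=\diver_H$ for horizontal fields turns the Jacobian into $\exp\bigl(-\int_0^r H_{\partial\Omega_t,0}\,dt\bigr)$. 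This is arguably the conceptual explanation of Ritor\'e's formula, and it generalizes more readily; what it costs you is precisely the two points you flag. First, the identification $\diver_H(-N)=H_{\partial\Omega_t,0}$ is not a footnote but the source of the first-order coefficient: the paper defines $H_{\pO,0}$ via an (implicit) extension of $\vec n_h/|\vec n_h|$ coming from a defining function of $\pO$, and you must check that the geodesic-flow extension $-N$ gives the same horizontal divergence on each $\partial\Omega_t$ --- this holds because $-N$ is the normalized horizontal gradient of the CC distance function, whose level sets are the $\partial\Omega_t$, and horizontal mean curvature at noncharacteristic points is independent of the defining function (cf.\ \cite[Prop.~4.24]{cdpt:survey}). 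Second, to upgrade the remainder from $o(r)$ to the stated $K_1r^2$ you need $t\mapsto H_{\partial\Omega_t,0}(\Psi(s,t))$ to be Lipschitz uniformly in $s$, which requires tracking that the $C^3$ hypothesis survives the construction of $N$ on $\Omega_\ep$; the explicit-formula route of \cite{rit:tubular} gets this for free, since there $J_\Psi$ is an analytic function of $r$ with coefficients depending continuously on $s$, and the expansion with a uniform quadratic error is immediate by compactness.
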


\begin{remark}
An explicit formula for the Jacobian $J_\Psi$ can be found in section 5 of \cite{rit:tubular}. For the purposes of our main result we only need the above first-order expansion in $r$.
\end{remark}

In the proof of Theorem \ref{thm-heat-content-prob} in subsection \ref{subsec:reduction2} we require information about the behavior of volume, horizontal perimeter, and total horizontal mean curvature for tubular neighborhoods and their boundaries in the $g_1$-metric.
The following lemma provides the necessary estimates. These estimates follow directly from the classical Steiner formula for volumes of tubular neighborhoods of submanifolds of Riemannian manifolds.

\begin{lemma}\label{lemma-compare-vol-peri-mean-curv}
Let $\Omega$ be a smoothly bounded domain in $\Heis$, and let $\Omega^r = \{x \in \Heis \, | \, d_{g_1}(x,\Omega) < r \}$ denote the $r$-neighborhood of $\Omega$ in the $g_1$-metric. Then
\begin{itemize}
\item[(1)] $\Vol(\Omega^r) = \Vol(\Omega) + O(r)$.
\item[(2)] $\int_{\pO^r} |\vec{n}_h| \, d\sigma = \int_{\pO} |\vec{n}_h| \, d\sigma + O(r)$.
\item[(3)] $\int_{\pO^r} H_{\pO^r,0} |\vec{n}_h| \, d\sigma = \int_{\pO} H_{\pO,0} |\vec{n}_h| \, d\sigma + O(r)$.
\end{itemize}
\end{lemma}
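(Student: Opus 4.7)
The plan is to parametrize a $g_1$-tubular neighborhood of $\pO$ by the Riemannian normal exponential map and reduce each estimate to smoothness of the parametrization together with compactness. Since $\pO$ is smooth and compact, the Riemannian tubular neighborhood theorem furnishes $r_0 > 0$ such that
$$
\Phi : \pO \times (-r_0, r_0) \to \fH, \qquad \Phi(s, \rho) = \exp^{g_1}_s(\rho \, \vec{n}(s)),
$$
where $\vec{n}$ is the outward $g_1$-unit normal to $\pO$, is a diffeomorphism onto a $g_1$-tubular neighborhood of $\pO$, with $\Phi(\cdot, \rho)$ restricting to a diffeomorphism from $\pO$ onto $\pO^\rho$ for each $0 \le \rho < r_0$. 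The volume Jacobian $J(s, \rho)$ of $\Phi$ (relative to $d\sigma \otimes d\rho$ on the source) and the surface Jacobian $J_\partial(s, \rho)$ of the slice map $s \mapsto \Phi(s, \rho)$ are both smooth and equal $1$ at $\rho = 0$; consequently both are $1 + O(\rho)$ uniformly on $\pO \times [0, r]$ for any $r < r_0$.

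For (1), Fubini immediately gives
$$
\Vol(\Omega^r \setminus \Omega) = \int_0^r \int_{\pO} J(s, \rho) \, d\sigma(s) \, d\rho,
$$
and uniform boundedness of $J$ on the compact parameter domain yields $\Vol(\Omega^r) - \Vol(\Omega) = O(r)$. For (2) and (3) the key point is that all the horizontal data on the parallel surface $\pO^\rho$ depends smoothly on $\rho$ down to $\rho = 0$. The outward $g_1$-unit normal $\vec{n}^\rho$ to $\pO^\rho$, pulled back by $\Phi$, is smooth in $(s, \rho)$ by the standard Riemannian geometry of tubes and agrees with $\vec{n}(s)$ at $\rho = 0$, so its horizontal projection $\vec{n}^\rho_h$ is smooth as well. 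Since $\pO$ is compact and completely noncharacteristic, $|\vec{n}_h|$ is bounded below by a positive constant on $\pO$, so by continuity $|\vec{n}^\rho_h|$ remains uniformly positive on $\pO \times [0, r]$ for $r$ sufficiently small. Hence $\vec{n}^\rho_h / |\vec{n}^\rho_h|$ and its horizontal divergence $H_{\pO^\rho, 0}(\Phi(s, \rho))$ are smooth functions of $(s, \rho)$.

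Pulling back along $s \mapsto \Phi(s, r)$ produces
\begin{align*}
\int_{\pO^r} |\vec{n}_h| \, d\sigma &= \int_{\pO} |\vec{n}^r_h|(\Phi(s, r)) \, J_\partial(s, r) \, d\sigma(s), \\
\int_{\pO^r} H_{\pO^r, 0} |\vec{n}_h| \, d\sigma &= \int_{\pO} H_{\pO^r, 0}(\Phi(s, r)) \, |\vec{n}^r_h|(\Phi(s, r)) \, J_\partial(s, r) \, d\sigma(s).
\end{align*}
By the smoothness just established, each factor in the right-hand integrands differs from its value at $r = 0$ by $O(r)$ uniformly in $s \in \pO$, so integration over the compact boundary produces the $O(r)$ estimates asserted in (2) and (3).

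The main obstacle is establishing the smooth and uniformly non-degenerate dependence of the horizontal normal $\vec{n}^\rho_h$ (and therefore of $H_{\pO^\rho, 0}$) on the normal parameter $\rho$ down to $\rho = 0$. Smoothness of $\vec{n}^\rho \circ \Phi$ in $(s, \rho)$ is a routine tube-of-hypersurface computation in the Riemannian metric $g_1$; the subtler ingredient is the uniform positive lower bound on $|\vec{n}^\rho_h|$, which requires combining the noncharacteristic hypothesis with compactness of $\pO$. Once that is in place, everything else is a first-order Taylor bound on a compact set.
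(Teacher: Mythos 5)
Your proposal is correct and follows essentially the same route as the paper: both parametrize the $g_1$-tubular neighborhood by the Riemannian normal exponential map, observe that the Jacobians and the horizontal data ($|\vec{n}^\rho_h|$ and $H_{\pO^\rho,0}$, smooth and nondegenerate thanks to compactness and the noncharacteristic hypothesis) depend smoothly on the normal parameter down to $\rho=0$, and conclude by a uniform first-order Taylor bound. The paper phrases this via the classical Steiner formula and an explicit expansion of the slice integrals in terms of the Riemannian mean curvature $H_{\pO,1}$, but that extra precision is not needed for the stated $O(r)$ estimates.
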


\begin{proof}
For sufficiently small $r>0$, the domain $\Omega^r \setminus \Omega$ is foliated by the surfaces $\pO^t$, $0<t<r$. Define functions $A$ and $B$ in $\Omega^r \setminus \Omega$ by $A(x) = |\vec{n}_h(x)|$ and $B(x) = H_{\pO^t,0}(x)$ for $x \in \pO^t$. Then $A$ and $B$ are smooth in $\Omega^r \setminus \Omega$.  Our starting point is the Steiner formula
$$
\Vol(\Omega^r) = \Vol(\Omega) + \int_0^r \sigma(\pO^t) \, dt,
$$
where $\sigma$ denotes the surface measure in the $g_1$ metric. For sufficiently small $r>0$, the domain $\Omega^r \setminus \Omega$ may be parameterized by $\pO \times (0,r)$ (analogously to the discussion in this section in the setting of the Carnot--Carath\'eodory metric) via a diffeomorphism $\Psi$, and for a smooth function $f:\Omega^r \setminus \Omega \to \R$,
$$
\int_{\pO^t} f(x) \, d\sigma(x) = \int_{\pO} f(\exp_s(t\vec{n}(s))) \, J_\Psi(s,t)\, d\sigma(s)\,, \qquad x = \Psi(s,t),
$$
cf.\ \cite[Lemma 3.12]{gray:tubes}. Expanding in a series in $t$ and using the analog of Lemma \ref{lemma-J} for the $g_1$ metric gives
$$
\int_{\pO^t} f \, d\sigma = \int_{\pO} (f + g_1(\nabla f,\vec{n})t + o(t)) (1+H_{\pO,1}\,t+o(t)) \, d\sigma\,
$$
where $H_{\pO,1}$ denotes the mean curvature in the $g_1$ metric. Thus
$$
\int_{\pO^t} f \, d\sigma = \int_{\pO} f \, d\sigma + \int_{\pO} \bigl( g_1(\nabla f,\vec{n}) + f\,H_{\pO,1}) \bigr) d\sigma \cdot t + o(t)
$$
Part (2) follows by choosing $f = A$ and part (3) by choosing $f = AB$, where $A$ and $B$ are as defined at the start of this proof. Finally, (1) follows from Steiner's formula above.
\end{proof}

\section{Probabilistic preliminaries}\label{sec:prelim2}

\subsection{First reduction: time change}\label{subsec:time-change}

The interpretation of the solution of a Dirichlet problem in terms of the exit time of the corresponding Markov process is well-known and has been widely used. Let ${x}_t$ be the strong Markov process generated by the horizontal sub-Laplacian $\frac12\triangle_0$ starting from $x\in\fH$. Then the solution $v(x,t)$ of the Dirichlet heat equation \eqref{eq:Dirichlet} yields the probability of surviving up to time $t$:
\[
v(x,t)=\p_x({T}_\Omega>t),
\]
where
\[
{T}_\Omega=\inf\{t>0, {x}_t\in \fH \setminus \Omega \}.
\]
Intuitively, the most likely event is that the Markov process escapes $\Omega$ in the direction of the outward horizontal normal $-N$ at the boundary $\pO$. It is more convenient for us to locally use the frame that is equipped with such information. For each $x\in\fH$, consider the new frame $\{N,T,Z\}$ as in \eqref{eq-N-T-ext}. In a small neighborhood $\mathcal{O}_x$, the horizontal sub-Laplacian can be written as
\begin{equation}\label{eq-delta-L}
\triangle_0 = \frac{1}{f}(N^2+T^2),
\end{equation}
where $f$ is as in \eqref{eq-N-T-f}. We write
$$
L=N^2+T^2.
$$
Let $\tilde{x}_t$ the Markov process generated by $L$ and starting from $x$. Then $\tilde{x}_t$ solves the Stratonovich differential equation
\begin{equation}\label{eq-SDE}
\begin{cases}
d\tilde{x}_t=-N(\tilde{x}_t)dB_t^N+T(\tilde{x}_t)dB_t^T \\
\tilde{x}_0=x
\end{cases}
\end{equation}
where $B_t^N$, $B_t^T$ are independent standard Brownian motions. By using the language of stochastic flows we can lift the process to the tangent space $T_x\fH$. Combining Strichartz's result (\cite{str:cbhd}, Theorem 3.2) with \eqref{eq-N-T-brackets-1} and \eqref{eq-N-T-brackets-2}, we deduce that
\begin{equation}\label{eq-sub-BM}
\tilde{x}_t=\exp_x\left( -B^N_tN+B^T_tT+ tR_tZ\right)
\end{equation}
where $R_t$ is a remainder term (process) which satisfies the following estimate: $\exists\, \alpha_0, c_0>0$ such that for any $R>c_0$,
\begin{equation}\label{eq-R-1-est}
\p\bigg(\sup_{0\le s\le t}|R_s|\ge R\bigg)\le \exp\bigg(-\frac{R^{\alpha_0}}{c_0t} \bigg)
\end{equation}
The derivation of \eqref{eq-R-1-est} is an easy consequence of the result of Azencott  \cite[p.\ 252]{aze:formule}, see also Castell \cite[p.\ 235]{cas:asymptotic}. Moreover, if we write
\[
\tilde{X}_t=\varphi_x^{-1}(\tilde{x}_t)=(-B^N_t, B^T_t, tR_t),
\]
then we have the following tail estimates. We remind the reader that $q \mapsto \|\varphi_x^{-1}(q)\|$ refers to the homogeneous distance considered in Lemma \ref{lemma-equi-dist}; this notation will be used repeatedly in what follows.

\begin{lemma}\label{lemma-X-s-est}
Let $\tilde{X}_t$ be given as above. Then the following estimates hold when $t$ is small enough.
\begin{itemize}
\item[(1)] For any $0<\alpha<1$, there exist $ c, C,  \alpha'>0$ such that
\begin{equation}\label{eq-X-s-est}
\p\bigg(\sup_{0\le s\le t}||\tilde{X}_s||^2> t^{1-\alpha}\bigg)\le C\exp\bigg(-\frac{c}{t^{\alpha'}} \bigg).
\end{equation}
\item[(2)] For any $\delta>0$, there exists $C>0$ such that
\begin{equation}\label{eq-X-TD-tilde-1}
 \p_x\left(\sup_{0\le s\le t}||\tilde{X}_s||\ge \delta\right)\le Ce^{-\frac{\delta^2}{16t}}.
 \end{equation}
\item[(3)] There exist  $c, c'>0$ such that
 \begin{equation}\label{eq-X-TD-tilde-2}
 \p_x(\tilde{T}_\Omega<t)\le c'e^{-\frac{d_{cc}^2(x,\pO)}{ct}},
 \end{equation}
 where $d_{cc}(x,\pO)$ is Carnot-Carath\'eodory distance from $x$ to $\partial \Omega$.
 \item[(4)] (Principle of not feeling the boundary)
Let $\Omega$ and $\Omega_\ep$ be as given previously, then
\begin{equation}\label{eq-not-feel-bdry-tilde}
\int_{\Omega\setminus \Omega_\ep}\p_x(\tilde{T}_\Omega>t)dx=\Vol(\Omega)- \Vol(\Omega_\ep)+ O(e^{-\ep^2/ct})
\end{equation}
for some constant $c>0$.
\end{itemize}
\end{lemma}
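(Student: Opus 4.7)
My plan is to handle the four estimates uniformly, by decomposing the homogeneous norm
$$
\|\tilde{X}_s\|^2 = (B^N_s)^2 + (B^T_s)^2 + s|R_s|
$$
into its Brownian part and its ``area'' remainder, and bounding each piece separately. For any threshold $M>0$, a union bound gives
$$
\left\{ \sup_{0\le s\le t}\|\tilde{X}_s\|^2 > M \right\} \subseteq \left\{ \sup_{s\le t}|B^N_s| > \tfrac{\sqrt{M}}{2} \right\} \cup \left\{ \sup_{s\le t}|B^T_s| > \tfrac{\sqrt{M}}{2} \right\} \cup \left\{ t\sup_{s\le t}|R_s| > \tfrac{M}{2} \right\},
$$
where I have used $\sup_{s\le t} s|R_s| \le t \sup_{s\le t} |R_s|$ in the last event. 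The first two events are controlled by the reflection principle together with the standard Gaussian tail bound, which yields $\p(\sup_{s\le t}|B_s| > a) \le 4e^{-a^2/(2t)}$, while the third is controlled by the Azencott--Castell estimate \eqref{eq-R-1-est}.

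For part (1), I take $M = t^{1-\alpha}$: the Brownian contribution is bounded by $8\exp(-t^{-\alpha}/8)$, and \eqref{eq-R-1-est} applied with $R = t^{-\alpha}/2$ bounds the remainder contribution by $\exp(-c\,t^{-1-\alpha\alpha_0})$; both are of the form $C\exp(-c/t^{\alpha'})$ with $\alpha' = \alpha$. For part (2), I take $M = \delta^2$: the Brownian contribution is at most $8\,e^{-\delta^2/(8t)}$, while the remainder contribution $\exp(-c\,\delta^{2\alpha_0}/t^{1+\alpha_0})$ is asymptotically much smaller as $t\to 0$, and both fit into $C\,e^{-\delta^2/(16t)}$ after adjusting constants.

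Parts (3) and (4) then follow quickly from (2). For (3), the event $\{\tilde{T}_\Omega < t\}$ forces $\sup_{s\le t} d_{cc}(\tilde{x}_s,x) \ge d_{cc}(x,\pO)$; Lemma \ref{lemma-equi-dist} converts this into $\sup_{s\le t} \|\tilde{X}_s\| \ge K^{-1} d_{cc}(x,\pO)$, so applying (2) with $\delta = K^{-1} d_{cc}(x,\pO)$ gives the desired bound after absorbing $K^{-2}$ into the constant. For (4), the identity $\p_x(\tilde{T}_\Omega > t) = 1 - \p_x(\tilde{T}_\Omega \le t)$ combined with $d_{cc}(x,\pO) \ge \ep$ on $\Omega\setminus\Omega_\ep$ gives
$$
\int_{\Omega\setminus\Omega_\ep} \p_x(\tilde{T}_\Omega > t)\, dx = \Vol(\Omega) - \Vol(\Omega_\ep) - \int_{\Omega\setminus\Omega_\ep} \p_x(\tilde{T}_\Omega \le t)\, dx,
$$
and (3) bounds the last integral by $c'\,\Vol(\Omega)\,e^{-\ep^2/(ct)} = O(e^{-\ep^2/(ct)})$ after a further adjustment of $c$.

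The only mildly delicate point is verifying that the remainder term $sR_s$ never swamps the Brownian part in the regime of interest; this amounts to the elementary observation that, for small $t$ and $\alpha_0>0$, the exponents $t^{-1-\alpha\alpha_0}$ and $\delta^{2\alpha_0}/t^{1+\alpha_0}$ are asymptotically much larger than $t^{-\alpha}$ and $\delta^2/t$ respectively, so \eqref{eq-R-1-est} contributes only a super-exponentially smaller error. Beyond this bookkeeping, the proof is a routine combination of the reflection principle with the subexponential tail bound on $R$.
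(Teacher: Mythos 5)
Your proposal is correct and follows essentially the same route as the paper: decompose $\|\tilde{X}_s\|^2$ into the two Brownian coordinates and the remainder $s R_s$, bound the former via the reflection principle/Brownian scaling and the latter via the Azencott--Castell estimate \eqref{eq-R-1-est}, then deduce (3) from (2) using the comparability of $d_{cc}$ with $\|\varphi_x^{-1}(\cdot)\|$ from Lemma \ref{lemma-equi-dist}, and (4) by integrating (3) over $\Omega\setminus\Omega_\ep$ where $d_{cc}(x,\pO)\ge\ep$. Your bookkeeping of the exponents (e.g.\ that $t^{-1-\alpha\alpha_0}$ and $\delta^{2\alpha_0}/t^{1+\alpha_0}$ dominate the Gaussian exponents) matches the paper's choice $\alpha'=\min\{\alpha,\alpha_0\alpha\}$ in spirit and is, if anything, slightly more carefully written out.
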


\begin{proof}
Note $||X_s||^2=|B^N_t|^2+|B^T_t|^2+|tR_t |$, and for $B^i_t$, $i=N, T$, we know that for any $\alpha>0$ there exists $c>0$ such that
\[
\p\bigg(\sup_{0\le s\le t}|B^i_s|^2> t^{1-\alpha}\bigg)=
\p\bigg(\sup_{0\le s\le 1}|B^i_s|^2> t^{-\alpha}\bigg)
\le \exp\bigg(-\frac{c}{t^{\alpha}} \bigg).
\]
Moreover, for $t\in[0,1)$ small enough, by \eqref{eq-R-1-est} we have
\[
\p\bigg(\sup_{0\le s\le t}|sR_s|> t^{-\alpha}\bigg)\le \p\bigg(\sup_{0\le s\le 1}|R_s|> t^{-1-\alpha}\bigg)
\le \exp\bigg(-\frac{1}{c_0 t^{\alpha_0\alpha}} \bigg).
\]
We then complete the proof of (1) by letting $\alpha'=\min\{ \alpha, \alpha_0\alpha\}$. The proof of (2) follows the same argument as that of (1).
To see (3), just note that
\[
\p_x(\tilde{T}_\Omega<t)\le \p_x\left( \sup_{0\le s\le t}d_{cc}(\tilde{x}_s, x)>d_{cc}(x,\pO)\right).
\]
Due to the equivalence between $d_{cc}(x,y)$ and $\|\varphi^{-1}_x(y)\|$ for any $y\in\mathcal{O}_x$, there exists  $C>0$ such that
\[
 \p_x\left( \sup_{0\le s\le t}d_{cc}(\tilde{x}_s, x)>d_{cc}(x,\pO)\right) \le \p_x\left( \sup_{0\le s\le t}||\tilde{X}_s||>Cd_{cc}(x,\pO)\right).
\]
By plugging $\delta=d_{cc}(x,\pO)$ into \eqref{eq-X-TD-tilde-1} we obtain \eqref{eq-X-TD-tilde-2}. At last, from (3) we have
\[
\int_{\Omega\setminus \Omega_\ep}\p_x(\tilde{T}_\Omega>t)dx=\Vol(\Omega\setminus \Omega_\ep)(1-O(e^{-\ep^2/ct})),
\]
which immediately implies \eqref{eq-not-feel-bdry-tilde}.
\end{proof}

Next, from \eqref{eq-delta-L} we know that ${x}_t$ is a time-changed version of $\tilde{x}_t$. Precisely, let $\mathfrak{t}(t)=\int_0^t{f(\tilde{x}_s)}ds$ and $\mathfrak{t}^{-1}(t)=\sup\{s:\mathfrak{t}(s)\le t \}$, then we have
\[
{x}_t=\tilde{x}_{\mathfrak{t}^{-1}(t)}.
\]
The exit time of $\tilde{x}_t$ is $\tilde{T}_\Omega=\mathfrak{t}^{-1}(T_\Omega)$, hence
\begin{equation}\label{eq-T-Op-Tilde-OP}
\p_x(\tilde{T}_\Omega>t)=\p_x\bigg({T}_\Omega>\int_0^{t}{f(\tilde{x}_u)}du \bigg).
\end{equation}
Denote by $\tilde{\bQ}_\Omega(t)$ the heat content associated with $\tilde{x}_t$. Then we can easily show that $\tilde{\bQ}_\Omega(t)$ differs from ${\bQ}_\Omega(t)$ by $o(t)$.

\begin{proposition}\label{prop-beta}
Let ${\bQ}_\Omega(t)$ and $\tilde{\bQ}_\Omega(t)$ be given as above. Then
\[
{\bQ}_\Omega(t)=\tilde{\bQ}_\Omega(t)+o(t).
\]
\end{proposition}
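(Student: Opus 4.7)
By \eqref{eq-T-Op-Tilde-OP},
\[
\bQ_\Omega(t) - \tilde{\bQ}_\Omega(t) = \int_\Omega \E_x\bigl[\mathbbm{1}(T_\Omega>t) - \mathbbm{1}(T_\Omega>\mathfrak{t}(t))\bigr]\,dx,
\]
with $\mathfrak{t}(t) = \int_0^t f(\tilde{x}_u)\,du$, and since $f(x)=1$ by \eqref{eq-N-T-f} the random clock $\mathfrak{t}$ stays close to the identity for small $t$. My first move is a near/far split: write $\Omega = (\Omega\setminus\Omega_\ep) \cup \Omega_\ep$ with $\ep=\ep(t) \to 0$ and $\ep^2/t \to \infty$ (say $\ep = t^{1/3}$). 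On $\Omega\setminus\Omega_\ep$, both $v$ and $\tilde{v}$ agree with $1$ up to $O(e^{-\ep^2/ct})$: for $\tilde v$ this is Lemma \ref{lemma-X-s-est}(4), while the analogous bound for $v$ follows by combining Lemma \ref{lemma-X-s-est}(3) with boundedness of $f$ on compact subsets of $\Omega$. This region contributes super-polynomially small error.

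On $\Omega_\ep$ I would use the pointwise bound
\[
|v(x,t) - \tilde{v}(x,t)| \le \p_x\bigl(T_\Omega \in [t\wedge\mathfrak{t}(t),\,t\vee\mathfrak{t}(t)]\bigr).
\]
A Taylor expansion of \eqref{eq-N-T-f} at $q=x$ yields $|f(q)-1| \le C_\lambda\,\|\varphi_x^{-1}(q)\|$, and combined with Lemma \ref{lemma-X-s-est}(1) this produces, for any small $\alpha>0$, an event $\mathcal G_t$ with $\p(\mathcal G_t^c) \le C e^{-c/t^{\alpha'}}$ on which $\sup_{0\le s\le t}|\mathfrak{t}(s) - s| \le \delta_t := C t^{(3-\alpha)/2}$. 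A Fubini swap then dominates the $\Omega_\ep$-contribution to $|\bQ_\Omega - \tilde{\bQ}_\Omega|$ by $\bQ_\Omega(t-\delta_t) - \bQ_\Omega(t+\delta_t)$ plus exponentially small pieces.

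\textbf{Main obstacle.} A crude Lipschitz estimate of $\bQ_\Omega$ based on the $\sqrt t$-leading Bramanti--Miranda--Pallara term gives $\bQ_\Omega(t-\delta_t) - \bQ_\Omega(t+\delta_t) = O(\delta_t/\sqrt t) = O(t^{1-\alpha/2})$, which is strictly \emph{larger} than $t$ as $t\to 0$. To attain $o(t)$ one must exploit the mean-zero structure of the time change. Another Taylor expansion of \eqref{eq-N-T-f} gives $f(\tilde{x}_s) - 1 = 2\lambda B^T_s + O(\|\tilde X_s\|^2)$, so that $\E_x[\mathfrak{t}(t)-t] = O(t^2)$ even though the $L^2$-fluctuation is $O(t^{3/2})$. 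Splitting $\mathfrak{t}(t) - t = M_t + R_t$ with $M_t = 2\lambda\int_0^t B^T_s\,ds$ a centered Gaussian and $\|R_t\|_{L^2} = O(t^2)$, I would replace the pointwise indicator estimate by a first-order linearization in $\mathfrak{t}(t)-t$, decouple the Gaussian piece $M_t$ from $T_\Omega$ via the strong Markov property applied at $\tilde T_\Omega$ together with the explicit representation \eqref{eq-sub-BM}, and check that the remaining $R_t$-piece and the residual $O(t^2)$ mean, integrated against the $O(1/\sqrt t)$ flux of the exit-time density, yields $O(t^{3/2}) = o(t)$. The technically delicate step is making this decoupling rigorous, since $M_t$ and $T_\Omega$ are both functionals of the same underlying process $\tilde x$.
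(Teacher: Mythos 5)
Your first two steps reproduce the paper's argument: the paper likewise works on the complement of an exponentially rare event, bounds $f(\tilde x_u)$ between $1\pm c\lambda\,\sup_{s\le t}d_{cc}(\tilde x_s,x)$, sandwiches $\tilde\bQ_\Omega(t)$ between $\bQ_\Omega(t\pm c\lambda t^{2-\alpha})$, invokes the principle of not feeling the boundary off $\Omega_\ep$, and concludes from $\sqrt{t\pm c\lambda t^{2-\alpha}}=\sqrt t+o(t^{3/2-\alpha})$. The paper stops there. Your ``main obstacle'' paragraph is therefore not a self-inflicted complication but a pertinent observation about exactly this step: since $\sup_{s\le t}\|\tilde X_s\|$ is genuinely of order $\sqrt t$, Lemma \ref{lemma-X-s-est}(1) can only control it by $t^{1/2-\eta}$ (equivalently, the paper's displayed estimate for $\p_x(\sup_s d_{cc}(\tilde x_s,x)>t^{1-\alpha})$ needs $\alpha>1/2$), so the admissible clock error is $\delta_t\gtrsim t^{3/2-\eta}$, and the induced change $\bQ_\Omega(t-\delta_t)-\bQ_\Omega(t+\delta_t)\sim\sigma_0(\pO)\,\delta_t/\sqrt{t}$ coming from the leading $\sqrt t$ term is of order $t^{1-\eta}$, not $o(t)$. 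You have correctly diagnosed that a purely deterministic sandwich cannot reach $o(t)$ and that one must exploit the near-cancellation of $\mathfrak t(t)-t$ in mean; your computations $\E_x[\mathfrak t(t)-t]=O(t^2)$ and $M_t=2\lambda\int_0^tB^T_s\,ds=O(t^{3/2})$ in $L^2$ are both right.

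The gap is that your repair is a plan, not a proof, and it sits exactly where you flag it. To conclude you must show that $\int_{\Omega_\ep}\E_x\bigl[\mathbbm 1(t<T_\Omega\le t+M_t)-\mathbbm 1(t+M_t<T_\Omega\le t)\bigr]dx$ equals $\E[M_t]$ times an exit-time flux of order $t^{-1/2}$, up to $o(t)$. This does not follow from ``the strong Markov property applied at $\tilde T_\Omega$'': the post-exit increment $2\lambda\int_{\tilde T_\Omega}^tB^T_s\,ds$ is not independent of the exit data (its integrand contains $B^T_{\tilde T_\Omega}$, which is correlated with the exit position and, through the boundary function $h$, with the exit event itself); $T_\Omega$ depends on $B^T$ and on L\'evy's area, not only on $B^N$; and a ``first-order linearization of the indicator in $\mathfrak t(t)-t$'' has no pointwise meaning — it requires a quantitative conditional density estimate for $T_\Omega$ given the $B^T$-data (or differentiability in $u$ of $u\mapsto\int_{\Omega_\ep}\p_x(T_\Omega\le u\mid\cdot)\,dx$ with uniform control), none of which you construct. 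Until that decoupling is carried out with errors that are $o(t)$ after the $x$-integration, the proposal isolates the real difficulty in Proposition \ref{prop-beta} but does not resolve it.
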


\begin{proof}
From \eqref{eq-N-T-f} we know that $1 - c \lambda \, d_{cc}(x,y) \le f(y) \le 1 + c \lambda \, d_{cc}(x,y)$, provided $y$ is sufficiently close to $x$. Moreover, from Lemma \ref{lemma-equi-dist} and \eqref{eq-X-s-est} we know that for any $0<\alpha<1$ there exist $c_1, \alpha', c', C>0$ such that
\[
\p_x\bigg( \sup_{0\le s\le t} d_{cc}(\tilde{x}_s,x)>t^{1-\alpha}\bigg)
\le \p \bigg( \sup_{0\le s\le t} ||\varphi_x^{-1}(\tilde{x}_s)||>c_1t^{1-\alpha}\bigg)
\le Ce^{-c'/t^{\alpha'}},
\]
hence
\begin{align*}
\p_x\bigg({T}_\Omega>\int_0^{t}{f(\tilde{x}_u)}du\bigg)&\ge\p_x\bigg({T}_\Omega>\int_0^{t}{(1+c\lambda t^{1-\alpha})}du\bigg)+O(e^{-c'/t^{\alpha'}})\\
&=\p_x\bigg({T}_\Omega>t+{c\lambda }t^{2-\alpha}\bigg)+O(e^{-c'/t^{\alpha'}})
\end{align*}
and
\begin{align*}
\p_x\bigg({T}_\Omega>\int_0^{t}{f(\tilde{x}_u)}du\bigg)&\le\p_x\bigg({T}_\Omega>\int_0^{t}{(1-c\lambda t^{1-\alpha})}du\bigg)+O(e^{-c'/t^{\alpha'}})\\
&=\p_x\bigg({T}_\Omega>t-{c\lambda }t^{2-\alpha}\bigg)+O(e^{-c'/t^{\alpha'}})
\end{align*}
Therefore by \eqref{eq-not-feel-bdry-tilde} and \eqref{eq-T-Op-Tilde-OP} we have
\begin{align*}
\int_{\Omega_\ep}\p_x(\tilde{T}_\Omega>t)dt
&\ge\int_{\Omega_\ep}\p_x\bigg({T}_\Omega>t-{c\lambda }t^{2-\alpha}\bigg)dt+O(e^{-c'/t^{\alpha'}})\\
&={\bQ}_\Omega\bigg(t-{c\lambda }t^{2-\alpha}\bigg)+O(e^{-c'/t^{\alpha'}}).
\end{align*}
and
\begin{align*}
\int_{\Omega_\ep}\p_x(\tilde{T}_\Omega>t)dt
&\le\int_{\Omega_\ep}\p_x\bigg({T}_\Omega>t+\frac{c\lambda }{2-\alpha}t^{2-\alpha}\bigg)dt+O(e^{-c'/t^{\alpha'}})\\
&={\bQ}_\Omega\bigg(t+{c\lambda }t^{2-\alpha}\bigg)+O(e^{-c'/t^{\alpha'}}).
\end{align*}
Observe that $\sqrt{t\pm{c\lambda }t^{2-\alpha}}=\sqrt{t}+o(t^{3/2-\alpha})$. Applying the `principle of not feeling the boundary' for $x_t$ we know that
$\tilde{\bQ}_\Omega(t)=\int_{\Omega_\ep}\p_x(\tilde{T}_\Omega>t)dt+O(e^{-\ep^2/t})={\bQ}_\Omega(t)+o(t)$. The proof is complete.
\end{proof}

\subsection{Second reduction: eliminating higher order remainder terms}\label{subsec:reduction2}

Let us denote L\'evy's area process by $A_t := \int_0^t (B_s^NdB_s^T-B_s^TdB_s^N)$, and consider the `truncated process'
\begin{equation}\label{eq-x-prime}
x_t':=\exp_x\left( -B^N_tN+B^T_tT+ A_tZ\right).
\end{equation}
By \cite[Theorem 2.1]{cas:asymptotic} we know that
\[
x_t'=\tilde{x}_t+t^{3/2}P_t
\]
where $P_t$  satisfies that $\exists\, \alpha_1, c_1>0$ such that for any $R>c_1$,
\[
\p\bigg(\sup_{0\le s\le t}|P_s|\ge R\bigg)\le \exp\bigg(-\frac{R^{\alpha_1}}{c_1t} \bigg),
\]
where $|\cdot|$ is the Euclidean norm in $\R^3$. Since the Riemannian metric $g_1$ and the Euclidean metric are locally bi-Lipschitz equivalent, we may equivalently write
\begin{equation}\label{eq-P-est}
\p\bigg(\sup_{0\le s\le t} s^{-3/2} d_{g_1}(x_s',x_s)\ge R\bigg)\le \exp\bigg(-\frac{R^{\alpha_1}}{c_1t} \bigg),
\end{equation}
for a possibly different choice of $c_1$. Consider the associated process on $T_x\fH$,
\begin{equation}\label{eq-X-NTZ}
X_t:=\varphi^{-1}_x(x_t')=\bigg(-B^N_t,\,B^T_t,\, A_t \bigg).
\end{equation}
Following the same arguments, we easily obtain that Lemma \ref{lemma-X-s-est} holds for $X_t$ as well.

\begin{lemma}\label{lemma-X-TD-prime}
Let ${X}_t$ be given as above. Then the following estimates hold when $t$ is small enough.
\begin{itemize}
\item[(1)] For any $0<\alpha<1$, there exist $C, c, \alpha'>0$ such that
\begin{equation}\label{eq-X-s-est-prime}
\p\bigg(\sup_{0\le s\le t}||{X}_s||^2> t^{1-\alpha}\bigg)\le C\exp\bigg(-\frac{c}{t^{\alpha'}} \bigg).
\end{equation}
\item[(2)] For any $\delta>0$, there exists $C>0$ such that
\begin{equation}\label{eq-X-TD-delta-1}
 \p_x\left(\sup_{0\le s\le t}||{X}_s||\ge \delta\right)\le Ce^{-\frac{\delta^2}{16t}}.
 \end{equation}
\item[(3)] There exist  $c, c'>0$ such that
 \begin{equation}\label{eq-X-TD-delta-2}
 \p_x({T}'_\Omega<t)\le c'e^{-\frac{d_{cc}^2(x,\pO)}{ct}},
 \end{equation}
 where $d_{cc}(x,\pO)$ is Carnot-Carath\'eodory distance from $x$ to $\partial \Omega$.
 \item[(4)] (Principle of not feeling the boundary)
Let $\Omega$ and $\Omega_\ep$ be as given previously, then
\begin{equation}\label{eq-not-feel-bdry-prime}
\int_{\Omega\setminus \Omega_\ep}\p_x({T}'_\Omega>t)dx=\Vol(\Omega)- \Vol(\Omega_\ep)+ O(e^{-\ep^2/ct})
\end{equation}
for some constant $c>0$, where $T'_{\Omega}=\inf\{t>0, {x}'_t\in \fH \setminus \Omega \}$.
\end{itemize}
\end{lemma}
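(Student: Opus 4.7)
The statement of Lemma \ref{lemma-X-TD-prime} is a verbatim reprise of Lemma \ref{lemma-X-s-est}, with $\tilde X_s$ replaced by $X_s$ and $\tilde T_\Omega$ replaced by $T_\Omega'$. My plan is to rerun the proof of Lemma \ref{lemma-X-s-est} essentially line by line; the only new ingredient that must be supplied is a substitute for the Strichartz-remainder bound \eqref{eq-R-1-est}, which has to be furnished directly for L\'evy's area process $A_t$.

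For this new input I would exploit Brownian scaling to write
\[
\sup_{0\le s\le t}|A_s| \stackrel{d}{=} t\,\sup_{0\le s\le 1}|A_s|,
\]
and then control the tail of $\sup_{0\le s\le 1}|A_s|$. A short self-contained derivation uses the exponential martingale
\[
M_t^\lambda = \exp\bigl(\lambda A_t - \tfrac{\lambda^2}{2}\textstyle\int_0^t(|B^N_s|^2+|B^T_s|^2)\,ds\bigr),
\]
together with Doob's inequality and a separate Gaussian-type bound on $\int_0^1(|B^N_s|^2+|B^T_s|^2)\,ds$; equivalently one may cite P.~L\'evy's formula $\E e^{i\lambda A_1} = 1/\cosh(\lambda/2)$, which gives exponential tails for $A_1$. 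Either route yields constants $c, c_1 > 0$ such that $\p(\sup_{0\le s\le 1}|A_s|\ge R) \le c_1 e^{-cR}$, which is stronger than \eqref{eq-R-1-est} and plays exactly the same role in what follows.

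With this estimate in hand, part (1) follows by decomposing $\|X_s\|^2 = |B^N_s|^2 + |B^T_s|^2 + |A_s|$, bounding each Brownian component via the reflection principle as in the proof of Lemma \ref{lemma-X-s-est}(1), and bounding the area component by the scaled tail estimate above (giving $\exp(-c/t^\alpha)$ after scaling). Part (2) follows from the same decomposition: for fixed $\delta>0$ and small $t$, the area contribution $c_1\exp(-c\delta^2/t)$ is absorbed into the Gaussian contribution $C\exp(-\delta^2/(16t))$ once constants are adjusted, while the Brownian terms already yield the stated Gaussian rate. Part (3) reduces, via Lemma \ref{lemma-equi-dist}, to the observation that $\{T'_\Omega < t\}$ forces $\sup_{0\le s\le t}\|X_s\| \ge K^{-1}d_{cc}(x,\pO)$, so applying part (2) with $\delta = K^{-1}d_{cc}(x,\pO)$ closes the argument; and part (4) follows by integrating (3) over $\Omega\setminus\Omega_\ep$ and using $d_{cc}(\cdot,\pO)\ge\ep$ there.

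The only genuine obstacle is producing the tail bound on L\'evy's area, and even that is classical and short; every subsequent step is a line-by-line rerun of the corresponding argument already given for $\tilde X_s$, requiring no new geometric input.
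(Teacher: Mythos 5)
Your proposal is correct and follows essentially the same route as the paper, which offers no separate proof here beyond the remark that the arguments of Lemma \ref{lemma-X-s-est} carry over verbatim to $X_t$. The one detail you add --- an explicit exponential tail bound for $\sup_{0\le s\le t}|A_s|$ via scaling plus the classical L\'evy-area distribution (or the time-change representation $A_s=\beta_{\int_0^s((B^N_u)^2+(B^T_u)^2)\,du}$ that the paper itself uses later) --- is exactly the ingredient the paper leaves implicit, and it is strong enough (decay rate better than $e^{-R/3}$) to be absorbed into the stated constant $16$ in part (2).
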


Let  $\bQ'_\Omega(t)=\int_{\Omega}\p_x({T}'_\Omega>t)dt$. We then have the following heat content expansion for $\bQ'_\Omega(t)$ when $t\to0$.

\begin{theorem}\label{thm-heat-content-prime}
Let $\Omega \subset \Heis$ be a bounded domain in $\fH$ whose boundary is smooth and has no characteristic points. Let $x'_t$ be the process given in \eqref{eq-x-prime}. Then the associated heat content has the following expansion
\begin{equation}\label{heat-content-expansion}
{\bQ}'_{\Omega}(t) = \Vol(\Omega) - \sqrt{\frac{2t}{\pi}} \sigma_0({\partial \Omega}) + \frac{t}{4} \int_{\partial \Omega} H_{\pO,0}(s) \, d\sigma_0(s) + o(t)
\end{equation}
as $t \to 0$.
\end{theorem}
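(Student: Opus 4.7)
The plan is to combine the probabilistic formula $\Vol(\Omega) - \bQ'_\Omega(t) = \int_\Omega \p_x(T'_\Omega \le t)\, dx$ with the Fermi-type coordinates $\varphi_x$ of Subsection~\ref{subsec:tubular-neighborhoods}, reducing matters to the asymptotic analysis of an expected running maximum of a Brownian motion perturbed by a small quadratic term. By Lemma~\ref{lemma-X-TD-prime}(3), the contribution from $\Omega \setminus \Omega_\ep$ is $O(e^{-\ep^2/(ct)})$. Using the parametrization $\Psi:\pO \times (0,\ep) \to \Omega_\ep$ of Lemma~\ref{lemma-J} with $J_\Psi(s,r) = 1 - H_{\pO,0}(s)\, r + O(r^2)$, and the fact that in the coordinates $\varphi_{\Psi(s,r)}$ the process becomes $X_u = (-B^N_u, B^T_u, A_u)$ starting at the origin while $\pO$ is the graph $\xi = r - h(y,z;s)$ (Lemma~\ref{lemma-h-H}), we can recast the exit condition as $\{M^\star_t(s) \ge r\}$, where
\[
M^\star_t(s) := \sup_{0 \le u \le t} \bigl(-B^N_u + h(B^T_u, A_u; s)\bigr) \ge 0.
\]
Fubini together with Lemma~\ref{lemma-X-TD-prime}(2) then gives $\int_0^\ep \p_{\Psi(s,r)}(T'_\Omega \le t)\, dr = \E[M^\star_t(s)] + O(e^{-\ep^2/(ct)})$.

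The central assertion is $\E[M^\star_t(s)] = \sqrt{2t/\pi} + (t/4)\, H_{\pO,0}(s) + o(t)$, uniformly in $s$. Setting $G_u := -B^N_u$, $\phi_u := h(B^T_u, A_u; s)$, and letting $u^\star \in [0,t]$ denote the time at which $G$ attains its maximum, the argument hinges on the perturbative identity
\[
\E[\sup_{u \le t}(G_u + \phi_u)] = \E[\sup_{u \le t} G_u] + \E[\phi_{u^\star}] + o(t),
\]
which is plausible because $\sup G \sim \sqrt{t}$ dominates $\phi_u$ (which is of order $t^{1-\alpha}$ on an overwhelming-probability event). The leading term is the classical $\E[\sup G] = \E[|B^N_t|] = \sqrt{2t/\pi}$. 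For the correction, the arcsine law gives $\E[u^\star] = t/2$, and since $B^T$ is independent of $B^N$ we have $\E[(B^T_{u^\star})^2] = \E[u^\star] = t/2$. The It\^o product identity $B^N_u B^T_u = \int_0^u B^N\, dB^T + \int_0^u B^T\, dB^N$, together with conditioning on $B^N$, shows that $\E[B^N_{u^\star} B^T_{u^\star}] = 0$ and $\E[\int_0^{u^\star} B^N\, dB^T] = 0$, whence $\E[A_{u^\star}] = 0$. Finally, the remainder bound of Lemma~\ref{lemma-h-H} combined with the scalings $\E[(B^T_{u^\star})^4] = O(t^2)$ and $\E[A_{u^\star}^2] = O(t^2)$ yields $\E[R(B^T_{u^\star}, A_{u^\star}; s)] = O(t^{3/2}) = o(t)$. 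Assembling, $\E[\phi_{u^\star}] = \tfrac{1}{2} H_{\pO,0}(s) \cdot \tfrac{t}{2} + k_1(s) \cdot 0 + o(t) = (t/4)\, H_{\pO,0}(s) + o(t)$.

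Separately, comparing the curved-boundary exit probability to the flat-half-space one (which is $\mathrm{erfc}(r/\sqrt{2t})$) yields $\int_0^\ep r\, \p_{\Psi(s,r)}(T'_\Omega \le t)\, dr = t/2 + o(t)$, using the elementary evaluation $\int_0^\infty r\,\mathrm{erfc}(r/\sqrt{2t})\, dr = t/2$. Putting everything together gives
\[
\Vol(\Omega) - \bQ'_\Omega(t) = \sqrt{\tfrac{2t}{\pi}}\, \sigma_0(\pO) + \tfrac{t}{4} \int_\pO H_{\pO,0}\, d\sigma_0 - \tfrac{t}{2} \int_\pO H_{\pO,0}\, d\sigma_0 + o(t),
\]
which rearranges to the desired formula~\eqref{heat-content-expansion}. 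The main obstacle, which I expect to occupy the three lemmas deferred to Section~\ref{sec:proof}, is the rigorous justification of the perturbative identity: from the sandwich $\phi_{u^\star} \le \sup(G+\phi) - \sup G \le \phi_{u^{\star\star}}$ (where $u^{\star\star}$ is the argmax of $G+\phi$), one must control the displacement $|u^{\star\star} - u^\star|$ via the local behavior of Brownian motion near its maximum and show that $\E[\phi_{u^{\star\star}} - \phi_{u^\star}] = o(t)$, all uniformly in $s$. Presumably one lemma isolates the main $H_{\pO,0}$-driven contribution via the arcsine law, another handles the vanishing-mean L\'evy-area term $k_1(s) A_u$, and a third controls the cubic remainder in $h$ together with the argmax-displacement error.
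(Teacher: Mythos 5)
Your proposal is correct and follows essentially the same route as the paper: your argmax time $u^\star$ is exactly the paper's $\tau_t$, your main-term computation ($\E[\sup G]=\sqrt{2t/\pi}$, $\E[(B^T_{u^\star})^2]=\E[\tau_t]=t/2$ via the arcsine/Louchard density, $\E[A_{\tau_t}]=0$, the Jacobian contribution $-\tfrac{t}{2}\int H$, and the $O(t^{3/2})$ cubic remainder) is the content of the paper's Lemma \ref{lemma-main-I}, and the two sides of your sandwich $\phi_{u^\star}\le \sup(G+\phi)-\sup G\le \phi_{u^{\star\star}}$ correspond precisely to the events estimated in Lemmas \ref{lemma-main-II} and \ref{lemma-main-III}. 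You correctly identify the argmax-displacement control as the real technical work; the paper carries it out via Brownian meander estimates after time $\tau_t$ rather than via local behavior of $B^N$ near its maximum, but the substance is the same.
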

We postpone the proof of the above theorem to Subsection \ref{subsec:reduction3} and Section \ref{sec:proof}. In the rest of this section, we sketch the proof of the main theorem.

\begin{proof}[Proof of Theorem \ref{thm-heat-content-prob}]
From \eqref{eq-not-feel-bdry-prime} we know that $\bQ'_\Omega(t)=\int_{\Omega_\ep}\p_x({T}'_\Omega>t)dt+O(e^{-\ep^2/t})$.
Moreover, note
\begin{equation*}\begin{split}
\p_x(\tilde{T}_\Omega>t)
&= \p_x\bigg(\forall\, 0\le s\le t, \tilde{x}_s\in\Omega \bigg) \\
&\le \p_x\bigg(\forall\, 0\le s\le t, x'_s\in\Omega^+, \sup_{0\le s\le t} s^{-3/2}d_{g_1}(x'_s,\tilde{x}_s)<R \bigg)+O(e^{-C_R/t})
\end{split}\end{equation*}
for $\Omega^+=\{x\in\fH, d_{g_1}(x, \Omega)\le t^{3/2}R\} $. Here $C_R>0$ is a constant depending on $R$, and the last inequality comes from \eqref{eq-P-est}.
Also since
\begin{align*}
\p_x(\tilde{T}_\Omega>t)
&=\p_x\bigg(\forall\, 0\le s\le t, \tilde{x}_s\in\Omega \bigg)\\
&\ge\p_x\bigg(\forall\, 0\le s\le t, {x}'_s\in\Omega^-,  \sup_{0\le s\le t} s^{-3/2}d_{g_1}(x'_s,\tilde{x}_s)<R  \bigg)\\
&\ge \p_x\bigg(\forall\, 0\le s\le t, {x}'_s\in\Omega^-  \bigg)+O(e^{-C_R/t}),
\end{align*}
where $\Omega^-=\{x\in\fH, d_{g_1}(x, \Omega^c)\ge t^{3/2}R\}$, we have
\[
\p_x({T}'_{\Omega^-}>t)+o(t)\le \p_x(\tilde{T}_\Omega>t)\le \p_x({T}'_{\Omega^+}>t)+o(t).
\]
By the principle of not feeling the boundary we have ${\bQ}'_{\Omega^-}(t)+o(t)\le\tilde{\bQ}_\Omega(t)\le{\bQ}'_{\Omega^+}(t)+o(t)$.
Moreover, from Lemma \ref{lemma-compare-vol-peri-mean-curv} and Theorem \ref{thm-heat-content-prime} we obtain
\begin{equation}\label{eq-claim-beta}
{\bQ}'_\Omega(t)={\bQ}'_{\Omega^\pm}(t)+o(t).
\end{equation}
Therefore  $\tilde{\bQ}_\Omega(t)={\bQ}'_{\Omega}(t)+o(t)$. Together with Proposition \ref{prop-beta} we have
\[
{\bQ}_\Omega(t)={\bQ}'_{\Omega}(t)+o(t).
\]
Hence we complete the proof.
\end{proof}

\subsection{Third reduction: decomposing the main event into subevents}\label{subsec:reduction3}

In this section we reduce Theorem \ref{thm-heat-content-prime} to a sequence of lemmas. Following the intuition that the Markov process $x_t$ is most likely to exit $\Omega$ along the outward horizontal normal direction of the boundary, we track the furthest distance that $B^N$ can travel before time $t$ by considering the following process $\tau_t$. For each $t>0$,
\begin{equation}\label{eq-B^N-tau}
B^N_{\tau_t}=\sup_{0\le \tau\le t}B^N_{\tau}.
\end{equation}
The joint density of $B^N_{\tau_t}$ and $\tau_t$ is known.

\begin{lemma}\label{lemma-joint}
The joint density of $( B^N_{\tau_t}, \tau_t)$ is given by
\begin{equation}\label{eq-phi}
\Phi(\xi, \tau;t)=\frac{\xi e^{-\frac{\xi^2}{2\tau}}}{\pi\tau^{3/2}(t-\tau)^{1/2}}{\mathbbm{1}}_{[0,t)}(\tau){\mathbbm{1}}_{[0,\infty)}(\xi)
\end{equation}
\end{lemma}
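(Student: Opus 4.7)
The plan is to derive the joint density via the classical identification of the argmax with a first passage time, followed by a strong Markov decomposition at that passage time.

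First, I would observe that on the full-measure event where $B^N$ admits a unique argmax on $[0,t]$, we have the identification $\tau_t = T_{M_t}$, where $M_t := B^N_{\tau_t}$ is the running maximum and $T_\xi := \inf\{s : B^N_s = \xi\}$ is the first passage time to level $\xi > 0$; indeed $B^N_{T_{M_t}} = M_t$ realises the supremum, and a.s.\ uniqueness of the argmax gives the identification. For fixed $\xi > 0$, $\tau \in (0,t)$, and small $\epsilon,\delta > 0$, I would introduce the event
$$
A_{\epsilon,\delta} := \{T_\xi \in (\tau,\tau+\delta),\ T_{\xi+\epsilon} > t\}.
$$
On $A_{\epsilon,\delta}$ we have $M_t \in [\xi,\xi+\epsilon)$ and $\tau_t = T_{M_t} \in [T_\xi,T_{\xi+\epsilon})$, with $T_{\xi+\epsilon}-T_\xi \to 0$ a.s.\ as $\epsilon \to 0$ by continuity of Brownian sample paths. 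Hence $A_{\epsilon,\delta}$ approximates $\{\tau_t \in d\tau,\ M_t \in d\xi\}$ in the limit.

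Next I would invoke the strong Markov property at the stopping time $T_\xi$: conditionally on $T_\xi = u$, the shifted process $W_s := B^N_{T_\xi+s}-\xi$ is an independent standard Brownian motion, so by the reflection principle
$$
\p(T_{\xi+\epsilon} > t \mid T_\xi = u) = \p\bigl(\sup_{0\le s\le t-u} W_s < \epsilon\bigr) = \p(|W_{t-u}| < \epsilon).
$$
Combining with the classical first-passage density $\p(T_\xi \in du) = \xi(2\pi u^3)^{-1/2}\, e^{-\xi^2/(2u)}\,du$ for $\xi, u > 0$,
$$
\p(A_{\epsilon,\delta}) = \int_\tau^{\tau+\delta} \frac{\xi\, e^{-\xi^2/(2u)}}{\sqrt{2\pi u^3}}\, \p(|W_{t-u}|<\epsilon)\,du.
$$
Dividing by $\epsilon\delta$ and letting $\epsilon,\delta \to 0^+$, the Gaussian density evaluated at $0$ gives $\p(|W_{t-u}|<\epsilon)/\epsilon \to \sqrt{2/(\pi(t-u))}$ while the outer integral localises at $u = \tau$, yielding the limit
$$
\frac{\xi\, e^{-\xi^2/(2\tau)}}{\sqrt{2\pi\tau^3}} \cdot \sqrt{\frac{2}{\pi(t-\tau)}} = \frac{\xi\, e^{-\xi^2/(2\tau)}}{\pi\,\tau^{3/2}(t-\tau)^{1/2}} = \Phi(\xi,\tau;t),
$$
with the indicator factors encoding the natural support $\tau \in [0,t)$, $\xi \ge 0$. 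As a sanity check, integrating $\Phi$ in $\xi$ over $(0,\infty)$ recovers the arcsine density of $\tau_t$, while integrating in $\tau$ over $(0,t)$ recovers the Rayleigh-type density $\sqrt{2/(\pi t)}\, e^{-\xi^2/(2t)}$ of $M_t$.

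The main obstacle is rigorously justifying the identification of $\p(A_{\epsilon,\delta})$ with $\p(\tau_t\in d\tau,\ M_t\in d\xi)$ in the limit, namely verifying that the discrepancy arising from $\tau_t$ possibly differing from $T_\xi$ by up to $T_{\xi+\epsilon}-T_\xi$ contributes only $o(\epsilon\delta)$. A technically cleaner though more computational alternative would be to evaluate the joint distribution function $\p(M_t\le\xi,\ \tau_t\le\tau)$ directly by applying the strong Markov property at the \emph{deterministic} time $\tau$---using the explicit joint density of $(M_\tau,B^N_\tau)$ coming from the reflection principle---and then extract $\Phi$ by differentiating twice.
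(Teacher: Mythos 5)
The paper offers no proof of this lemma at all---it simply cites Louchard \cite{louchard1968mouvement}---so there is nothing internal to compare against; your job is to supply the classical derivation, and what you have written is essentially the standard one. The decomposition at the first passage time $T_\xi$, the strong Markov property, the first-passage density $\xi(2\pi u^3)^{-1/2}e^{-\xi^2/(2u)}$, and the reflection identity $\p(\sup_{0\le s\le v}W_s<\epsilon)=\p(|W_v|<\epsilon)\sim\epsilon\sqrt{2/(\pi v)}$ combine exactly as you say to produce $\Phi(\xi,\tau;t)$, and your two marginal sanity checks (arcsine law in $\tau$, half-Gaussian law $\sqrt{2/(\pi t)}\,e^{-\xi^2/(2t)}$ in $\xi$) are both correct. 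You are also right to flag the one genuinely delicate point: $A_{\epsilon,\delta}=\{T_\xi\in(\tau,\tau+\delta),\,T_{\xi+\epsilon}>t\}$ is not literally $\{\tau_t\in(\tau,\tau+\delta),\,M_t\in[\xi,\xi+\epsilon)\}$, since the path may first hit $\xi$ inside the window but attain its (slightly higher) maximum much later; making the symmetric difference $o(\epsilon\delta)$ uniformly requires an argument. The fallback you propose is the cleanest fix: write $\p(\tau_t\le\tau,\,M_t\le\xi)=\p\bigl(M_\tau\le\xi,\ \sup_{\tau\le s\le t}(B^N_s-B^N_\tau)\le M_\tau-B^N_\tau\bigr)$, apply the Markov property at the \emph{deterministic} time $\tau$ together with the explicit reflection-principle density of $(M_\tau,B^N_\tau)$, and differentiate in $\tau$ and $\xi$. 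Either route is acceptable; I would just urge you to carry out one of them fully rather than leaving the limit identification as a stated obstacle, since that identification is the only step that is not purely mechanical.
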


For a proof, see \cite[p.\ 339]{louchard1968mouvement}.

Moreover, the event $\{x'_{\tau_t}\in \Omega\}$ captures the major part of the event that the process stays inside $\Omega$, namely $\{T'_\Omega>t\}$. We will estimate $\p_x(x'_{\tau_t}\in \Omega)$ as well as its difference from $\p_x(T'_\Omega>t)$.
Since
\[
 \p_x(x'_{\tau_t}\in \Omega)-\p_x(T'_\Omega>t)=\p(\tau_t<T'_\Omega\le t)+\p_x(T'_\Omega\le\tau_t\le t, x'_{\tau_t}\in \Omega).
 \]
we just need to estimate each of the terms
$$
\int_{\Omega_\ep} \p_x(x'_{\tau_t}\in \Omega)dx,
$$
$$
\int_{\Omega_\ep}\p_x(\tau_t<T'_\Omega\le t)dx,
$$
and
$$
\int_{\Omega_\ep} \p_x(T'_\Omega\le\tau_t\le t, x'_{\tau_t}\in \Omega)dx
$$
separately. These estimations are obtained in the following three lemmas, which in turn yields Theorem \ref{thm-heat-content-prime}. The proofs of these three lemmas are given in the following section.

\begin{lemma}\label{lemma-main-I}
Let $\Omega$, $\Omega_\ep$ and $x'_t$ be given as before. There exists a constant $C_1>0$ such that for $t>0$ small enough,
\[
\bigg|\int_{\Omega_\ep} \p_x(x'_{\tau_t}\in \Omega)dx-\Vol(\Omega_\ep)+\sqrt{\frac{2t}{\pi}}\sigma_0(\partial \Omega)-\frac{t}{4}\int_{\partial \Omega}H_{\pO,0}(s)d\sigma_0(s)\bigg|\le C_1t^{3/2}.
\]
\end{lemma}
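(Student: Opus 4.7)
The plan is to pass to the boundary-normal parametrization of Lemma~\ref{lemma-J} and the adapted chart $\varphi_x$, so that the event $\{x'_{\tau_t}\in\Omega\}$ becomes an explicit condition on Brownian functionals whose joint law is governed by Lemma~\ref{lemma-joint}. Concretely, write $x=\Psi(s,r)$ with $s\in\pO$ and $r=d_{cc}(x,\pO)\in(0,\ep)$, so that by Lemma~\ref{lemma-J}
\[
\int_{\Omega_\ep}\p_x(x'_{\tau_t}\in\Omega)\,dx=\int_{\pO}d\sigma_0(s)\int_0^\ep\p_x(x'_{\tau_t}\in\Omega)\,J_\Psi(s,r)\,dr,\qquad J_\Psi(s,r)=1-H_{\pO,0}(s)\,r+O(r^2).
\]
In the chart $\varphi_x$, the process $x'_t=\exp_x(-B^N_tN+B^T_tT+A_tZ)$ has coordinates $(B^N_t,B^T_t,A_t)$, and by \eqref{eq-NTZ-cor} the boundary $\pO\cap\mathcal{O}_x$ is the graph $\xi=r-h(y,z;s)$ with $\Omega\cap\mathcal{O}_x=\{\xi<r-h\}$. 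Modulo the event that $x'$ leaves $\mathcal{O}_x$, whose probability is $O(e^{-c/t})$ by Lemma~\ref{lemma-X-TD-prime}(2), this yields
\[
\p_x(x'_{\tau_t}\in\Omega)=\p(Y<r),\qquad Y:=B^N_{\tau_t}+h(B^T_{\tau_t},A_{\tau_t};s).
\]

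Using $\int_{\pO}\int_0^\ep J_\Psi\,dr\,d\sigma_0=\Vol(\Omega_\ep)$ together with $\p(Y<r)=1-\p(Y\ge r)$, the lemma reduces to showing that $\int_0^\ep\p(Y\ge r)J_\Psi(s,r)\,dr=\sqrt{2t/\pi}-\tfrac14 H_{\pO,0}(s)t+O(t^{3/2})$ uniformly in $s$. By Fubini,
\[
\int_0^\ep\p(Y\ge r)\,dr=\E\bigl[(Y\vee 0)\wedge\ep\bigr],\qquad \int_0^\ep r\,\p(Y\ge r)\,dr=\tfrac12\E\bigl[((Y\vee 0)\wedge\ep)^2\bigr],
\]
so after Taylor-expanding $h=\tfrac12 H_{\pO,0}(s)y^2+k_1(s)z+O(|y|^3+|yz|)$ via Lemma~\ref{lemma-h-H}, it suffices to compute the first two moments of $Y$. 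The ingredients are $\E[B^N_{\tau_t}]=\E[|B^N_t|]=\sqrt{2t/\pi}$ and $\E[(B^N_{\tau_t})^2]=t$ (reflection principle, or Lemma~\ref{lemma-joint}); $\E[(B^T_{\tau_t})^2]=\E[\tau_t]=t/2$, since $\tau_t$ is $B^N$-measurable, $B^T$ is independent of $B^N$, and $\tau_t$ has the arcsine law with mean $t/2$; the vanishing identities $\E[A_{\tau_t}]=\E[B^N_{\tau_t}A_{\tau_t}]=0$ coming from the reflection symmetry $(B^N,B^T)\stackrel{d}{=}(B^N,-B^T)$ (which preserves $\tau_t$ and $B^N_{\tau_t}$ but flips $A_{\tau_t}$); and the tail bounds $\E[A_{\tau_t}^2]\le\E[\sup_{s\le t}A_s^2]=O(t^2)$ via Burkholder-Davis-Gundy together with $\E[|B^T_{\tau_t}|^p]=O(t^{p/2})$. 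These yield $\E[Y]=\sqrt{2t/\pi}+\tfrac14 H_{\pO,0}(s)t+O(t^{3/2})$ and $\tfrac12\E[Y^2]=t/2+O(t^{3/2})$, whence
\[
\int_0^\ep\p(Y\ge r)J_\Psi\,dr=\E[Y]-H_{\pO,0}(s)\cdot\tfrac12\E[Y^2]+O(t^{3/2})=\sqrt{\tfrac{2t}{\pi}}-\tfrac{H_{\pO,0}(s)\,t}{4}+O(t^{3/2}),
\]
and integration against $d\sigma_0(s)$ completes the proof.

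The main obstacle will be rigorously justifying the $O(t^{3/2})$ remainder uniformly in $s\in\pO$. The delicate point is bounding the truncation error $|\E[(Y\vee 0)\wedge\ep]-\E[Y]|\le\E[Y^-]+\E[(Y-\ep)^+]$: the second term is exponentially small by Lemma~\ref{lemma-X-TD-prime}(2), while the first requires care because $B^N_{\tau_t}$ and $h$ are correlated through $\tau_t$. Heuristically, $\{Y<0\}$ forces $B^N_{\tau_t}\le|h|=O(t)$, an event of probability $O(\sqrt{t})$ (the density of $B^N_{\tau_t}$ is bounded by $\sqrt{2/(\pi t)}$), and on this event $|Y|\le|h|=O(t)$, giving $\E[Y^-]=O(t^{3/2})$. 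Making this rigorous requires splitting on the size of $|h|$ (say $|h|\le Mt$ versus $|h|>Mt$) and invoking high-moment bounds $\E[|h|^p]=O(t^p)$ with $p$ chosen appropriately large; these moment bounds are available precisely because the truncated process \eqref{eq-x-prime} carries a bona fide L\'evy area $A_t$ with well-behaved moments rather than the singular remainder $R_t$ of \eqref{eq-sub-BM}. The same joint estimates control the cubic remainder of $h$ and the quadratic remainder of $J_\Psi$.
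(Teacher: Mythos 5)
Your proposal is correct and follows essentially the same route as the paper: after passing to the chart $\varphi_x$ and the parametrization $\Psi$, your truncated moment computation for $Y=B^N_{\tau_t}+h(B^T_{\tau_t},A_{\tau_t};s)$ is, via Fubini and inclusion--exclusion, exactly the paper's decomposition $I_1(t)-I_2(t)+I_3(t)$, and it uses the same ingredients ($\E[B^N_{\tau_t}]=\sqrt{2t/\pi}$, $\E[(B^N_{\tau_t})^2]=t$, $\E[(B^T_{\tau_t})^2]=t/2$, $\E[A_{\tau_t}]=0$, together with Lemmas \ref{lemma-J}, \ref{lemma-h-H} and \ref{lemma-joint}). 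You have also correctly isolated the one genuinely delicate remainder, $\E[Y^-]$, which is the term the paper calls $R_3(t)$ and bounds by $O(t^{3/2})$ in the appendix using the joint density of $(B^N_{\tau_t},\tau_t)$ and the bound $|h|\le C\bigl((B^T_{\tau_t})^2+|A_{\tau_t}|\bigr)$.
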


\begin{lemma}\label{lemma-main-II}
 Let $\Omega$, $\Omega_\ep$ and $x'_t$ be as previously defined. Then
 \begin{align*}
\int_{\Omega_\ep} \p_x(T'_\Omega\le\tau_t\le t, x'_{\tau_t}\in \Omega)dx=o(t).
\end{align*}
\end{lemma}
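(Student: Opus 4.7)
The strategy is to exploit a localization argument combined with the joint structure of the maximum $B^N_{\tau_t}$ and the transverse Brownian motion $B^T$ and Lévy area $A$. The event in question places two opposing constraints: the process must exit $\Omega$ before the argmax-time $\tau_t$ of $B^N$, yet at $\tau_t$ itself (which is the deepest push of the path in the normal direction) it must lie inside $\Omega$. This tension should force the path of $(B^T,A)$ to be atypically large, yielding smallness of the probability.

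\textbf{Step 1 (Localization).} Using the tail estimate \eqref{eq-X-s-est-prime} of Lemma~\ref{lemma-X-TD-prime}, restrict to the good event $\mathcal{G}_t=\{\sup_{s\le t}\|X_s\|^2\le t^{1-\alpha}\}$ for some small $\alpha>0$; the complement contributes at most $O(e^{-c/t^{\alpha'}})\Vol(\Omega)=o(t)$. Likewise, by \eqref{eq-X-TD-delta-2}, the part of $\Omega_\ep$ with $r:=d_{cc}(x,\pO)\ge t^{1/2-\alpha}$ contributes a super-polynomially small amount. On the remainder, parametrize by $x=\Psi(s,r)$ with $s\in\pO$ and $r\in(0,t^{1/2-\alpha}]$, using the Jacobian control from Lemma~\ref{lemma-J}.

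\textbf{Step 2 (Local decoding).} In the chart $\varphi_x$, with $X_s=(-B^N_s,B^T_s,A_s)$ and local boundary $-\xi=h(y,z;s)-r$ as in Lemma~\ref{lemma-h-H}, the event demands simultaneously:
\begin{equation*}
\exists\,u\le \tau_t:\ -B^N_u\ge r-h(B^T_u,A_u;s),\qquad -B^N_{\tau_t}<r-h(B^T_{\tau_t},A_{\tau_t};s).
\end{equation*}
Combining these with $B^N_{\tau_t}=\sup_{u\le t}B^N_u$, the maximum $B^N_{\tau_t}$ must lie in a window of width at most $h_{\max}:=\sup_{u\le t}|h(B^T_u,A_u;\cdot)|$, which on $\mathcal{G}_t$ is $\le C\|X\|_{\max}^2\le C t^{1-2\alpha}$. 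Conditioning on $(\tau_t, B^N_{\tau_t})=(\tau,m)$ via the joint density \eqref{eq-phi}, and using the independence of $B^T$ from $B^N$ together with the Taylor expansion from Lemma~\ref{lemma-h-H}, the exit constraint forces $\sup_{u\le \tau}|B^T_u|^2$ or $\sup_{u\le \tau}|A_u|$ to attain values of order at least $r-m$. Tail bounds on $\sup|B^T_u|$ (Gaussian, at scale $\sqrt{\tau}$) and on the Lévy area (at scale $\tau$, from Castell's asymptotics leading to \eqref{eq-R-1-est}) then yield a conditional probability bound of the form $C\exp(-c(r-m)/\tau)$.

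\textbf{Step 3 (Integration and main obstacle).} Integrating against $\Phi(m,\tau;t)$ produces
\begin{equation*}
\p_x(T'_\Omega\le\tau_t\le t,\ x'_{\tau_t}\in\Omega)
\le C\!\int_0^t\!\int_0^\infty e^{-c(r-m)/\tau}\,\frac{m\,e^{-m^2/(2\tau)}}{\pi\tau^{3/2}(t-\tau)^{1/2}}\,dm\,d\tau
+O(e^{-c/t^{\alpha'}}),
\end{equation*}
and one then integrates this bound over $r\in(0,t^{1/2-\alpha})$ and $s\in\pO$, expecting the result to be $o(t)$. The hard part I anticipate is precisely this last integration: the naive estimate using only $E[h_{\max}]=O(t)$ or the one-dimensional exit tail gives only $O(\sqrt{t})$ or $O(t)$, which would match rather than beat the order $\sqrt{t/\pi}\,\sigma_0(\pO)+O(t)$ in Lemma~\ref{lemma-main-I}. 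Extracting the strict $o(t)$ estimate demands exploiting both the narrow-window constraint on $B^N_{\tau_t}$ (width $\lesssim h_{\max}$) \emph{and} the decrease of $h$ between times $T'_\Omega$ and $\tau_t$ forced by Lemma~\ref{lemma-h-H} --- it is this conjunction, together with a careful convolution analysis of $\Phi(m,\tau;t)$ with the exponential re-entry bound, that I expect to yield the claimed cancellation.
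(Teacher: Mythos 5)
There is a genuine gap: the proposal sets up the problem and correctly identifies the tension in the event $\{T'_\Omega\le\tau_t\le t,\ x'_{\tau_t}\in\Omega\}$, but it stops exactly at the decisive estimate, and the ingredients you assemble do not combine to give $o(t)$. Your ``window'' observation gives, after integrating over $r$, the bound $\E\bigl[\bigl(\sup_{u\le\tau_t}(B^N_u+h(X_u))-B^N_{\tau_t}-h(X_{\tau_t})\bigr)^+\bigr]\le\E\bigl[\sup_{u\le\tau_t}h(X_u)-h(X_{\tau_t})\bigr]$, and since $h\approx\tfrac12 H_{\pO,0}(B^T)^2+k_1A$ this expectation is genuinely of order $t$ (not $o(t)$): with $B^T$ independent of $B^N$ there is no cancellation in $\E[\sup_u (B^T_u)^2-(B^T_{\tau_t})^2]\asymp t$. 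Your Step 3 bound $e^{-c(r-m)/\tau}$ is also only useful when $r-m$ is large positive; integrated in $r$ it contributes $\E[B^N_{\tau_t}+\tau_t]=O(\sqrt t)$, which is far too weak. Multiplying the two constraints is not legitimate as stated because they are not independent restrictions on disjoint randomness, and you do not supply the ``careful convolution analysis'' that you yourself flag as the hard part. So the proof is not complete; the key mechanism is missing, not merely its verification.

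The paper closes the gap differently: it applies the strong Markov property at the exit time $T'_\Omega$, restarting the process at the exit point $\sigma=x'_{T'_\Omega}$, and estimates $\phi_x(\sigma,u)=\p_\sigma(x'_{\tau_u}\in\Omega)$ for $u=t-T'_\Omega$. The geometric Lemma \ref{lemma-sigma-w} (a first-order Taylor expansion of $h$ at the exit point) shows that re-entry at the argmax time forces the running maximum $\beta^N_{\tau_u}$ of the restarted normal Brownian motion to be dominated by terms that are quadratic in $\beta^T_{\tau_u}$ plus terms of size $|h_y|,|\lambda_1|$; since the joint density \eqref{eq-phi} makes $\p(\beta^N_{\tau_u}\le\eta\mid\tau_u=\tau)\lesssim 1-e^{-\eta^2/\tau}\lesssim\eta^2/\tau$, this yields $\phi_x(\sigma,u)=O\bigl(t+\|\varphi_x^{-1}(\sigma)\|^2\bigr)$. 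This $O(t)$ re-entry factor, multiplied by $\p_x(T'_\Omega\le t)\le c'e^{-d_{cc}(x,\pO)^2/ct}$ and integrated over $\Omega_\ep$ (which costs $O(\sqrt t)$), gives $O(t^{3/2})$. That conditional $O(t)$ re-entry estimate after restarting at the boundary is precisely the idea absent from your sketch; without it, conditioning on the global argmax $\tau_t$ leaves you with an intractable bridge-type law for $B^N$ before $\tau_t$ and a bound that plateaus at $O(t)$.
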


\begin{lemma}\label{lemma-main-III}
Let $\Omega$, $\Omega_\ep$ and $x'_t$ be as previously defined. Then
\begin{align*}
\int_{\Omega_\ep} \p_x(\tau_t<T'_\Omega\le t)dx=o(t).
\end{align*}
\end{lemma}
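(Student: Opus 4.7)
The plan is to exploit the very constrained structure of $E_x := \{\tau_t < T'_\Omega \le t\}$: the process exits $\Omega$ strictly after the time $\tau_t$ at which $B^N$ achieves its running maximum on $[0,t]$. Working in the $\varphi_x^{-1}$-coordinates so that the boundary near $x$ has the local representation $\xi = r - h(y,z;s_0)$ from Lemma \ref{lemma-h-H}, with $r = d_{cc}(x,\pO)$ and $s_0$ the nearest boundary point, the $\xi$-coordinate of $X_s$ is a scalar multiple of $B^N_s$, so after time $\tau_t$ it cannot exceed its peak value $B^N_{\tau_t}$. Hence any exit during $(\tau_t,t]$ must be driven not by further growth of $B^N$ but by an increase of the curvature correction $h(y,z;s_0)$ along the post-$\tau_t$ trajectory, and this extra constraint is what should produce the $o(t)$ improvement over the naive bound \eqref{eq-X-TD-delta-2}.

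First I would truncate the atypical fluctuations. Fix $\alpha \in (0,\tfrac12)$; Lemma \ref{lemma-X-TD-prime}(1) implies that the complement of $G_t := \{\sup_{0 \le s \le t}\|X_s\|^2 \le t^{1-\alpha}\}$ has probability $O(\exp(-c/t^{\alpha'}))$, which is absorbed into the $o(t)$ target after integration over $\Omega_\ep$. On $G_t$, Lemma \ref{lemma-h-H} yields $|h(y(s),z(s);s_0)| \le K t^{1-\alpha}$ uniformly in $s \in [0,t]$, so on $E_x \cap G_t$ one necessarily has $B^N_{\tau_t} \ge r - K t^{1-\alpha}$.

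Next, I would condition on $(\tau_t, B^N_{\tau_t}) = (\tau, \xi)$ using the explicit density $\Phi$ from Lemma \ref{lemma-joint} and apply the strong Markov property at $\tau_t$. Writing $p_x(\xi,\tau)$ for the conditional probability of exit during $(\tau, t]$, this yields
\[
\p_x(E_x \cap G_t) \le \int_0^t \int_{r - Kt^{1-\alpha}}^\infty \Phi(\xi,\tau;t)\, p_x(\xi,\tau)\, d\xi\, d\tau.
\]
Because $B^N_s \le \xi$ on $[\tau, t]$, the event defining $p_x(\xi,\tau)$ forces the post-$\tau$ displacement of $(B^T,A)$ to increase the curvature term $h(y,z;s_0)$ by at least $r - \xi$; by the quadratic behavior of $h$ in $y$ and its linear behavior in $z$, controlled by $H_{\pO,0}(s_0)$ and $k_1(s_0)$, this is a quantitatively small event delivering an additional power of $t^{1/2}$ or better beyond the Gaussian tail of the $\Phi$-density. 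The final integration over $\Omega_\ep$ via the tubular parametrization of Lemma \ref{lemma-J} (whose Jacobian is $1+O(r)$) then reduces to a one-dimensional Gaussian integral $\int_0^\ep e^{-r^2/Ct}\,dr = O(\sqrt{t})$ multiplied by the extracted $p_x$-gain, and should yield the claimed $o(t)$ bound.

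The hard part will be Step 2: obtaining the quantitative gain in $p_x(\xi,\tau)$ uniformly in the relevant range of $(\xi,\tau)$. The natural tool is the L\'evy--Williams decomposition of Brownian motion at its maximum, which identifies the post-max trajectory of $B^N$ as an independent three-dimensional Bessel bridge, together with the independence of the increments of $(B^T,A)$ from the pair $(\tau_t, B^N_{\tau_t})$; these allow the curvature constraint to be reduced to a Gaussian-tail computation involving $B^T$ and the area process $A$, of the same flavor as the estimates underlying Lemma \ref{lemma-X-TD-prime}. The boundary layer $r \lesssim \sqrt{t}$, where the $\Phi$-density is $\Theta(1)$ rather than Gaussian-small, will require separate treatment, most likely via a cruder bound combined with the smallness of $\Vol(\{x \in \Omega : d_{cc}(x,\pO) < \sqrt{t}\}) = O(\sqrt{t})$.
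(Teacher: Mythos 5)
Your outline does follow the paper's strategy in broad strokes---decompose at the argmax time $\tau_t$, use the joint density $\Phi$ of $(B^N_{\tau_t},\tau_t)$, describe the post-maximum piece of $B^N$ as a Brownian meander, and observe that an exit during $(\tau_t,t]$ must be produced by growth of the curvature term $h$ rather than of $B^N$. But the step you defer to ``Step 2'' is where essentially all of the work lies, and your own power counting does not close. You budget the conditional exit probability $p_x(\xi,\tau)$ as gaining ``an additional power of $t^{1/2}$'' over the Gaussian tail; multiplied by the layer integral $\int_0^\ep e^{-r^2/Ct}\,dr=O(\sqrt{t})$ this gives only $O(t)$, which is \emph{not} $o(t)$. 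To conclude one needs a gain of $o(\sqrt{t})$, and the paper in fact extracts a gain of order $t^{1-\eta}$. That gain comes from two separate factors of $\sqrt{t}$: (i) the probability that the meander $M^N$ ever dips below the moving barrier $2\gamma+2a|M^T_\tau|_*+2b|M^T_\tau|_*^2$ is bounded by $C(a+b)(1+\log^+(1/x_0))e^{-C'x_0^2}$ (Lemma \ref{lemma-meander}, adapted from van den Berg--Gilkey), with $a,b=O(\sqrt{t})$ in mean square; and (ii) the integration over the initial deficit $\gamma$ (equivalently over $r$), after the substitution $\gamma\to x_0$, contributes a Jacobian $a+2bx_0=O(\sqrt{t})$ rather than an $O(1)$ amount. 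Neither factor is ``a Gaussian-tail computation of the same flavor as the estimates underlying Lemma \ref{lemma-X-TD-prime}'': when the process sits essentially on the tangent position at time $\tau_t$ (deficit $\gamma\approx 0$), a Gaussian tail bound on the increment of $h$ gives probability $\Theta(1)$, whereas the meander-versus-barrier estimate still gives $O(a+b)$, because the meander escapes from $0$ at rate $\sqrt{\tau}$ while the barrier grows only like $a\sqrt{\tau}+b\tau$.

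Two further points. First, your fallback for the layer $r\lesssim\sqrt{t}$---a crude bound times $\Vol(\{x:d_{cc}(x,\pO)<\sqrt{t}\})=O(\sqrt{t})$---can at best give $O(t)$ and so cannot rescue the estimate there; the paper applies the meander estimate uniformly in $r$. Second, the constraint is not simply that the increment of $h$ exceeds $r-\xi$ (a quantity which can be negative); the exit condition at time $v$ couples the meander to the barrier via $\sqrt{u}\,M^N_\tau\le \xi-r+h(B^T_v,A_v)$, and the increment of the area process contributes cross terms such as $\sqrt{u}(M^N_\tau B^T_{\tau_t}-M^T_\tau B^N_{\tau_t})$ and $2u\int_0^\tau M^N_s\,dM^T_s$ that must be absorbed into the barrier coefficients $a,b$; this requires the separate tail estimate of Lemma \ref{lemma-MN-MT} comparing $\int_0^\tau M^N_s\,dM^T_s$ with the running maximum $|M^T_\tau|_*$. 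Without these ingredients the argument does not produce $o(t)$.
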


\section{Proofs of the lemmas}\label{sec:proof}

In this final section, we prove Lemmas \ref{lemma-main-I}, \ref{lemma-main-II}, and \ref{lemma-main-III}. First let us recall notation. Let  $x'_t=\exp_x\left( -B^N_tN+B^T_tT+ A_tZ\right)$ be the truncated diffusion process, $X_t=(-B^N_t, B^T_t, A_t)$ the lift of $x'_t$ on $T_x\fH$, and $T'_\Omega=\inf\{t>0\,|\, {x}'_t\in \fH \setminus \Omega \}$ the exit time of $x'_t$ from $\Omega$.

To streamline the exposition, we defer the proof of several technical estimates in Subsection \ref{subsec:proof-of-first-lemma} to an appendix.

\subsection{Proof of Lemma \ref{lemma-main-I}}\label{subsec:proof-of-first-lemma}

From now on we denote $|(B^T_{\tau_t},A_{\tau_t})|=|B^T_{\tau_t}|^2+|A_{\tau_t}|$. Since $|(B^T_{\tau_t},A_{\tau_t})|\le ||X_{\tau_t}||^2$, from Lemma \ref{lemma-X-TD-prime} we know that for any $\delta>0$ ,
\[
\p_x(|(B^T_{\tau_t},A_{\tau_t})|>\delta)= O(e^{-\delta^2/4t}).
\]
Moreover, since
\begin{equation*}\begin{split}
\p_x(x'_{\tau_t}\in \Omega, |(B^T_{\tau_t},A_{\tau_t})|<\delta)
&\le \p_x(x'_{\tau_t}\in \Omega) \\
&\le\p_x(x'_{\tau_t}\in \Omega, |(B^T_{\tau_t},A_{\tau_t})|<\delta)+\p_x(|(B^T_{\tau_t},A_{\tau_t})|>\delta)
\end{split}\end{equation*}
we obtain that
\[
\p_x(x'_{\tau_t}\in \Omega)= \p_x(x'_{\tau_t}\in \Omega, |(B^T_{\tau_t},A_{\tau_t})|<\delta)+O(e^{-\delta^2/4t}).
\]
Letting $E(t)=\int_{\Omega_\ep}  \p_x(x'_{\tau_t}\in \Omega, |(B^T_{\tau_t},A_{\tau_t})|<\delta)dx$, we are reduced to prove
\begin{equation}\label{eq-E(t)}
E(t)=  \Vol(\Omega_\ep)-\sqrt{\frac{2t}{\pi}}\sigma_0(\partial \Omega)+\frac{t}{4}\int_{\partial \Omega}H_{\pO,0}(s)d\sigma_0(s)+ O(t^{3/2}).
\end{equation}
For fixed $x\in\Omega_\ep$, assume $d_{cc}(x,\partial\Omega)=r>0$. When $\ep>0$ is small enough, we can always assume that $x'_t$ started from $x\in\Omega_\ep$ stays inside the diffeomorphism neighborhood $\mathcal{O}_x$ of $\varphi_x$ within small time $t$. Hence we can consider the lifted process $X_t$. By comparing $\{x'_{\tau_t}\in \Omega, |(B^T_{\tau_t},A_{\tau_t})|<\delta\}$ and $\{B^N_{\tau_t}<r, |(B^T_{\tau_t},A_{\tau_t})|<\delta\}$ we have
\begin{equation*}\begin{split}
\p_x(x'_{\tau_t}\in \Omega, |(B^T_{\tau_t},A_{\tau_t})|<\delta)
&= \p_x(B^N_{\tau_t}<r, |(B^T_{\tau_t},A_{\tau_t})|<\delta) \\
& \quad - \p_x(B^N_{\tau_t}<r,x'_{\tau_t}\not\in \Omega,  |(B^T_{\tau_t},A_{\tau_t})|<\delta) \\
& \qquad + \p_x(B^N_{\tau_t}>r,x'_{\tau_t}\in \Omega,  |(B^T_{\tau_t},A_{\tau_t})|<\delta).
\end{split}\end{equation*}
We denote
\begin{align*}
&I_1(t)=\int_{\Omega_\ep} \p_x(B^N_{\tau_t}<r,|(B^T_{\tau_t},A_{\tau_t})|<\delta)dx, \\
&I_2(t)=\int_{\Omega_\ep} \p_x(B^N_{\tau_t}<r,x'_{\tau_t}\not\in \Omega, |(B^T_{\tau_t},A_{\tau_t})|<\delta)dx,\\
&I_3(t)=\int_{\Omega_\ep} \p_x(B^N_{\tau_t}>r,x'_{\tau_t}\in \Omega, |(B^T_{\tau_t},A_{\tau_t})|<\delta)dx.
\end{align*}
Then $E(t)=I_1(t)-I_2(t)+I_3(t)$. We estimate these terms in the following three steps. \\

\noindent {\bf Step 1:} First, let us estimate $I_1(t)$. Using the parametrization $\Psi$ from Lemma \ref{lemma-J}, we have
\[
I_1(t)=\int_0^\ep \int_{\partial \Omega} \p_x(B^N_{\tau_t}<r,|(B^T_{\tau_t},A_{\tau_t})|<\delta)J_\Psi(s,r)d\sigma_0(s)dr,
\]
where $x = \Psi(s,r)$. Furthermore, since
\[
 \p_x(B^N_{\tau_t}<r,|(B^T_{\tau_t},A_{\tau_t})|<\delta)=1- \p_x(|(B^T_{\tau_t},A_{\tau_t})|>\delta)- \p_x(B^N_{\tau_t}>r,|(B^T_{\tau_t},A_{\tau_t})|<\delta)
\]
and $\p_x(|(B^T_{\tau_t},A_{\tau_t})|>\delta)= O(e^{-\frac{\delta^2}{4t}})$,
we have
\begin{equation}\label{eq-I-1-J}
I_1(t)=\Vol(\Omega_\ep)-\int_0^\ep \int_{\partial \Omega} \p_x(B^N_{\tau_t}>r,|(B^T_{\tau_t},A_{\tau_t})|<\delta)J_\Psi(s,r)d\sigma_0(s)dr+o(t).
\end{equation}
Let $J(t)=\int_0^\ep\int_{\partial \Omega} \p_x(B^N_{\tau_t}>r,|(B^T_{\tau_t},A_{\tau_t})|<\delta)(1-r H_{\pO,0}(s))d\sigma_0(s)dr$. There exists $c>0$ depending on $\delta>0$ such that
\begin{equation*}\begin{split}
J(t)&=\int_0^\ep\int_{\partial \Omega} \p_x(B^N_{\tau_t}>r)(1-r H_{\pO,0}(s))d\sigma_0(s)dr+O(e^{-\frac{c}{t}})\\
&=  \int_0^\infty\int_{\partial \Omega} \p_x(B^N_{\tau_t}>r)(1-r H_{\pO,0}(s))d\sigma_0(s)dr-R_1(t)+ O(e^{-\frac{c}{t}}),
\end{split}\end{equation*}
where $R_1(t)=\int_\ep^\infty\int_{\partial \Omega} \p_x(B^N_{\tau_t}>r)(1-r H_{\pO,0}(s))d\sigma_0(s)dr$.
By Lemma \ref{lemma-joint} we can compute
\[
 \int_0^\infty\int_{\partial \Omega} \p_x(B^N_{\tau_t}>r)d\sigma_0(s)dr=\sigma_0(\partial \Omega) \int_0^\infty\frac{\sqrt{2}}{\sqrt{\pi t}}\int_r^{\infty} e^{-\frac{\xi^2}{2t}}d\xi dr=\frac{\sqrt{2t}}{\sqrt{\pi}}\sigma_0(\partial \Omega)
\]
and similarly
\[
 \int_0^\infty\int_{\partial \Omega} \p_x(B^N_{\tau_t}>r)r H_{\pO,0}(s)d\sigma_0(s)dr=\frac{t}{2}\int_{\partial \Omega} H_{\pO,0}(s)d\sigma_0(s).
\]
Therefore we have
\[
J(t)=\frac{\sqrt{2t}}{\sqrt{\pi}}\sigma_0(\partial \Omega)-\frac{t}{2}\int_{\partial \Omega} H_{\pO,0}(s)d\sigma_0(s)-R_1(t)+O(e^{-\frac{c}{t}}).
\]
Plugging this into \eqref{eq-I-1-J} yields
\[
I_1(t)=\Vol(\Omega_\ep)-\frac{\sqrt{2t}}{\sqrt{\pi}}\sigma_0(\partial \Omega)+\frac{t}{2}\int_{\partial \Omega} H_{\pO,0}(s)d\sigma_0(s)+R_1(t)+R_2(t)+O(e^{-\frac{c}{t}})
\]
where
\[
R_2(t)=\int_0^\ep \int_{\partial \Omega} \p_x(B^N_{\tau_t}>r,|(B^T_{\tau_t},A_{\tau_t})|<\delta)(1-J_\Psi(s,r)-r H_{\pO,0}(s))d\sigma_0(s)dr.
\]
Now we are left to estimate $R_1(t)$ and $R_2(t)$. Note when $r\ge \ep$ we have
\[
\bigg| 1-rH_{\pO,0}(s)\bigg| \le r^2|\ep^{-2}+K\ep^{-1}|,
\]
where $K=\max_{s\in\partial\Omega}|H_{\pO,0}(s)|$. Hence
\begin{equation*}\begin{split}
R_1(t) &= \frac{\sqrt{2}}{\sqrt{\pi t}}\int_\ep^\infty \int_r^{\infty} e^{-\frac{\xi^2}{2t}}d\xi\int_{\partial \Omega} (1-r H_{\pO,0}(s))d\sigma_0(s)  dr \\
&\le \frac{C_\ep}{\sqrt{t}} \sigma_0(\pO) \int_0^\infty \int_r^{\infty}r^2 e^{-\frac{\xi^2}{2t}}d\xi\, dr =O(t^{3/2}).
\end{split}\end{equation*}
For $R_2(t)$, by  \eqref{eq-jacobi} we know that
$
\bigg|1-J_\Psi(s,r)-rH_{\pO,0}(s) \bigg|\le K_1 r^2
$ for $r\in (0,\ep)$,
hence
\[
|R_2(t)|
\le \frac{\sqrt{2}K_1}{\sqrt{\pi t}} \sigma_0(\pO) \int_0^\infty\int_r^\infty r^2e^{-\frac{\xi^2}{2t}}d\xi dr=O(t^{3/2}).
\]
At the end we obtain
\begin{equation}\label{eq-I-1}
I_1(t)=\Vol(\Omega_\ep)-\frac{\sqrt{2t}}{\sqrt{\pi}}\sigma_0(\partial \Omega)+\frac{t}{2}\int_{\partial \Omega} H_{\pO,0}(s)d\sigma_0(s)+O(t^{3/2}).
\end{equation}

\noindent {\bf Step 2:} We are left to show that
$$
-I_2(t)+I_3(t)=-\frac{t}{4}\int_{\partial \Omega} H_{\pO,0}(s)d\sigma_0(s)+O(t^{3/2}).
$$
By changing coordinates we have
\begin{align*}
I_2(t)&=\int_0^\ep\int_{\partial \Omega} \p_x(B^N_{\tau_t}<r,x'_{\tau_t}\not\in \Omega, |(B^T_{\tau_t},A_{\tau_t})|<\delta)J_\Psi(s,r)d\sigma_0(s)dr.
\end{align*}
We claim that
\begin{equation}\label{eq-claim-A5}
\int_0^\ep\int_{\partial \Omega} \p_x(B^N_{\tau_t}<r,x'_{\tau_t}\not\in \Omega, |(B^T_{\tau_t},A_{\tau_t})|<\delta)(1-J_\Psi(s,r))d\sigma_0(s)dr=O(t^{3/2})
\end{equation}
and
\begin{equation}\label{eq-claim-A3}
\int_\ep^\infty\int_{\partial \Omega} \p_x(B^N_{\tau_t}<r,x'_{\tau_t}\not\in \Omega, |(B^T_{\tau_t},A_{\tau_t})|<\delta)d\sigma_0(s)dr=O(t^{3/2}).
\end{equation}
Estimates \eqref{eq-claim-A5} and \eqref{eq-claim-A3} are proved in sections \ref{App-claim-A5} and \ref{App-claim-A3} respectively. Then we have
\begin{align*}
I_2(t)&=\int_0^\infty \int_{\partial \Omega} \p_x(B^N_{\tau_t}<r,x'_{\tau_t}\not\in \Omega,  |(B^T_{\tau_t},A_{\tau_t})|<\delta)d\sigma_0(s)dr+O(t^{3/2}).
\end{align*}
Using the coordinate system in \eqref{eq-cartesian} and \eqref{eq-NTZ-cor} we have
\[
\{x'_{\tau_t}\not\in \Omega\}=\{h(B^T_{\tau_t}, A_{\tau_t};s)>r-B^N_{\tau_t}\},
\]
thus by Fubini we obtain
\begin{align}\label{eq-I2}
I_2(t)&=\int_0^\infty \int_{\partial \Omega} \p_x(B^N_{\tau_t}<r,h(B^T_{\tau_t}, A_{\tau_t};s)>r-B^N_{\tau_t},  |(B^T_{\tau_t},A_{\tau_t})|<\delta)d\sigma_0(s)dr+O(t^{3/2})\nonumber\\
&=\int_{\partial \Omega}  \,\E_x\left(h^+(B^T_{\tau_t},A_{\tau_t};s)\mathbbm{1}_{\{|(B^T_{\tau_t},A_{\tau_t})|<\delta\} }\right)\,d\sigma_0(s)
 +O(t^{3/2}),
\end{align}
where $h^+(y,z;s)=\int_0^\infty\mathbbm{1}_{\{h(y,z;s)>r\}}dr$ is the positive part of $h(y,z;s)$.\\

\noindent {\bf Step 3:} Now consider $I_3(t)$. Note
\begin{align*}
I_3(t)&=\int_0^\ep\int_{\partial \Omega} \p_x(B^N_{\tau_t}>r,x'_{\tau_t}\in  \Omega, |(B^T_{\tau_t},A_{\tau_t})|<\delta)d\sigma_0(s) dr\\
&-\int_0^\ep\int_{\partial  \Omega} \p_x(B^N_{\tau_t}>r,x'_{\tau_t}\in  \Omega, |(B^T_{\tau_t},A_{\tau_t})|<\delta)(1-J_\Psi(s,r))d\sigma_0(s)dr.
\end{align*}
We claim that
\begin{equation}\label{eq-minor-I3-1}
\int_0^\ep\int_{\partial  \Omega} \p_x(B^N_{\tau_t}>r,x'_{\tau_t}\in  \Omega, |(B^T_{\tau_t},A_{\tau_t})|<\delta)(1-J_\Psi(s,r))d\sigma_0(s)dr=O(t^{3/2}),
\end{equation}
and
\begin{equation}\label{eq-minor-I3-2}
\int_\ep^\infty\int_{\partial \Omega} \p_x(B^N_{\tau_t}>r,x'_{\tau_t}\in \Omega, |(B^T_{\tau_t},A_{\tau_t})|<\delta)d\sigma_0(s)dr=O(t^{3/2}).
\end{equation}
Estimates \eqref{eq-minor-I3-1} and \eqref{eq-minor-I3-2} are proved in sections \ref{App-minor-I3-1} and \ref{App-minor-I3-2} respectively.
Therefore by  \eqref{eq-cartesian} and \eqref{eq-NTZ-cor} we have
\begin{align*}
I_3(t)&=\int_0^\infty\int_{\partial \Omega} \p_x(B^N_{\tau_t}>r,x'_{\tau_t}\in \Omega, |(B^T_{\tau_t},A_{\tau_t})|<\delta)d\sigma_0(s)dr+O(t^{3/2})\\
&=\int_0^\infty \int_{\partial \Omega} \p_x(B^N_{\tau_t}>r,h(B^T_{\tau_t}, A_{\tau_t};s)<r-B^N_{\tau_t}, |(B^T_{\tau_t},A_{\tau_t})|<\delta)d\sigma_0(s)dr+O(t^{3/2})
\end{align*}
Let $h^-(y,z;s)=|h(y,z;s)| - h^+(y,z;s)$ be the negative part of $h(y,z;s)$, then by Fubini,
\[
I_3(t)=\int_{\partial \Omega}  \,\E_x\left(\min\left(h^-(B^T_{\tau_t},A_{\tau_t};s),B^N_{\tau_t}\right)\mathbbm{1}_{\{|(B^T_{\tau_t},A_{\tau_t})|<\delta\} }\right)\,d\sigma_0(s)
 +O(t^{3/2})
\]
Note $\min\left(h^-(B^T_{\tau_t},A_{\tau_t};s),B^N_{\tau_t}\right)=h^-(B^T_{\tau_t},A_{\tau_t};s)+\min\left(B^N_{\tau_t}-h^-(B^T_{\tau_t},A_{\tau_t};s),0\right)$, hence
\[
I_3(t)=\int_{\partial \Omega}  \,\E_x\left(h^-(B^T_{\tau_t},A_{\tau_t};s)\mathbbm{1}_{\{|(B^T_{\tau_t},A_{\tau_t})|<\delta\} }\right)\,d\sigma_0(s)+R_3(t)
 +O(t^{3/2}),
\]
where
\begin{align*}
R_3(t)&=\int_{\partial \Omega}  \,\E_x\left(\min\left(B^N_{\tau_t}-h^-(B^T_{\tau_t},A_{\tau_t};s),0\right)\mathbbm{1}_{\{|(B^T_{\tau_t},A_{\tau_t})|<\delta\} }\right)\,d\sigma_0(s).
\end{align*}
We claim that
\begin{align}\label{eq-C-4}
|R_3(t)|=O(t^{3/2});
\end{align}
see section \ref{App-C-4} for a proof. At the end  we have
\begin{align}\label{eq-I3}
I_3(t)&=\int_{\partial \Omega}  \,\E_x\left(h^-(B^T_{\tau_t},A_{\tau_t};s)\mathbbm{1}_{\{|(B^T_{\tau_t},A_{\tau_t})|<\delta\} }\right)\,d\sigma_0(s) +O(t^{3/2}).
\end{align}

Now by combining \eqref{eq-I2} and \eqref{eq-I3} we obtain
\begin{align*}
-I_2(t)+I_3(t)&=-\int_{\partial \Omega}  \,\E_x\left(h(B^T_{\tau_t},A_{\tau_t};s)\mathbbm{1}_{\{|(B^T_{\tau_t},A_{\tau_t})|<\delta\} }\right)\,d\sigma_0(s)
+O(t^{3/2})\\
&=-\int_{\partial \Omega}  \,\E_x\left(\frac12 {H_{\pO,0}(s)}(B^T_{\tau_t})^2+k_1(s)A_{\tau_t}\right)\,d\sigma_0(s)+C_1(t)+C_2(t)+O(t^{3/2})
\end{align*}
where
\[
C_1(t)=\int_{\partial \Omega}  \,\E_x\left(\left(\frac12{H_{\pO,0}(s)}(B^T_{\tau_t})^2+k_1(s)A_{\tau_t}-h(B^T_{\tau_t},A_{\tau_t};s)\right)\mathbbm{1}_{\{|(B^T_{\tau_t},A_{\tau_t})|<\delta\} }\right)\,d\sigma_0(s)
\]
and
\[
C_2(t)=\int_{\partial \Omega}  \,\E_x\left(\left(\frac12{H_{\pO,0}(s)}(B^T_{\tau_t})^2+k_1(s)A_{\tau_t}\right)\mathbbm{1}_{\{|(B^T_{\tau_t},A_{\tau_t})|\ge\delta\} }\right)\,d\sigma_0(s)\]
We claim that for sufficiently small $\eta>0$,
\begin{align}\label{eq-C}
C_1(t)=O(t^{3/2-\eta}), \quad C_2(t)=O(t^{3/2-\eta}).
\end{align}
Estimate \ref{eq-C} will be proved in section \ref{App-C}. Then we have
\begin{align*}
-I_2(t)+I_3(t)&=-\bigg(\int_{\partial \Omega} \frac12{H_{\pO,0}(s)} d\sigma_0(s)\bigg)\cdot\E\left((B^T_{\tau_t})^2\right)-\bigg(\int_{\partial \Omega}k_1(s) d\sigma_0(s)\bigg) \cdot\E\left(A_{\tau_t}\right)+o(t).
\end{align*}
Moreover, since
\[
\E\left((B^T_{\tau_t})^2\right)=\int_0^t\E\left((B^T_{\tau_t})^2|\tau_t=\tau\right)\p(\tau_t=d\tau)=\int_0^t \tau\, \p(\tau_t=d\tau)=\frac{t}{2}
\]
and
\[
\E\left(A_{\tau_t}\right)=\E\bigg(-B^N_{\tau_t}B^T_{\tau_t}+2\int_0^{\tau_t}B_s^NdB_s^T\bigg)=0,
\]
we obtain
\begin{equation}\label{I2I3-final}
-I_2(t)+I_3(t)=-\frac{t}{4}\int_{\partial \Omega}H_{\pO,0}(s) d\sigma_0(s)+O(t^{3/2}).
\end{equation}
Combining \eqref{I2I3-final} with \eqref{eq-I-1} we have
\[
E(t)=I_1(t)-I_2(t)+I_3(t)=\Vol(\Omega_\ep)-\sqrt{\frac{2t}{\pi}}\sigma_0(\partial \Omega)+\frac{t}{4}\int_{\partial \Omega}H_{\pO,0}(s)d\sigma_0(s)+ O(t^{3/2}),
\]
which completes the proof of \eqref{eq-E(t)}.

\subsection{Proof of Lemma \ref{lemma-main-II}}

\begin{lemma}\label{lemma-sigma-w}
Let $\Omega$ and $\Omega_\ep$ be as previously defined, and recall that $\pO$ is locally parameterized by a function $h$ as in \eqref{eq-NTZ-cor}. There is a constant $K>0$ so that
for any $\delta<\ep$
and any $x\in \Omega_\delta$, $\sigma\in\partial \Omega$, and $w\in\fH$ such that
\begin{itemize}
\item $d_{cc}(\sigma, x)<\delta$ and $d_{cc}(w,x)<\delta$,
\item the estimate
\begin{equation}\label{eq-w-lambda}
\lambda_1-w_1\le h_y(\lambda_2,\lambda_3;s)(w_2-\lambda_2)
-K\left(|w_2-\lambda_2|^2+|w_3-\lambda_3|\right),
\end{equation}
holds, where $\varphi_x^{-1} (\sigma)=(-\lambda_1,\lambda_2,\lambda_3)$ and $\varphi_x^{-1} (w)=(-w_1, w_2, w_3)$,
\end{itemize}
then the following conclusions hold:
\begin{itemize}
\item[(1)] $w\not\in \Omega$,
\item[(2)] $|h_y(\lambda_2, \lambda_3;s)|\le K (|\lambda_2|+|\lambda_3|)$.
\end{itemize}
\end{lemma}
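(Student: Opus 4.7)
The plan is to attack the two conclusions separately, with part (2) serving as the essential input for part (1). Throughout, the fixed base point is the nearest boundary point $s$ to $x$ (used to erect the frame $N,T,Z$ and the parametrization $\varphi_x$), so that $\pO\cap\mathcal O_x$ is the graph $-\xi=h(y,z;s)-r$ with $h(0,0;s)=0$ and $r=d_{cc}(x,\pO)$.

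For part (2), I would start from the fact, recorded in the proof of Lemma \ref{lemma-h-H}, that $d\varphi_x(\partial_y)=T(s)$ lies in $T_s(\pO)$, which forces $h_y(0,0;s)=0$. Since $h$ is $C^\infty$ in $(y,z)$ and depends continuously on $s\in\pO$, Taylor expanding $h_y(\cdot,\cdot;s)$ about the origin gives
\[
h_y(\lambda_2,\lambda_3;s)=h_{yy}(0,0;s)\,\lambda_2+h_{yz}(0,0;s)\,\lambda_3+O(\lambda_2^2+\lambda_3^2),
\]
with $h_{yy}(0,0;s)=H_{\pO,0}(s)$ and $h_{yz}(0,0;s)$ both bounded uniformly on compact $\pO$. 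For $(\lambda_2,\lambda_3)$ sufficiently small (which is guaranteed by $d_{cc}(\sigma,x)<\delta$ via Lemma \ref{lemma-equi-dist}), the quadratic remainder is dominated by $(|\lambda_2|+|\lambda_3|)$, and we can absorb all constants into a single $K$, yielding (2).

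For part (1), I would translate both the assumption $\sigma\in\pO$ and the desired conclusion $w\notin\Omega$ into statements about $h$ and the first coordinate. The graph equation $-\xi=h(y,z;s)-r$ applied at $\sigma$ gives $\lambda_1=h(\lambda_2,\lambda_3;s)-r$, while the side $\Omega$ containing $x=\varphi_x(0,0,0)$ is locally the set $\{-\xi<r-h(y,z;s)\}$; consequently $w\notin\Omega$ reduces, after elimination of $r$, to the scalar inequality
\[
h(w_2,w_3;s)-h(\lambda_2,\lambda_3;s)\;\ge\;w_1-\lambda_1.
\]
The right-hand side is controlled by the hypothesis, while the left-hand side is analyzed by Taylor expanding $h$ about $(\lambda_2,\lambda_3)$:
\[
h(w_2,w_3;s)-h(\lambda_2,\lambda_3;s)=h_y(\lambda_2,\lambda_3;s)(w_2-\lambda_2)+h_z(\lambda_2,\lambda_3;s)(w_3-\lambda_3)+\tfrac12 h_{yy}(\lambda_2,\lambda_3;s)(w_2-\lambda_2)^2+\text{err}.
\]
The term $h_y(\lambda_2,\lambda_3;s)(w_2-\lambda_2)$ matches the corresponding term on the right-hand side of the hypothesis exactly; the remaining contributions $h_z(w_3-\lambda_3)$, $\tfrac12 h_{yy}(w_2-\lambda_2)^2$, and the cubic/quadratic remainder $\mathrm{err}$ are each bounded in absolute value by a uniform multiple of $|w_2-\lambda_2|^2+|w_3-\lambda_3|$. (Here the Heisenberg weighting from Lemma \ref{lemma-equi-dist} is critical: since the parabolic quasi-norm $\|\varphi_x^{-1}(\cdot)\|$ treats the $z$-coordinate with weight $2$, the linear-in-$(w_3-\lambda_3)$ term and the mixed $|w_2-\lambda_2||w_3-\lambda_3|$ term are both of the correct order.) Choosing $K$ larger than the supremum of $|h_z|$, $\tfrac12|h_{yy}|$, and the remainder constant over the relevant compact region then produces the required inequality and yields $w\notin\Omega$.

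The main obstacle will be the careful bookkeeping needed to verify that the Taylor remainder and every lower-order correction is swallowed by the single slack parameter $K(|w_2-\lambda_2|^2+|w_3-\lambda_3|)$, uniformly in $s\in\pO$. This uniformity follows from continuity of $h(\cdot,\cdot;s)$ and its derivatives as functions of $s$, together with compactness of $\pO$; part (2) is the crucial tool that ensures $h_y$ is appropriately small at the base point, preventing the linear-in-$(w_2-\lambda_2)$ correction from spoiling the estimate.
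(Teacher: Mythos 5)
Your proposal follows essentially the same route as the paper: part (2) from $h_y(0,0;s)=0$ plus a uniform Taylor/compactness bound, and part (1) by Taylor expanding $h$ about $(\lambda_2,\lambda_3)$, matching the $h_y(\lambda_2,\lambda_3;s)(w_2-\lambda_2)$ term against the hypothesis and absorbing the remainder into $K(|w_2-\lambda_2|^2+|w_3-\lambda_3|)$. The only blemish is a sign slip in your displayed target for (1): using $\lambda_1=r-h(\lambda_2,\lambda_3;s)$ and the convention that $w\notin\Omega$ means $h(w_2,w_3;s)\ge r-w_1$, the inequality to prove is $h(w_2,w_3;s)-h(\lambda_2,\lambda_3;s)\ge \lambda_1-w_1$ (not $w_1-\lambda_1$), which is indeed what your subsequent comparison with \eqref{eq-w-lambda} actually establishes.
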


\begin{proof}
Since $\sigma \in \pO$ is sufficiently close to $x$, we know that
\begin{equation}\label{lambdas}
-\lambda_1=h(\lambda_2,\lambda_3;s)-r.
\end{equation}
Using the Taylor expansion of $h$ we have
\begin{align*}
h(w_2, w_3;s)&=h(\lambda_2,\lambda_3;s)+h_y(\lambda_2,\lambda_3;s)(w_2-\lambda_2) 
+R(s,w', \lambda'),
\end{align*}
where $\lambda'=(\lambda_2,\lambda_3)$ and $w'=(w_2,w_3)$. Since $\pO$ is $C^3$ the remainder term can be bounded uniformly:
\[
|R(s,w', \lambda')|\le \frac{1}{2}K(|w_2-\lambda_2|^2+|w_3-\lambda_3|)
\]
for a suitable choice of $K$.

We first verify (1). It follows from the preceding estimates that
\[
h(w_2, w_3;s)\ge h(\lambda_2,\lambda_3;s)+h_y(\lambda_2,\lambda_3;s)(w_2-\lambda_2)
-\frac12K(|w_2-\lambda_2|^2+|w_3-\lambda_3|).
\]
Together with \eqref{eq-w-lambda} and \eqref{lambdas} this then implies that $h(w_2,w_3;s)\ge -w_1+r$, namely $w\not\in\Omega$.

(2) is an easy consequence of the fact that $h$ is $C^2$ and $h_y(0,0;s)=0$.
\end{proof}

\begin{proof}[Proof of Lemma \ref{lemma-main-II}]
We denote by $\sigma\in\partial \Omega$ the exit point $x'_{T'_{\Omega}}$, and let $\p_\sigma$ be the probability measure of the Markov process $x'_t$ started from $\sigma$. Recalling the notation $X_t=\varphi_x^{-1}(x'_t)$, we estimate
\begin{align*}
\p_x(T'_\Omega\le\tau_t\le t, x'_{\tau_t}\in \Omega)
&\le \p_x(|X_{T'_\Omega}|>\delta)+\p_x(T'_\Omega\le\tau_t\le t, |X_{T'_\Omega}|<\delta, x'_{\tau_t}\in \Omega)\\
&\le \p_x(T'_\Omega\le\tau_t\le t, |X_{T'_\Omega}|<\delta, x'_{\tau_t}\in \Omega)+O(t^{3/2}).
\end{align*}
For fixed $\sigma\in\partial \Omega$, $u\ge0$, let $\phi_{x}( \sigma,u)$ be the probability that the process $x'_t$ started from $\sigma$ has farthest achievement along the horizontal normal direction $N$ up to time $u$ inside $\Omega$, that is,
\[
\phi_{x}( \sigma,u)=\p_\sigma(x'_{\tau_u}\in \Omega), \quad \tau_u=\inf{_\sigma}\lbrace\tau: B^N_\tau=\sup_{0\le v\le u}B^N_v\rbrace.
\]
Under $\p_x$, we have $\tau_u=\inf\lbrace\tau: B^N_\tau=\sup_{T'_\Omega\le v\le u}B^N_v\rbrace.$
Note $T'_\Omega\le\tau_t\le t$ means that  $x'_t$ hits $\partial \Omega$ before $B^N_t$ achieve its maximum, hence
\begin{align}\label{eq-T-X-D}
\p_x(T'_\Omega\le\tau_t\le t, |X_{T'_\Omega}|<\delta, x'_{\tau_t}\in \Omega)
\le \E_x\left({\mathbbm{1}}_{\{T'_\Omega\le t, |X_{T'_\Omega}|<\delta\}}\phi_{x}( x'_{T'_\Omega}, t-T'_\Omega) \right).
\end{align}
Using the same notation as in Lemma \ref{lemma-sigma-w}, for $\sigma\in\partial \Omega$, $d_{cc}(\sigma,x)<\delta$, we have $-\lambda_1=h(\lambda_2,\lambda_3;s)-r$, that is
\[
\varphi_x^{-1}(\sigma)=(h(\lambda_2,\lambda_3;s)-r, \lambda_2,\lambda_3).
\]
Also, under $\p_\sigma$ we can write
\begin{align*}
X_{\tau_u}=\varphi_x^{-1}(x'_{\tau_u})
=(-\lambda_1-\beta^N_{\tau_u}, \lambda_2+\beta^T_{\tau_u}, \lambda_3+\mathcal{A}_{\tau_u}),
\end{align*}
where $\beta^N$ and $\beta^T$ are independent standard Brownian motions under $\p_\sigma$, and $\mathcal{A}_{\tau_u}=-\beta^N_{\tau_u}\lambda_2+\beta^T_{\tau_u}\lambda_1-\beta^T_{\tau_u}\beta^N_{\tau_u}+2\int_0^{\tau_u}\beta^N_sd\beta^T_s$.
By Lemma \ref{lemma-sigma-w} we know that if $x'_{\tau_u}\in \Omega$ then
\begin{align*}
-\beta^N_{\tau_u}&\ge h_y(\lambda_2,\lambda_3;s)\beta^T_{\tau_u}-K\left(|\beta^T_{\tau_u}|^2+|\mathcal{A}_{\tau_u}|\right).
\end{align*}
Hence for $\sigma=x'_{T'_\Omega}$, $u=t-T'_\Omega$,
$$
\phi_x(\sigma, u)=\p_\sigma(x'_{\tau_u}\in \Omega) \le\p_\sigma\bigg(\beta^N_{\tau_u}\le -h_y(\lambda_2,\lambda_3;s) \beta^T_{\tau_u}+K\left(|\beta^T_{\tau_u}|^2+|\mathcal{A}_{\tau_u}|\right) \bigg).
$$
Since $\int_0^{\tau_u}\beta^N_sd\beta^T_s$ is a martingale, it can be written as a time changed Brownian motion $\beta_{\int_0^{\tau_u}(\beta_s^N)^2ds}$ where $\beta$ is an independent standard Brownian motion. Hence there exists $C>0$ such that
\begin{equation*}\begin{split}
\p_\sigma\bigg(\bigg|\int_0^{\tau_u}\beta^N_sd\beta^T_s \bigg|>\beta^N_{\tau_u}{\tau_u}^{1/2-\eta}\bigg)
&=\p_\sigma\bigg(|\beta_1|^2\int_0^{\tau_u}(\beta^N_s)^2ds >(\beta^N_{\tau_u})^2{\tau_u}^{1-2\eta}\bigg) \\
&\le \p_\sigma\bigg(|\beta_1|^2\cdot{\tau_u} >{\tau_u}^{1-2\eta}\bigg) \\
&= \p_\sigma\bigg(|\beta_1|>\frac{1}{t^{\eta}}\bigg) =O(e^{-C/t^{2\eta}})
\end{split}\end{equation*}
for some $\eta\in(0,1/2)$. Therefore we know that
\begin{align*}
&\phi_x(\sigma, u)\le
 \p_\sigma\bigg(\beta^N_{\tau_u}\big(1-K|\lambda_2|-K|\beta^T_{\tau_u}|-2K\tau_u^{1/2-\eta}\big)\le |h_y(\lambda_2,\lambda_3;s)||\beta^T_{\tau_u}|
+K|\lambda_1||\beta^T_{\tau_u}|\\
& \qquad \quad +K |\beta^T_{\tau_u}|^2
 \bigg)+O(t^{3/2}),
\end{align*}
Hence we have
\begin{align*}
&\phi_x(\sigma, u)\le
 \p_\sigma\bigg(\frac{1}{2}\beta^N_{\tau_u}\le |h_y(\lambda_2,\lambda_3;s)||\beta^T_{\tau_u}|
+K|\lambda_1||\beta^T_{\tau_u}|+K |\beta^T_{\tau_u}|^2
 \bigg)\\
&\qquad \quad  +\p_\sigma\left(1-K|\lambda_2|-K|\beta^T_{\tau_u}|-2K\tau_u^{1/2-\eta}<\frac{1}{2}\right)+O(t^{3/2}).
\end{align*}
Recall $d_{cc}(\sigma,x)<\delta$ implies $|\lambda_2|<\delta$. When $\delta>0$ is small enough such that $\delta\le\frac{1}{8K}$, and when $t$ is small enough such that $2Kt^{1/2-\eta}<1/4$, in the set $\{||X_{T'_\Omega}||<\delta\}\cap\{T'_\Omega<t \}\cap\{\tau_u<t\}$ we have
\begin{equation*}\begin{split}
\p_\sigma\left(1-K|\lambda_2|-K|\beta^T_{\tau_u}|-2K\tau_u^{1/2-\eta}<\frac{1}{2}\right)
&\le\p_\sigma\left(K|\lambda_2|+K|\beta^T_{\tau_u}|>\frac{1}{4}\right) \\
&\le \p_\sigma\left(|\beta^T_{\tau_u}|_*>\frac{1/4-K\delta}{K}\right) \\
&\le  \p_\sigma\left(|\beta^T_t|_*>\frac{1}{8K}\right)=O(e^{-C_K/t})
\end{split}\end{equation*}
for some $C_K>0$, where $|\beta^T|_*$ is the running maximum of $|\beta^T|$. Hence we have in the set $\{||X_{T'_\Omega}||<\delta\}\cap\{T'_\Omega<t \}\cap\{\tau_u<t\}$,
\begin{equation*}\begin{split}
\phi_x(\sigma, u)=\p_\sigma(x'_{\tau_u}\in \Omega)
&\le \p_\sigma\bigg(\beta^N_{\tau_u}/2\le |h_y(\lambda_2,\lambda_3;s)||\beta^T_{\tau_u}|
+K|\lambda_1||\beta^T_{\tau_u}|+K |\beta^T_{\tau_u}|^2  \bigg)+O(t^{3/2})\\
&= \int_0^ud\tau\int_{-\infty}^\infty  \frac{1}{\sqrt{2\pi\tau}}e^{-\frac{y^2}{2\tau}} dy \int_{\{0<\xi< 2F(y,\tau) \}}  \Phi(\xi, \tau;u)d\xi+O(t^{3/2})\\
&=  \int_0^ud\tau\int_{-\infty}^\infty  \frac{1}{\pi\sqrt{\tau}\sqrt{u-\tau}}\left(1- e^{\frac{1}{\tau}F(y,\tau)}\right)  \frac{1}{\sqrt{2\pi\tau}}e^{-\frac{y^2}{2\tau}} dy+O(t^{3/2})
\end{split}\end{equation*}
where
\[
F(y,\tau)=\bigg(|h_y(\lambda_2,\lambda_3;s)||y|+K|\lambda_1||y|+K |y|^2\bigg)^2.
\]
Since
\[
1-e^{\frac{1}{\tau}F(y,\tau)}
\le
\frac{C_{K'}}{\tau}\left(y^4+|h_y|^2|y|^2+|y|^2\lambda_1^2\right)
\]
for some $C_{K'}>0$, we have for $\sigma=x'_{T'_\Omega}$, $u=t-T'_\Omega$, that
\begin{align*}
\phi_x(\sigma, u)
&\le\int_0^ud\tau\int_{-\infty}^\infty \frac{C_{K'}}{\pi{\tau}^{3/2}\sqrt{u-\tau}}\left(y^4+|h_y|^2|y|^2+|y|^2\lambda_1^2\right)  \frac{1}{\sqrt{2\pi\tau}}e^{-\frac{y^2}{2\tau}} dy+O(t^{3/2})\\
&=C_{K'}\int_0^u \frac{3\sqrt{\tau}+| h_y|^2\tau^{-1/2}+\tau^{-1/2}\lambda_1^2}{\sqrt{u-\tau}} d\tau+O(t^{3/2})\\
&\le C'\bigg(u +|h_y|^2+\lambda_1^2\bigg)+O(t^{3/2}) \\
&\le C'' \left(t+K^2(|\lambda_2|^2+|\lambda_3|)+\lambda_1^2\right)+O(t^{3/2})
\end{align*}
The last inequality is due to Lemma \ref{lemma-sigma-w}. Plugging back into  \eqref{eq-T-X-D} we obtain
\begin{align*}
&\p_x(T'_\Omega\le\tau_t\le t, x'_{\tau_t}\in \Omega) \\
&\quad \le  \E_x\left({\mathbbm{1}}_{\{T'_\Omega\le t, ||X_{T'_\Omega}||<\delta\}}C''  \left(t+K^2(|B^T_{T'_\Omega}|^2+|A_{T'_\Omega}|)+|B^N_{T'_\Omega}|^2\right)  \right)+O(t^{3/2})\\
&\quad \le  \E_x\left({\mathbbm{1}}_{\{T'_\Omega\le t\}} \left(C'' t+C''K^2\left(\sup_{0\le s\le t}|B^T_{s}|^2+\sup_{0\le s\le t}|A_{s}|\right)+C''\sup_{0\le s\le t}|B^N_{s}|^2\right)  \right)+O(t^{3/2})\\
&\quad \le C'' \p_x(T'_\Omega\le t)t+O(t^{3/2}) \\
& \qquad \quad + C'''\E_x\left({\mathbbm{1}}_{\{T'_\Omega\le t\}}\right)^{1/2}\left[\left(\E_x\left(\sup_{0\le s\le t}|B^T_{s}|^4\right)\right)^{1/2}+\left(\E_x\left(\sup_{0\le s\le t}|A_{s}|^2\right) \right)^{1/2}+\left(\E_x\left(\sup_{0\le s\le t}|B^N_{s}|^4\right)\right)^{1/2}\right]
\end{align*}
By Doob's maximal inequality, for $i=N, T$,
\[
\E_x\left(\sup_{0\le s\le t}|B^i_{s}|^4\right)\le (4/3)^4\E_x(|B_t^i|^4)=4^43^{-3}t^2
\]
and
\begin{align*}
\E_x\left(\sup_{0\le s\le t}|A_{s}|^2\right)\le 4\E_x(|A_t|^2)=4t^2.
\end{align*}
Therefore we obtain
\begin{align*}
&\p_x(T'_\Omega\le\tau_t\le t, x'_{\tau_t}\in \Omega)
\le C''\p_x(T'_\Omega\le t) t+C'''\p_x\left(T'_\Omega\le t\right)^{1/2}t+O(t^{3/2})
\end{align*}
From Lemma \ref{lemma-X-TD-prime} we know there exists $C^*,C_2, c>0$ such that
\begin{align*}
& \int_{\Omega_\ep} \p_x(T'_\Omega\le\tau_t\le t, x'_{\tau_t}\in \Omega)dx \\
& \quad \le C^*\int_{\Omega_\ep}\left(te^{-\frac{d_{cc}(x,\pO)^2}{ct}} +te^{-\frac{d_{cc}(x,\pO)^2}{2ct}}\right)dx+O(t^{3/2})\\
& \quad =C^* \int_{0}^\ep \int_{\partial \Omega}\left(te^{-\frac{r^2}{ct}} +te^{-\frac{r^2}{2ct}}\right)(1-H_{\pO,0}(s)r)d\sigma_0(s)dr+O(t^{3/2}) \\
& \quad \le C_2t^{3/2}.
\end{align*}
Hence we obtain Lemma \ref{lemma-main-II}.
\end{proof}

\subsection{Proof of Lemma \ref{lemma-main-III}}
Our main task is to estimate $\p_x(\tau_t<T'_\Omega\le t)$, namely the probability of $x'_t$
\begin{itemize}
\item
remaining inside $\Omega$ up to its furthest excursion along the outward horizontal normal direction $-N$ with in time $t$, and
\item
exiting $\Omega$ after the ``maximum excursion" along $-N$ before $t$.
\end{itemize}
Again, we deal with the lifted process on the tangent space.
Let $X_t=(-B^N_t, B^T_t, A_t)$ be the Markov process as given in \eqref{eq-X-NTZ}. For any $w\in\mathcal{O}_x$, we denote $\varphi_x^{-1}(w)=(-w_1, w_2, w_3)$. From \eqref{eq-NTZ-cor} we know  that $w\in  \Omega$ if $w_1<r-h(w_2,w_3;s)$ and  $w\not\in \Omega$ if $w_1>r-h(w_2,w_3;s)$. By Lemma \ref{lemma-h-H} we can then conclude that there exists a $\delta\in(0, \ep)$,
$$
w_1<r-\frac{1}{2}H_{\pO,0}(s)w_2^2-k_1(s)w_3+\frac{1}{\delta^2}|(w_2,w_3)|^3
$$
if $w\in  \Omega$, while
$$
w_1\ge r-\frac{1}{2}H_{\pO,0}(s)w_2^2-k_1(s)w_3-\frac{1}{\delta^2}|(w_2,w_3)|^3
$$
if $w\not\in  \Omega$. Hence in probabilistic language we have
\[
\p_x(\tau_t<T'_\Omega\le t, \sup_{0\le s\le t}||X_s||\le \delta)\le D(s,r,t),
\]
where
\begin{align*}
 D(s,r,t)&=\p_x\bigg( B^N_{\tau_t}<r-\frac{1}{2}H_{\pO,0}(s) |B^T_{\tau_t}|^2-k_1(s) A_{\tau_t}+\frac{1}{\delta^2}|( B^T_{\tau_t}, A_{\tau_t})|^3,\\
& \exists v\in [\tau_t,t]:  B^N_{v}\ge r-\frac{1}{2}H_{\pO,0}(s) |B^T_{v}|^2-k_1(s) A_{v}-\frac{1}{\delta^2}|( B^T_{v}, A_{v})|^3\bigg) \, .
\end{align*}
\begin{proof}[Proof of Lemma \ref{lemma-main-III}]
By changing coordinates, we have
\begin{equation}\label{eq-int-p}
\int_{\Omega_\ep} \p_x(\tau_t<T'_\Omega\le t)dx\le(1+K'\ep)\int_0^\ep\int_{\partial \Omega}\p_{(s,r)}(\tau_t<T'_\Omega\le t)d\sigma_0(s)dr,
\end{equation}
where $K'=\max_{s\in \partial \Omega}|H_{\pO,0}(s)|+K_1$ and $K_1$ is as in Lemma \ref{lemma-J}. For fixed $s\in \partial \Omega$ we want to bound
$\int_0^\ep\p_{(s,r)}(\tau_t<T'_\Omega\le t)dr$.
From Lemma \ref{lemma-X-TD-prime} we know that there exists $C>0$ such that
\begin{align}\label{eq-P-D}
\p_x(\tau_t<T'_\Omega\le t)\le  \p_x\left(\sup_{0\le s\le t}||X_s||\ge \delta\right)+D(s,r,t)\le D(s,r,t)+Ct^{3/2}.
\end{align}
The rest of proof is then devoted into the estimate of $D(s,r, t)$.

For each $v\in [\tau_t,t]$, introduce a parameter $\tau = \tau(v) \in[0,1]$ such that $v=\tau_t+\tau(t-\tau_t)$, and let $(M^N_\tau, M^T_\tau,M^{A}_\tau)$ be given as
\begin{align*}
& M^N_\tau=\frac{B^N_{\tau_t}-B^N_{\tau_t+\tau(t-\tau_t)}}{\sqrt{t-\tau_t}},\quad M^T_\tau=\frac{B^T_{\tau_t+\tau(t-\tau_t)}-B^T_{\tau_t}}{\sqrt{t-\tau_t}}.
\end{align*}
Clearly $M^N_\tau$ is a Brownian meander process. Due to independence of $B^T$ and $(B^N,\tau_t)$ we know that $M^T_\tau$ is an independent standard Brownian motion process. We have
\[
X_v=X_{\tau_t}+\bigg(\sqrt{t-\tau_t}M^N_\tau,  \sqrt{t-\tau_t}M^T_\tau , A_{\tau_t+\tau(t-\tau_t)}-A_{\tau_t} \bigg),
\]
where
$$
A_{\tau_t+\tau(t-\tau_t)}-A_{\tau_t}= -\sqrt{t-\tau_t}\left(M_\tau^NB_{\tau_t}^T-M_\tau^TB_{\tau_t}^N \right)-(t-\tau_t)M^N_\tau M^T_\tau+2(t-\tau_t)\int_0^\tau M_s^NdM_s^T.
$$

Let $\chi_t(\xi, y, z, u)$ be the density function of $(B^N_{\tau_t}, B^T_{\tau_t}, A_{\tau_t},t-\tau_t)$, then
\begin{align*}
&D(s,r,t)=\int_0^\infty d\xi\int_{-\infty}^{\infty}dy \int_{-\infty}^\infty dz \int_0^t du\, \chi_t(\xi, y, z, u){\mathbbm{1}}_{\xi-r+\frac{1}{2}H_{\pO,0}(s)|y|^2+k_1(s)z<\frac{1}{\delta^2}|(y,z)|^3} \\
&\quad \cdot\p_x\bigg( \exists \tau\in [0,1]: \xi-\sqrt{u}M^N_{\tau}\ge r-\frac{1}{2}H_{\pO,0}(s)|y+\sqrt{u}M^T_{\tau}|^2 -k_1(s) \bigg(z-\sqrt{u}(M^N_\tau y-M^T_\tau \xi)-uM^N_\tau M^T_\tau\\
&\qquad +2u\int_0^\tau M_s^NdM_s^T\bigg)
-\frac{1}{\delta^2}\bigg|\bigg(y+\sqrt{u}M^T_{\tau},z-\sqrt{u}(M^N_\tau y-M^T_\tau \xi)-{u}M^N_\tau M^T_\tau+2u\int_0^\tau M_s^NdM_s^T\bigg)\bigg|^3\bigg).
\end{align*}
Here recall the notation $|(y,z)|=\sqrt{y^2+|z|}$. Furthermore since $|(y_1+y_2,z_1+z_2)|^3\le 8|(y_1,z_1)|^3+8|(y_2,z_2)|^3$ for any $y_1, y_2, z_1, z_2 \in\R$, we have
\begin{align}\label{eq-D-W}
 D(s,r,t)&\le\int_0^\infty d\xi\int_{-\infty}^{\infty}dy  \int_{-\infty}^\infty dz \int_0^t du\, \chi_t(\xi, y, z, u){\mathbbm{1}}_{\xi-r+\frac{1}{2}H_{\pO,0}(s)|y|^2+k_1(s) z<\frac{1}{\delta^2}|(y,z)|^3}\nonumber \\
& \quad \cdot\p_x\bigg( \exists \tau\in [0,1]: \xi-\sqrt{u}M^N_{\tau}\ge r-\frac{1}{2}H_{\pO,0}(s)|y|^2-k_1(s)z-\frac{8}{\delta^2} |(y,z)|^3-\sqrt{u}W(y,u,z,\tau)\bigg)
\end{align}
where
\begin{align*}
&W(y,u,z,\tau)=H_{\pO,0}(s)|M^T_{\tau}||y|+\frac{1}{2}H_{\pO,0}(s)\sqrt{u}|M^T_{\tau}|^2 \\
& \quad +k_1(s)\left(-(M^N_\tau y-M^T_\tau \xi)-\sqrt{u}M^N_\tau M^T_\tau+2\sqrt{u}\int_0^\tau M_s^NdM_s^T\right)\\
& \qquad +\frac{8}{\delta^2}{u}\bigg|\bigg(M^T_{\tau},-(M^N_\tau y-M^T_\tau \xi)-\sqrt{u}M^N_\tau M^T_\tau+2\sqrt{u}\int_0^\tau M_s^NdM_s^T\bigg)\bigg|^3.
\end{align*}
To estimate $W(y,u,z, \tau)$. First we prove the following lemma.

\begin{lemma}\label{lemma-MN-MT}
Let $M^N_\tau$ and $M^T_\tau$, $0\le \tau\le1$ be as before. Then for any $0<\eta<1/2$, there exists $\eta'>0$ such that for any $u\in[0,t)$,
\begin{equation}\label{eq-int-MT}
\p\bigg(\exists \tau\in[0,1],\,{u}^{2\eta}\bigg|\int_0^\tau M_s^NdM_s^T\bigg| >|M^T_\tau|_*\bigg)=O( e^{-c/t^{\eta'}})
\end{equation}
where $|M^T_\tau|_*=\sup_{0\le s\le \tau}|M^T_s|$ is the running maximum. Moreover we have
\[
\p\bigg(\exists \tau\in[0,1],\,{u}^{2\eta}\bigg|M^N_\tau M^T_\tau-2\int_0^\tau M_s^NdM_s^T\bigg| >|M^T_\tau|_*\bigg)=O( e^{-c/t^{\eta'}}).
\]
\end{lemma}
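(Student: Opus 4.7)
My plan is to use the Dambis--Dubins--Schwarz (DDS) time change to represent the stochastic integral $I_\tau := \int_0^\tau M^N_s\,dM^T_s$ as $I_\tau = \beta_{Q(\tau)}$, where $\beta$ is a standard Brownian motion and $Q(\tau) := \int_0^\tau (M^N_s)^2\,ds$ is its quadratic variation. The point is that on the high-probability event $\{|M^N|_*(1) \le R\}$ the clock satisfies $Q(\tau) \le R^2\tau$, so the stochastic integral inherits Gaussian-type tails controlled by $R$. Standard exponential estimates for the Brownian meander supremum give $\p(|M^N|_*(1) > R) \le C e^{-cR^2}$; choosing $R = t^{-\eta_0}$ for a small $\eta_0 > 0$ places this meander-cutoff error already in the desired $O(\exp(-c/t^{\eta'}))$ regime. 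Moreover, since $u \le t$ forces $u^{2\eta} \le t^{2\eta}$, the event in \eqref{eq-int-MT} is contained in the corresponding event with $t^{2\eta}$ in place of $u^{2\eta}$, so it suffices to bound a $u$-independent probability.

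To control $|I_\tau|/|M^T_\tau|_*$ jointly in $\tau \in [0,1]$, I would decompose dyadically into intervals $[2^{-k-1},2^{-k}]$, $k\ge 0$. On such an interval, $|M^T_\tau|_* \ge |M^T_{2^{-k-1}}|_*$, while on $\{|M^N|_*(1) \le R\}$ we have $|I_\tau| \le \sup_{s \le R^2 2^{-k}}|\beta_s|$. For each $k$, I would split the dyadic event by introducing a threshold $\theta_k$: either $|M^T_{2^{-k-1}}|_* < \theta_k$, whose probability is estimated by the density of the Brownian motion running maximum (of order $\theta_k 2^{k/2}$), or $\sup_{s \le R^2 2^{-k}}|\beta_s| > t^{-2\eta}\theta_k$, whose probability is bounded via the reflection principle by $C\exp(-ct^{-4\eta}\theta_k^2/(R^2 2^{-k}))$. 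Balancing these two contributions at each scale and summing over the (at most $O(\log(1/t))$) relevant dyadic scales would yield the claimed bound.

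The second claim is then a corollary of the first: on $\{|M^N|_*(1) \le R\}$ we have $|M^N_\tau M^T_\tau| \le R\,|M^T_\tau|_*$, so provided $u^{2\eta}R \le 1/2$ the event $\{u^{2\eta}|M^N_\tau M^T_\tau - 2I_\tau| > |M^T_\tau|_*\}$ forces $4u^{2\eta}|I_\tau| > |M^T_\tau|_*$, which is handled by the first claim modulo the exponentially small meander-tail error. The main obstacle I anticipate is the dyadic argument at the finest scales: the $\beta$-clock $R^2 2^{-k}$ is tiny while the typical value of $|M^T_{2^{-k-1}}|_*$ is of order $2^{-k/2}$, so $\theta_k$ must be chosen carefully to keep the exponent $t^{-4\eta}\theta_k^2/(R^2 2^{-k})$ a uniform (negative) power of $1/t$ across all relevant $k$, and one must verify that the union bound over those scales does not destroy the stretched-exponential rate. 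A secondary subtlety is that $\beta$ and $M^T$ are not independent, which is why the argument proceeds by sup-bounds through DDS rather than by joint Gaussian analysis.
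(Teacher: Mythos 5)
Your overall strategy --- the Dambis--Dubins--Schwarz representation $\int_0^\tau M^N_s\,dM^T_s=\beta_{Q(\tau)}$ with $Q(\tau)=\int_0^\tau(M^N_s)^2\,ds$, a dyadic decomposition in $\tau$, meander tail bounds, and the reduction of the second display to the first via $|M^N_\tau M^T_\tau|\le |M^N_1|_*\,|M^T_\tau|_*$ on a high-probability event --- is essentially the paper's, and your reductions (including uniformity in $u$ via $u\le t$) are sound. But the execution of the dyadic step has a genuine gap, which you partly flag but do not resolve. First, the small-ball estimate you invoke, $\p(|M^T_{2^{-k-1}}|_*<\theta_k)\lesssim\theta_k2^{k/2}$, is only polynomial in $\theta_k$; no choice of $\theta_k$ can balance it against the Gaussian tail of $\sup_{s\le R^22^{-k}}|\beta_s|$ to yield the claimed stretched-exponential bound (taking $\theta_k$ stretched-exponentially small makes the first term acceptable but then the exponent $t^{-4\eta}\theta_k^22^k/R^2$ stays bounded, so the second term is $O(1)$). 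You need the confinement estimate $\p(\sup_{s\le T}|B_s|<\theta)\le Ce^{-cT/\theta^2}$, which is what the paper uses (once, at unit scale). Second, and more seriously, even with that estimate the two per-scale errors become, after Brownian scaling, independent of $k$ once balanced; the event ranges over all $\tau\in(0,1]$, so the union bound runs over infinitely many dyadic scales --- not $O(\log(1/t))$ of them --- and a divergent sum of terms each of size $e^{-c/t^{\eta'}}$ proves nothing.

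The missing idea is that the crude bound $Q(\tau)\le R^2\tau$ on $\{|M^N_1|_*\le R\}$ discards a full factor of $\tau$: since the meander starts at $0$, one has $Q(\tau)\le\tau\,\bigl(\sup_{s\le\tau}M^N_s\bigr)^2$, and $\sup_{s\le\tau}M^N_s$ is of order $\sqrt{\tau}$ up to a sub-Gaussian deviation factor, so $Q(\tau)\lesssim\tau^2$ at scale $\tau\sim2^{-k}$ rather than $\lesssim\tau$. The resulting extra factor $2^{-k/2}$ in $|\beta_{Q(\tau)}|$ relative to $|M^T_\tau|_*\sim 2^{-k/2}$ is exactly what makes the per-scale probability decay geometrically in $k$ (like $e^{-c2^{k}/t^{\eta'}}$, after mildly $k$-dependent threshold choices), so the sum over all scales converges and is dominated by $k=0$. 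This is precisely what the paper exploits via the rescaled meander $m^{1/\tau}_s=\tau^{-1/2}M^N_{s\tau}$ and its Rayleigh tail, arriving at the summable bound $\sum_k 2e^{-4^k/(2^{k+1}u^{4\eta})}$. Without this refinement your scheme cannot close at the finest scales.
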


\begin{proof}
First note
\begin{align*}
&\p\bigg(\exists \tau\in[0,1],\,{u}^{2\eta}\bigg|M^N_\tau M^T_\tau-2\int_0^\tau M_s^NdM_s^T\bigg| >|M^T_\tau|_*\bigg)\\
& \quad \le \p\bigg(\exists \tau\in[0,1],\,{u}^{2\eta}|M^N_\tau M^T_\tau|_* >\frac12|M^T_\tau|_*\bigg) \\
& \qquad +\p\bigg(\exists \tau\in[0,1],\,2{u}^{2\eta}\bigg|\int_0^\tau M_s^NdM_s^T\bigg| >\frac12|M^T_\tau|_*\bigg)
\end{align*}
and
\[
\p\bigg(\exists \tau\in[0,1],\,{u}^{2\eta}|M^N_\tau |_* >\frac12\bigg)\le\p\bigg({u}^{2\eta}|M^N_1 |_* >\frac12\bigg)=O(e^{-1/2u^{2\eta}}).
\]
We just need to prove \eqref{eq-int-MT}. Since $\int_0^\tau M_s^NdM_s^T$ is a martingale, it can be written as a time changed Brownian motion, namely
\[
\int_0^\tau M_s^NdM_s^T=\beta_{\int_0^\tau (M^N_s)^2ds},
\]
where $\beta$ is an independent standard Brownian motion. Therefore
\begin{align*}
&\p\bigg(\exists \tau\in[0,1],\,{u}^{2\eta}\bigg|\int_0^\tau M_s^NdM_s^T\bigg| >|M^T_\tau|_*\bigg) \\
& \quad =\p\bigg(\exists \tau\in[0,1],\,{u}^{2\eta}\left(\int_0^\tau (M^N_s)^2ds\right)^{1/2} |\beta_1| >\sqrt{\tau}|M^T_1|_*\bigg)\\
& \qquad \le\p\bigg(\exists \tau\in[0,1],\,{u}^{\eta}\left(\int_0^\tau (M^N_s)^2ds\right)^{1/2} >\sqrt{\tau}\bigg)+\p\bigg(u^\eta|\beta_1|>|M^T_1|_*  \bigg).
\end{align*}
We easily observe that there exists $c, c', \eta'>0$ such that
\[
\p\bigg(u^\eta|\beta_1|>|M^T_1|_*  \bigg)\le \p(|\beta_1|>u^{-\eta/2})+\p(|M_1^T|_*<u^{\eta/2})\le c\left(e^{-\frac{c'}{t^{\eta'}}}\right).
\]
On the other hand, if we denote the Brownian meander of length $\frac{1}{\tau}$ by $m^{1/\tau}_s$, namely,
\[
m^{1/\tau}_s=\frac{1}{\sqrt{\tau}}M_{s\tau}^N, \quad s\in [0,1/\tau],
\]
then we know that $m^{1/\tau}_{1/\tau}$ is Rayleigh distributed with scale parameter $\frac{1}{\sqrt{\tau}}$, and hence
\[
\p\bigg(\left|m^{1/\tau}_{1/\tau}\right|_*>R\bigg)\le 2e^{-\frac{R^2\tau}{2}}
\]
for any $R>0$. This implies that for any $\tau\in[0,1]$
\[
\p\bigg(\left|m^{1/\tau}_{1/\tau}\right|_*>\frac{1}{u^\eta\tau}\bigg)\le 2e^{-\frac{1}{2\tau u^{2\eta}}}.
\]
Hence we have
\begin{align*}
&\p\bigg(\exists \tau\in[0,1],\,{u}^{\eta}\left(\int_0^\tau (M^N_s)^2ds\right)^{1/2} >\sqrt{\tau}\bigg) \\
& \quad =\p\bigg(\exists \tau\in[0,1],\,{u}^{\eta}\left(\tau^2\int_0^1 (m^{1/\tau}_s)^2ds\right)^{1/2} >\sqrt{\tau}\bigg)\\
& \qquad =\p\bigg(\exists \tau\in[0,1],\,{u}^{2\eta}\int_0^1 (m^{1/\tau}_s)^2ds>\frac{1}{\tau}\bigg) \\
& \qquad \quad \le \p\bigg(\exists \tau\in[0,1],\,{u}^{2\eta}\left|m^{1/\tau}_{1/\tau}\right|_*>\frac{1}{\tau}\bigg)
\end{align*}
and moreover
\begin{align*}
&\p\bigg(\exists \tau\in[0,1],\,{u}^{2\eta}\left|m^{1/\tau}_{1/\tau}\right|_*>\frac{1}{\tau}\bigg) \\
& \quad \le\sum_{k=0}^\infty\p\bigg(\exists \tau\in(2^{-(k+1)},2^{-k}],\,\left|m^{1/\tau}_{1/\tau}\right|_*>\frac{1}{\tau {u}^{2\eta}}\bigg)\\
& \qquad \le\sum_{k=0}^\infty \p\bigg(\exists \tau\in(2^{-(k+1)},2^{-k}],\,\left|m^{1/\tau}_{1/\tau}\right|_*>\frac{2^k}{ {u}^{2\eta}}\bigg) \le\sum_{k=0}^\infty 2e^{-\frac{4^k}{ 2^{k+1}u^{4\eta}}}= o(e^{-\frac{c}{u^{4\eta}}})
\end{align*}
for some $c>0$. Hence we complete the proof.
\end{proof}

Let $H=\max\{\max_{s\in\pO}(H_{\pO,0}(s)),0\}$ and $K'_1=\max\{\max_{s\in\pO}k_1(s),0\}$. 
From Lemma \ref{lemma-MN-MT}, for any small $\eta>0$, in the set $\{|M^T_\tau|_*<\delta u^{-1/4}\}\cap\{u^{2\eta}|M^N_\tau|_*\le1 \}$, there exist $c, \eta'>0$ such that
\begin{align*}
&\frac{8}{\delta^2}{u}\bigg|\bigg(M^T_{\tau},-(M^N_\tau y-M^T_\tau \xi)-\sqrt{u}M^N_\tau M^T_\tau+2\sqrt{u}\int_0^\tau M_s^NdM_s^T\bigg)\bigg|^3\\
&\quad\le
\frac{C}{\delta^2}u\bigg(|M^T_\tau|^3+|M^T_\tau \xi|^{3/2}+(M^N_\tau)^{3/2} |y|^{3/2}+u^{3/4-3\eta}|M_\tau^T|_*^{3/2}\bigg)\\
&\qquad \le \frac{C}{\delta^2}u |y|^{3/2}(M^N_\tau)^{3/2}+{C'}u^{3/4}|M^T_\tau|_*^2+C''\left(u^{7/8}|\xi|^{3/2}+u^{5/8-3\eta}\right)|M^T_\tau|_*\\
&\qquad \quad \le  C_\delta\bigg(u^{1-\eta} |y|^{3/2}M^N_\tau+u^{3/4}|M^T_\tau|_*^2+\left(u^{7/8}|\xi|^{3/2}+u^{5/8-3\eta}\right)|M^T_\tau|_*\bigg)
\end{align*}
where $C', C'', C_\delta>0$ are constants depend only on $\delta$. Then we have
\begin{align*}
W(y,u,z,\tau) &\le  \bigg(H|y|+K'_1|\xi|+3K'_1u^{1/2-2\eta}+C_\delta \left(u^{7/8}|\xi|^{3/2}+u^{5/8-3\eta}\right)\bigg)|M^T_{\tau}|_*\\
& \quad +\left(\frac{1}{2}H\sqrt{u}+C_\delta u^{3/4}\right)|M^T_{\tau}|_*^2+(-k_1(s)y+C_\delta u^{1-\eta} |y|^{3/2} ) M^N_\tau.
\end{align*}
We denote
$$
a=H|y|+K'_1|\xi|+3K'_1u^{1/2-2\eta}+C_\delta \left(u^{7/8}|\xi|^{3/2}+u^{5/8-3\eta}\right)
$$
and
$$
b=\frac{1}{2}H\sqrt{u}+C_\delta u^{3/4}.
$$
Hence in the set  $\{|M^T_\tau|_*<\delta u^{-1/4}\}\cap\{u^{2\eta}|M^N_\tau|_*\le1 \}\cap \{|-k_1(s)y+C_\delta u^{1-\eta} |y|^{3/2}|<1/2 \}$ 
it holds that
\[
W(y,u,z,\tau)\le a|M^T_{\tau}|_*+ b|M^T_{\tau}|_*^2+\frac{1}{2}M^N_\tau.
\]%
Let $\gamma=u^{-1/2}\left(\xi-r+\frac{1}{2}H_{\pO,0}(s)|y|^2+k_1(s)z+\frac{8}{\delta^2} |(y,z)|^3\right)$, then  from \eqref{eq-D-W} we have 
we have
\begin{align}\label{eq-int-psr}
 D(s,r,t) &\le \int_0^\ep dr\int_0^\infty d\xi\int_{-\infty}^{\infty}dy\int_{-\infty}^{\infty}dz  \int_0^t du\, \chi_t(\xi, y, z, u){\mathbbm{1}}_{\xi-r+\frac{1}{2}H_{\pO,0}(s)|y|^2+k_1(s)z<-\frac{1}{\delta^2}  |(y,z)|^3} \nonumber  \\
&\quad \cdot\bigg[\p_x\bigg( \exists \tau\in [0,1]: M^N_{\tau}\le 2\gamma +2a|M^T_\tau|_*+2b|M^T_\tau|_*^2 \bigg)+\p_x\bigg( \exists \tau\in [0,1]: |M^T_\tau|_*\ge\delta u^{-1/4} \bigg)  \nonumber\\
&\qquad+\p_x\bigg( \exists \tau\in [0,1]:  u^{2\eta}|M^N_\tau|_*>1\bigg) +\p_x\bigg( \exists \tau\in [0,1]: K_1'|M^T_t|_* +C_\delta t^{1-\eta}|M^T_t|_*^{3/2}\ge1/2\bigg)\bigg].
\end{align}
At last since
\begin{align*}
&\int_0^\ep dr\int_0^\infty d\xi\int_{-\infty}^{\infty}dy \int_{-\infty}^{\infty}dz \int_0^t du\, \chi_t(\xi, y, z, u){\mathbbm{1}}_{-\frac{8}{\delta^2} |(y,z)|^3\le\xi-r+\frac{1}{2}H_{\pO,0}(s)|y|^2+k_1(s)z<\frac{1}{\delta^2}|(y,z)|^3} \\
&\quad \le \int_0^\infty d\xi\int_{-\infty}^{\infty}dy \int_{-\infty}^{\infty}dz  \int_0^t du\, \chi_t(\xi, y, u)\frac{9}{\delta^2} |(y,z)|^3\\
&\qquad \le\frac{72}{\delta^2} \E\left(|B^T_{\tau_t}|^3+|A_{\tau_t}|^{3/2} \right)=\frac{72}{\delta^2}\E(|B_{\tau_1}|^3+|A_{\tau_1}|^{3/2})t^{3/2}=o(t^{3/2}).
\end{align*}
We obtain that
\[
\int_0^\ep D(s,r,t)dr\le A_1(s,t)+A_2(s,t)+A_3(s,t)+A_4(s,t)+o(t^{3/2}),
\]
where
\begin{align*}
A_1(s,t)
 &= \int_0^\ep dr\int_0^\infty d\xi\int_{-\infty}^{\infty}dy\int_{-\infty}^{\infty}dz  \int_0^t du\, \chi_t(\xi, y, z, u){\mathbbm{1}}_{\xi-r+\frac{1}{2}H_{\pO,0}(s)|y|^2+k_1(s)z<-\frac{8}{\delta^2} |(y,z)|^3}   \\
& \cdot\p_{(s,r)}\bigg( \exists \tau\in [0,1]: M^N_{\tau}\le 2\gamma +2a|M^T_\tau|_*+2b|M^T_\tau|_*^2 \bigg)\,,
 \end{align*}
 \begin{align*}
A_2(s,t)
 &= \int_0^\ep dr\int_0^\infty d\xi\int_{-\infty}^{\infty}dy\int_{-\infty}^{\infty}dz  \int_0^t du\, \chi_t(\xi, y, z, u){\mathbbm{1}}_{\xi-r+\frac{1}{2}H_{\pO,0}(s)|y|^2+k_1(s)z<-\frac{8}{\delta^2} |(y,z)|^3}   \\
& \cdot\p_{(s,r)}\bigg( \exists \tau\in [0,1]: |M^T_\tau|_*\ge\delta u^{-1/4} \bigg)\\
&\le \p\bigg(|M^T_1|_*\ge\delta t^{-1/4}\bigg)\le C'e^{-\frac{C}{t^{1/2}}}\,,
 \end{align*}
 \begin{align*}
A_3(s,t)
 &= \int_0^\ep dr\int_0^\infty d\xi\int_{-\infty}^{\infty}dy\int_{-\infty}^{\infty}dz  \int_0^t du\, \chi_t(\xi, y, z, u){\mathbbm{1}}_{\xi-r+\frac{1}{2}H_{\pO,0}(s)|y|^2+k_1(s)z<-\frac{8}{\delta^2} |(y,z)|^3}   \\
& \cdot\p_{(s,r)}\bigg( \exists \tau\in [0,1]: u^{2\eta}|M^N_\tau|_*>1 \bigg)\\
&\le \p\bigg( \exists \tau\in [0,1]: t^{2\eta}|M^N_\tau|_*>1 \bigg)=\p\bigg(|M^N_1|_*> t^{-2\eta} \bigg)\le 2e^{-\frac{1}{2t^{4\eta}}}\,,
 \end{align*}
  and
 \begin{align*}
A_4(s,t)
 &= \int_0^\ep dr\int_0^\infty d\xi\int_{-\infty}^{\infty}dy\int_{-\infty}^{\infty}dz  \int_0^t du\, \chi_t(\xi, y, z, u){\mathbbm{1}}_{\xi-r+\frac{1}{2}H_{\pO,0}(s)|y|^2+k_1(s)z<-\frac{8}{\delta^2} |(y,z)|^3}   \\
& {\mathbbm{1}}_{ |k_1(s)y+C_\delta u^{1-\eta} |y|^{3/2}|\ge1/2}\\
&\le \p\bigg( K_1'|M^T_t|_* +C_\delta t^{1-\eta}|M^T_t|_*^{3/2}\ge1/2 \bigg)\le \p\bigg( |M^T_t|_*\ge C\bigg)=O(e^{-c/t}).
 \end{align*}
We are left to show that $A_1(t)=O(t^{3/2-\eta})$ for some small $\eta>0$.
\begin{lemma}\label{lemma-meander}
There exist constants $C>0$, $C'>0$ that are independent of $a, b, \gamma$ such that
\[
\p\bigg( \exists \tau\in [0,1]: M^N_{\tau}\le 2\gamma +2a|M^T_\tau|_*+2b|M^T_\tau|_*^2 \bigg)\le C(a+b)(1+(\log^+(1/x_0))e^{-C'x_0^2},
\]
where $(\cdot)^+$ denotes the positive part and $x_0=\frac{-a+\sqrt{a^2-4\gamma b}}{2b}$.
\end{lemma}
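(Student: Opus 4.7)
Plan: The key is to exploit the independence of the Brownian meander $M^N$ and the Brownian motion $M^T$, conditioning on the latter to reduce the problem to a first-passage estimate for the meander against a (random but $M^T$-measurable) non-decreasing moving boundary.

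Step 1 (Reduction to a first-passage problem): Write $g(\tau) = 2\gamma + 2a Y_\tau + 2b Y_\tau^2$ with $Y_\tau = |M^T_\tau|_*$. Since $Y_\tau$ is non-decreasing and $g$ is increasing in $y \ge 0$, the path $g(\tau)$ is non-decreasing, with $g(\tau) < 0$ precisely when $Y_\tau < x_0$. Because $M^N_\tau \ge 0$, the event
\[
E = \bigl\{\exists\,\tau \in [0,1]:\ M^N_\tau \le g(\tau)\bigr\}
\]
is contained in $\{\eta \le 1\}$, where $\eta := \inf\{\tau : Y_\tau \ge x_0\}$, and any hitting time lies in $[\eta,1]$. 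A Taylor expansion at $x_0$ gives $g(Y_\tau) = 2D(Y_\tau - x_0) + 2b(Y_\tau - x_0)^2$ for $Y_\tau \ge x_0$, where $D := \sqrt{a^2 - 4\gamma b}$; note $D \le a + 2\sqrt{-\gamma b}$, which ultimately feeds the $(a+b)$ prefactor via $D \lesssim a + b x_0$.

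Step 2 (Meander density estimate): Using the explicit transition density of the Brownian meander---specifically the Rayleigh-type bound $p_\tau^{\mathrm{mean}}(y) \lesssim (y/\tau)\,e^{-y^2/(2\tau)}(1-\tau)^{-1/2}$---I would establish
\[
\p\bigl(M^N_\tau \le m\bigr) \le C\,\frac{m^2}{\tau(1-\tau)^{1/2}} \wedge 1,
\]
and then bound, for a fixed non-decreasing barrier $g$ with $g(\eta)=0$,
\[
\p\bigl(\exists\,\tau \in [\eta,1]:\ M^N_\tau \le g(\tau) \,\bigm|\, M^T\bigr)
\]
by a dyadic union bound over subintervals $[\eta+2^{-k-1}(1-\eta),\eta+2^{-k}(1-\eta)]$. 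On each dyadic piece the RHS is controlled by the value of $g$ at the right endpoint, and the sum telescopes up to a logarithmic factor in $1/(1-\eta)$ and in $1/x_0$.

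Step 3 (Integration against the hitting time): Finally, integrate the conditional estimate against the law of $(\eta, Y_\cdot)$. By the reflection principle applied to $|M^T|$, the density of $\eta$ is bounded by $\tfrac{C x_0}{\tau^{3/2}} e^{-x_0^2/(2\tau)}$; after $\eta$, the strong Markov property makes $Y_{\eta+s}-x_0$ the running maximum (over $[0,s]$) of a Brownian motion independent of $\mathcal F_\eta^{M^T}$, so $\sup_{\tau\in[\eta,1]}(Y_\tau-x_0)$ has sub-Gaussian tail with variance $\lesssim 1-\eta$. The Gaussian factor $e^{-C'x_0^2}$ emerges from the tail of $\eta$; the $(a+b)$ prefactor arises from $D(Y_\tau-x_0)+b(Y_\tau-x_0)^2 \lesssim (a+b)\cdot(\text{sub-Gaussian terms})$; and the $\log^+(1/x_0)$ factor is produced by $\int_0^1 \tau^{-1}e^{-x_0^2/(2\tau)}\,d\tau \asymp \log(1/x_0)$ for small $x_0$, together with the dyadic logarithm from Step 2.

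The main obstacle is executing Step 2 with the correct dependence on the proximity of $\tau$ to $1$ (the $(1-\tau)^{-1/2}$ factor from the meander density), and then matching the exponential scales in Step 3: combining the Gaussian tail of $\eta$ with the (moving, quadratic) meander crossing probability to obtain \emph{exactly} $e^{-C' x_0^2}$ in the final bound---without losing the sharp linear prefactor $(a+b)$---requires carefully identifying the worst-case value of $\eta \in (0,1]$ and absorbing residual polynomial-in-$x_0$ terms into the logarithmic factor.
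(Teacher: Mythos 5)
The paper does not prove this lemma from scratch: its entire proof is a citation to Lemma~4.3 of van den Berg--Gilkey \cite{vdbg:manifold}, together with the observation that the $(d{-}1)$-dimensional Bessel process appearing there can be replaced by $S_\tau=|M^T_\tau|_*$ because the only property used is the sub-Gaussian tail $\p(\sup_{\tau\le 1}S_\tau\ge\xi)\le Ce^{-\xi^2/8}$. Your proposal is therefore an attempt to reconstruct the cited argument, and its skeleton is the right one and matches the structure of that proof: the event forces $Y_\tau=|M^T_\tau|_*$ to reach the root $x_0$ at some time $\eta\le 1$; past $\eta$ the barrier factors as $2D(Y_\tau-x_0)+2b(Y_\tau-x_0)^2$ with $D=\sqrt{a^2-4\gamma b}=a+2bx_0$ (whence the $(a+b)$ prefactor after absorbing $bx_0e^{-C'x_0^2}$); and one conditions on $M^T$, estimates the first passage of the meander below this barrier, and integrates against the first-passage density of $\eta$, which supplies $e^{-C'x_0^2}$ and, via $\int_0^1\tau^{-1}e^{-x_0^2/2\tau}\,d\tau$, the logarithm.

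The execution, however, has concrete gaps. First, the meander marginal is wrong: the correct bound is $\p(M^N_\tau\in dy)\lesssim \tau^{-3/2}\,y\,e^{-y^2/2\tau}\min\bigl(1,y/\sqrt{1-\tau}\bigr)\,dy$, so $\p(M^N_\tau\le m)\lesssim\min\bigl(m^2\tau^{-3/2},\,m^3\tau^{-3/2}(1-\tau)^{-1/2}\bigr)$ rather than $m^2\tau^{-1}(1-\tau)^{-1/2}$; the powers of $\tau$ matter when you sum over dyadic scales adjacent to $\eta$. Second, a union bound using the marginal at the right endpoint of each dyadic interval does not control $\p(\exists\,\tau\in I:\ M^N_\tau\le g(\tau))$; you need a small-ball estimate for $\inf_{\tau\in I}M^N_\tau$, e.g.\ via the $h$-transform description of the meander after time $\eta$ (a Bessel(3)-type hitting bound $\p_y(\mbox{hit }c)\le c/y$), and it is precisely this hitting estimate that produces the linear prefactor. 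Third --- the crux you flag but do not resolve --- if one shortcuts by replacing the moving barrier on $[\eta,1]$ by its supremum, the resulting bound $\lesssim c/\sqrt{\eta}$ integrated against the density $\lesssim x_0\tau^{-3/2}e^{-x_0^2/2\tau}$ of $\eta$ yields $(a+b)x_0^{-1}e^{-x_0^2/2}$, which blows up as $x_0\to 0$; obtaining $\log^+(1/x_0)$ requires exploiting that the barrier vanishes at $\eta$ and grows only like $D\sqrt{\tau-\eta}$ there. So the plan is viable but is not yet a proof; the shortest correct route is the paper's, namely to verify that the van den Berg--Gilkey argument uses only nonnegativity, monotonicity and the Gaussian tail of the comparison process and then quote it.
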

\begin{proof}
The proof can be found in \cite{vdbg:manifold}, Lemma 4.3 by replacing a $d$-dimensional Bessel process with $S_\tau=\sup_{0\le u\le \tau}|M^T_u|$ where $M^T_u$ is a standard Brownian motion and noting that
\[
\p\bigg(\sup_{0\le \tau\le 1}S_\tau\ge \xi \bigg)=\p\bigg(\sup_{0\le u\le 1}|M^T_u|\ge \xi \bigg)\le Ce^{-\xi^2/8\tau}.
\]
\end{proof}
From the above lemma we have
 \begin{align*}
 A_1(t)
 &\le \int_0^\ep dr\int_0^\infty d\xi\int_{-\infty}^{\infty}dy\int_{-\infty}^{\infty}dz  \int_0^t du\, \chi_t(\xi, y, z, u){\mathbbm{1}}_{\xi-r+\frac{1}{2}H_{\pO,0}(s)|y|^2+k_1(s)z<-\frac{8}{\delta^2}|(y,z)|^3}   \\
& \cdot C(a+b)(a+\log^+(1/x_0))e^{-C'x_0^2}.
 \end{align*}
By the change of variables $r\to\gamma$ we obtain
 \begin{align*}
A_1(t)
 &\le \sqrt{t}\int_{-\infty}^0 d\gamma\int_0^\infty d\xi\int_{-\infty}^{\infty}dy\int_{-\infty}^{\infty}dz  \int_0^t du\, \chi_t(\xi, y, z, u)   C(a+b)(1+\log^+(1/x_0))e^{-C'x_0^2}.
 \end{align*}
 Since $\gamma=\frac{a^2-(a+2bx_0)^2}{4b}$, by the change of variables $\gamma\to x_0$ we have
  \begin{align*}
A_1(t) &\le C\sqrt{t}\int_0^\infty d\xi\int_{-\infty}^{\infty}dy\int_{-\infty}^{\infty}dz  \int_0^t du\, \chi_t(\xi, y, z, u)\\
 &\cdot  \int_0^\infty dx_0 (a+b)(a+2b x_0)(1+\log^+(1/x_0))e^{-C'x_0^2}\\
 &=C\sqrt{t}\int_0^\infty d\xi\int_{-\infty}^{\infty}dy\int_{-\infty}^{\infty}dz  \int_0^t du\, \chi_t(\xi, y, z, u) (C_1a(a+b)+2C_2b(a+b))
 \end{align*}
 where  $C_1= \int_0^\infty (1+\log^+(1/x))e^{-C'x^2}dx$ and  $C_2= \int_0^\infty x(1+\log^+(1/x))e^{-C'x^2}dx$ are positive constants.
Now since  for some $C, C'>0$ and $\eta>0$ small enough we have
 \begin{align*}
&\int_0^\infty d\xi\int_{-\infty}^{\infty}dy\int_{-\infty}^{\infty}dz  \int_0^t du\, \chi_t(\xi, y, z, u) a^2\\
&\le 2^5\left( H^2\E(|B^T_{\tau_t}|^2)+{K'_1}^2\E(|B^N_{\tau_t}|^2)+9{K'_1}^2t^{1-4\eta}+C_\delta^2t^{7/4}E(|B^N_{\tau_t}|^{3/2})+C_\delta^2t^{5/4-6\eta}\right)=Ct^{1-4\eta}+O(t),
 \end{align*}
 \begin{align*}
&\int_0^\infty d\xi\int_{-\infty}^{\infty}dy\int_{-\infty}^{\infty}dz  \int_0^t du\, \chi_t(\xi, y, z, u) ab\\
& \le
\frac{1}{2}H^2\E(|B^T_{\tau_t}|)(\sqrt{t}+o(t))+\frac{1}{2}HK'_1\E(|B^N_{\tau_t}|)(\sqrt{t}+o(t))+\frac{3}{2}{K'_1}Ht^{1-\eta}+o(t)=C'\,t^{1-2\eta}+O(t)
\end{align*}
and
\begin{align*}
&\int_0^\infty d\xi\int_{-\infty}^{\infty}dy\int_{-\infty}^{\infty}dz  \int_0^t du\, \chi_t(\xi, y, z, u) b^2\le
\frac{1}{4}H^2t+ o(t),
\end{align*}
we finally obtain that $A_1(s,t)=O(t^{3/2-\eta'})$ for $\eta'>0$ as small as we want. At the end from \eqref{eq-int-p} and \eqref{eq-P-D} we have
\begin{align*}
\int_{\Omega_\ep} \p_x(\tau_t<T'_\Omega\le t)dx&\le(1+K'\ep)\int_{\pO}\left(\int_0^\ep D(s,r,t)dr\right)d\sigma_0(s)+O(t^{3/2})\\
&\le(1+K'\ep)\,\sigma_0(\pO)\cdot O(t^{3/2-\eta'})
\end{align*}
Thus we complete the proof of Lemma \ref{lemma-main-III}.
\end{proof}
\appendix
\section{Appendix}\label{sec:appendix}

\subsection{Proof of equation \eqref{eq-claim-A5}}\label{App-claim-A5}
We want to show
\[
\int_0^\ep\int_{\partial \Omega} \p_x(B^N_{\tau_t}<r,x'_{\tau_t}\not\in \Omega, |(B^T_{\tau_t},A_{\tau_t})|<\delta)(1-J_\Psi(s,r))d\sigma_0(s)dr=O(t^{3/2}).
\]
By Lemma \ref{lemma-J} we know that for small enough $\ep>\delta>0$ there exists a $K_2>0$ such that $1-J_\Psi(s,r)\le K_2r$ for all $s\in\pO$. It then suffices to show that
\[
\int_0^\ep\int_{\partial \Omega} \p_x(B^N_{\tau_t}<r,x'_{\tau_t}\not\in \Omega, |(B^T_{\tau_t},A_{\tau_t})|<\delta)rd\sigma_0(s)dr=O(t^{3/2}).
\]
By \eqref{eq-NTZ-cor} we have $\{x'_{\tau_t}\not\in \Omega\}=\{h(B^T_{\tau_t}, A_{\tau_t};s)>r-B^N_{\tau_t}\}$ in $\mathcal{O}_x$. Thus we just need to show
\[
\int_0^\infty \p_x(B^N_{\tau_t}<r<B^N_{\tau_t}+h(B^T_{\tau_t}, A_{\tau_t};s), |(B^T_{\tau_t},A_{\tau_t})|<\delta)rdr=O(t^{3/2}).
\]
By changing order of integrals we have the left hand side of the above equation given by
\begin{align*}
&\E_x\left(\int_0^\infty {\mathbbm{1}}_{\{B^N_{\tau_t}<r<B^N_{\tau_t}+h(B^T_{\tau_t}, A_{\tau_t};s) \}}{\mathbbm{1}}_{\{ |(B^T_{\tau_t},A_{\tau_t})|<\delta\}} rdr\right)\\
 &=\frac{1}{2} \E_x\left[\bigg(\left(h^+(B^T_{\tau_t}, A_{\tau_t};s)\right)^2+2h^+(B^T_{\tau_t}, A_{\tau_t};s)B^N_{\tau_t}\bigg) {\mathbbm{1}}_{\{ |(B^T_{\tau_t},A_{\tau_t})|<\delta\}}   \right].
\end{align*}
Here $h^+$ is the positive part of $h$. By Lemma \ref{lemma-h-H} we know that there exists a constant $C_{\delta, s}>0$ depending on $\delta>0$ and $s\in \pO$ such that $|h(y, z;p)|\le C_{\delta, s}(y^2/2+|z|)$. Hence
\begin{align*}
&\E_x\left[\bigg(\left(h^+(B^T_{\tau_t}, A_{\tau_t};s)\right)^2+2h^+(B^T_{\tau_t}, A_{\tau_t};s)B^N_{\tau_t}\bigg) {\mathbbm{1}}_{\{ |(B^T_{\tau_t},A_{\tau_t})|<\delta\}}   \right]\\
&\le
C_1 \E_x\left[(B^T_{\tau_t})^4 \right]+C_2 \E_x\left[|A_{\tau_t}|^2 \right]
+C_3 \E_x\left[(B^T_{\tau_t})^2B^N_{\tau_t} \right]+C_4 \E_x\left[|A_{\tau_t}|\cdot B^N_{\tau_t} \right]
\end{align*}
for some positive constants $C_1, \dots, C_4$ depending on $\delta>0$ and $s\in\pO$.
We are left to show each of the five terms above is mostly of scale $O(t^{3/2})$. This can be checked easily as below.
\begin{align*}
 \E_x\left((B^T_{\tau_t})^4 \right)\le \E_x\left(|B^T_{t}|_*^4 \right)=t^2 \E_x\left(|B^T_{1}|_*^4 \right)=O(t^2),
\end{align*}
where $|B^T_{t}|_*$ is the running maximum of $|B^T_{t}|$.
Next, note $A_s=\beta_{\int_0^s (B^T_u)^2+(B^N_u)^2du}$ for all $s>0$ where $\beta$ is an independent standard Brownian motion, we have
\begin{align*}
 &\E_x\left[|A_{\tau_t}|^2 \right]= \E_x\left[ \int_0^{\tau_t} (B^T_u)^2+(B^N_u)^2du\right]\E_x\left(|\beta_1|^2\right) \\
 &\le t \E_x\left(|B^T_t|_*^2\right)+t \E_x\left(|B^N_t|_*^2\right) =O(t^2).
\end{align*}
Also by independence we have
\begin{align*}
 &\E_x\left((B^T_{\tau_t})^2B^N_{\tau_t} \right)=\E_x\left((B^T_{\tau_t})^2\right)\,\E_x\left(B^N_{\tau_t} \right)\le \E_x\left(|B^T_{t}|_*^2\right)\,\E_x\left(|B^N_{t}|_* \right)=O(t^{3/2}).
 \end{align*}
 At last since
  \begin{align*}
 &\E_x\left[|A_{\tau_t}|\cdot B^N_{\tau_t} \right]= \E_x\left[ \bigg(\int_0^{\tau_t} (B^T_u)^2+(B^N_u)^2du\bigg)^{1/2} B^N_{\tau_t}\right]\E_x\left(|\beta_1|\right) \\
 &\le\bigg( t \E_x\left(|B^T_t|_*^2\right)+t \E_x\left(|B^N_t|_*^2\right) \bigg)  \E_x\left( |B^N_{t}|\right)=O(t^{3/2}),
  \end{align*}
we complete the proof.

\subsection{Proof of equation \eqref{eq-claim-A3}}\label{App-claim-A3}
Note that for any $\ep>0$,
\begin{align*}
&\int_\ep^\infty\int_{\partial \Omega} \p_x(B^N_{\tau_t}<r,x'_{\tau_t}\not\in \Omega, |(B^T_{\tau_t}, A_{\tau_t})|<\delta)d\sigma_0(s)dr\\
&\le
\int_0^\infty\int_{\partial \Omega} \p_x(B^N_{\tau_t}<r,x'_{\tau_t}\not\in \Omega, |(B^T_{\tau_t}, A_{\tau_t})|<\delta)d\sigma_0(s)\frac{r}{\ep}dr.
\end{align*}
We then prove the claim by using the argument in above section.

\subsection{Proof of equation \eqref{eq-minor-I3-1}}\label{App-minor-I3-1}
The proof is similar to that of \eqref{eq-claim-A3}. We just need to show
\[
\int_0^\infty \p_x(B^N_{\tau_t}+h(B^T_{\tau_t}, A_{\tau_t};s)<r<B^N_{\tau_t}, |(B^T_{\tau_t},A_{\tau_t})|<\delta)rdr=O(t^{3/2}),
\]
that is to show
\begin{align*}
 &\E_x\left(\int_0^\infty {\mathbbm{1}}_{\{B^N_{\tau_t}<r<B^N_{\tau_t}+h(B^T_{\tau_t}, A_{\tau_t};s) \}}{\mathbbm{1}}_{\{ |(B^T_{\tau_t},A_{\tau_t})|<\delta\}} rdr\right)\\
 &=\frac{1}{2} \E_x\left[\bigg(\left(h^-(B^T_{\tau_t}, A_{\tau_t};s)\right)^2+2h^-(B^T_{\tau_t}, A_{\tau_t};s)B^N_{\tau_t}\bigg) {\mathbbm{1}}_{\{ |(B^T_{\tau_t},A_{\tau_t})|<\delta\}}   \right]=O(t^{3/2}),
\end{align*}
where $h^-(y,z;s)$ is the negative part of $h(y,z;s)$. The proof is as in Section \ref{App-claim-A3}.

\subsection{Proof of equation \eqref{eq-minor-I3-2}}\label{App-minor-I3-2}
Since
\begin{align*}
&\int_\ep^\infty\int_{\partial \Omega} \p_x(B^N_{\tau_t}>r,x'_{\tau_t}\in \Omega, |(B^T_{\tau_t},A_{\tau_t})|<\delta)d\sigma_0(s)dr\\
&\le\int_0^\infty \int_{\partial \Omega} \p_x(B^N_{\tau_t}>r,x'_{\tau_t}\in \Omega, |(B^T_{\tau_t},A_{\tau_t})|<\delta)\,d\sigma_0(s)\frac{r}{\ep}dr
\end{align*}
By using the same argument as in Section \ref{App-minor-I3-1} we conclude that the above term is $O(t^{3/2})$.

\subsection{Proof of equation \eqref{eq-C-4}}\label{App-C-4}
\begin{lemma}\label{lemma-A-tau}
Let $(-B^N_t, B^T_t, A_t)$ and $\tau_t$ be defined as before. Then for any small $\eta>0$, we have some $C>0$ such that
$\p(B^T_{\tau_t}>\tau_t^{1/2-\eta/2})=O(e^{-\frac{C}{t^\eta}})$ and $\p(|A_{\tau_t}|>|\tau_t|^{1-\eta})=O(e^{-\frac{C}{t^\eta}})$.
\end{lemma}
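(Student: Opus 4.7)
The plan is to establish the two bounds in turn, exploiting the fact that $B^T$ is independent of $(B^N,\tau_t)$ since $\tau_t$ is a functional of $B^N$ alone.

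For the first bound, I would condition on $\tau_t = \tau$: under this conditioning $B^T_{\tau_t}$ has the law of $B^T_\tau \sim N(0,\tau)$, so the standard Gaussian tail bound gives
$$
\p\bigl(B^T_{\tau_t} > \tau_t^{1/2-\eta/2} \,\bigm|\, \tau_t = \tau\bigr) = \p(N(0,1) > \tau^{-\eta/2}) \le e^{-\tau^{-\eta}/2}.
$$
Since $\tau_t \in (0,t]$ and $\eta > 0$, this conditional probability is uniformly at most $e^{-1/(2t^\eta)}$; averaging over the law of $\tau_t$ then yields the claim with $C = 1/2$.

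For the second bound, $|A_{\tau_t}|$ cannot simply be dominated by $\sup_{s \le t}|A_s|$, since $\tau_t^{1-\eta}$ may be much smaller than $t^{1-\eta}$. I would therefore insert a dyadic decomposition of $\tau_t$: with $t_k = 2^{-k}t$,
$$
\p\bigl(|A_{\tau_t}| > \tau_t^{1-\eta}\bigr) \le \sum_{k=0}^{\infty} \p\bigl(\sup_{0 \le s \le t_k}|A_s| > t_{k+1}^{1-\eta}\bigr).
$$
The key analytic input is a uniform estimate of the form $\p(\sup_{s \le T}|A_s| > y) \le C_0 e^{-c_0 y/T}$. To obtain it I would invoke the Dambis--Dubins--Schwarz representation $A_s = \beta_{V_s}$, where $V_s = \int_0^s((B^N_u)^2 + (B^T_u)^2)\,du$ is the quadratic variation of $A$ and $\beta$ is a Brownian motion; then on $\{V_T \le 2TM^2\}$ the running maximum $\sup_{s \le T}|A_s|$ is dominated by $\sup_{u \le 2TM^2}|\beta_u|$ by monotonicity of the time-change, and Doob's maximal inequality applied to $B^N,B^T$ together with the reflection principle for $\beta$ yield
$$
\p(\sup_{s \le T}|A_s| > y) \le \p(V_T > 2TM^2) + \p(\sup_{u \le 2TM^2}|\beta_u| > y) \le 8 e^{-M^2/(2T)} + 4 e^{-y^2/(4TM^2)}.
$$
The choice $M^2 = y/\sqrt 2$ balances the two exponents at $-y/(2\sqrt 2\, T)$. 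Substituting $T = t_k$ and $y = t_{k+1}^{1-\eta}$ then produces an exponent of the form $-c_1 2^{k\eta}/t^\eta$ with $c_1 > 0$ independent of $k$ and $t$, so the dyadic series is dominated for small $t$ by its $k=0$ term, giving $O(e^{-C/t^\eta})$ as claimed.

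The chief technical obstacle is the uniform running-maximum tail estimate for $A$: BDG-type polynomial moment bounds are too weak, and the explicit density of $A_1$ controls only the marginal at a fixed time. It is the time-change representation together with the optimization over $M$ that converts the available Gaussian ingredients into the desired exponential decay rate, after which the dyadic summation is routine.
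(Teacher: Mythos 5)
Your proof of the first bound is essentially the paper's: both exploit the independence of $B^T$ from $(B^N,\tau_t)$, rescale by $\sqrt{\tau_t}$, and use $\tau_t\le t$ to get a uniform Gaussian tail. For the second bound your route is genuinely different and it works. The paper also passes through the Dambis--Dubins--Schwarz representation $A_s=\beta_{V_s}$ with $V_s=\int_0^s((B^N_u)^2+(B^T_u)^2)\,du$, but instead of a uniform running-maximum tail it writes $\p(|A_{\tau_t}|>\tau_t^{1-\eta})$ as a tail probability for $|\beta_1|$ against the random ratio $\tau_t^{1-\eta}/\sqrt{V_{\tau_t}}$, bounds $V_{\tau_t}\le \tau_t\,(|B^T_{\tau_t}|_*+B^N_{\tau_t})^2$, and then removes the events $\{|B^T_{\tau_t}|_*>\tau_t^{1/2-\eta/2}\}$ and $\{B^N_{\tau_t}>\tau_t^{1/2-\eta/2}\}$; the latter requires the explicit Louchard joint density $\Phi(\xi,\tau;t)$ of $(B^N_{\tau_t},\tau_t)$ from Lemma \ref{lemma-joint}. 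Your argument replaces all of this with a dyadic decomposition over the value of $\tau_t$ plus the uniform estimate $\p(\sup_{s\le T}|A_s|>y)\le C_0e^{-c_0y/T}$, obtained by splitting on $\{V_T\le 2TM^2\}$ and optimizing $M$. What you gain: you never need the joint density of $(B^N_{\tau_t},\tau_t)$, and your event-inclusion argument avoids the paper's slightly informal treatment of $\beta_{V_{\tau_t}}/\sqrt{V_{\tau_t}}$ as a standard normal (the DDS Brownian motion is not independent of its time change, so your union-bound formulation is the cleaner one). What the paper's route buys is brevity, since the density $\Phi$ is already available and reused elsewhere. One small point to make explicit when you write this up: after substituting $T=t_k$, $y=t_{k+1}^{1-\eta}$, the exponent is $-c_12^{k\eta}/t^{\eta}$, and you should note that for $t\le1$ the series $\sum_k e^{-c_1(2^{k\eta}-1)/t^{\eta}}\le\sum_k e^{-c_1(2^{k\eta}-1)}$ converges, which is what justifies domination by the $k=0$ term.
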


\begin{proof}
First estimate comes from the independence of $B^T$ and $(B^N, \tau_t)$, hence
 \[
\p(B^T_{\tau_t}>\tau_t^{1/2-\eta/2})\le\p(|B^T_{1}|_*>\tau_t^{-\eta/2})\le \p(|B^T_{1}|_*>t^{-\eta/2})=O(e^{-\frac{C}{t^\eta}}).
 \]
To show the second estimate, from  \eqref{eq-phi} we have
 \begin{align*}
 &\p(B^N_{\tau_t}>\tau_t^{1/2-\eta/2})=\int_0^t\int_{\tau^{1/2-\eta/2}}^\infty\Phi(\xi,\tau;t)d\xi d\tau=\int_0^t\frac{e^{-\frac{1}{2\tau^{\eta}}}}{\pi\tau^{1/2}\sqrt{t-\tau}}d\tau\\
 &\le e^{-\frac{1}{2t^\eta}}\int_0^t\frac{1}{\pi\tau^{1/2}\sqrt{t-\tau}}d\tau= e^{-\frac{1}{2t^\eta}}.
 \end{align*}
 By considering $A_s$ as a time changed Brownian motion, i.e., $A_s=\beta_{\int_0^s (B^T_u)^2+(B^N_u)^2du}$ for all $s>0$ where $\beta$ is an independent standard Brownian motion, we obtain
  \begin{align*}
  &\p(|A_{\tau_t}|>|\tau_t|^{1-\eta})=\p\bigg(|\beta_{1}|>\frac{|\tau_t|^{1-\eta}}{\sqrt{\int_0^{\tau_t}(B^T_u)^2+(B^N_u)^2du}}\bigg)
\le \p\bigg(|\beta_{1}|>\frac{|\tau_t|^{1-\eta}}{|B^T_{\tau_t}|_*\sqrt{\tau_t}+B^N_{\tau_t}\sqrt{\tau_t}}\bigg)\\
  &\le  \p\bigg(|\beta_{1}|>\frac{|\tau_t|^{1-\eta}}{2\tau_t^{1-\eta/2}}\bigg)+\p (|B^T_{\tau_t}|_*>\tau_t^{1/2-\eta/2}) +\p(B^N_{\tau_t}>\tau_t^{1/2-\eta/2})\\
  &= \p\bigg(|\beta_{1}|>\frac{1}{2\tau_t^{\eta/2}}\bigg)+O(e^{-\frac{C}{t^\eta}})=O(e^{-\frac{C}{t^\eta}}).
   \end{align*}
   Hence the conclusion.
 \end{proof}
Now we are ready to show $|R_3(t)|=O(t^{3/2})$. It suffices to show
\[
\int_{\partial \Omega}  \,\E_x\left(\max\left(h^-(B^T_{\tau_t},A_{\tau_t};s)-B^N_{\tau_t},0\right){\mathbbm{1}}_{\{|(B^T_{\tau_t},A_{\tau_t})|<\delta\} }\right)\,d\sigma_0(s)=O(t^{3/2}).
\]
Note for $\xi\in[0,h^-(y,z;s))$, it holds that $|\xi-h^-(y,z;s)|\le h^-(y,z;s)$, hence we have
\[
|R_3(t)|\le \int_{\partial \Omega}  \,\E_x\left(h^-(B^T_{\tau_t},A_{\tau_t};s){\mathbbm{1}}_{\{B^N_{\tau_t}<h^-(B^T_{\tau_t},A_{\tau_t};s)\}}{\mathbbm{1}}_{\{|(B^T_{\tau_t},A_{\tau_t})|<\delta\} }\right)d\sigma_0(s)
\]
By Lemma \ref{lemma-h-H} we know that there exists $C_{\delta, s}>0$ depending on $\delta>0$ and $s\in \pO$ such that $|h(y, z;p)|\le C_{\delta, s}(y^2/2+|z|)$. By compactness of $\Omega$ we have
\begin{align*}
&|R_3(t)|\le C\, \E_x\left(\left(|B^T_{\tau_t}|^2+|A_{\tau_t}|\right){\mathbbm{1}}_{\{B^N_{\tau_t}<|B^T_{\tau_t}|^2+|A_{\tau_t}|\}}\right)\\
&\le
C\, \E_x\left(\left(|B^T_{\tau_t}|^2+|\tau_t|^{1-\eta}\right){\mathbbm{1}}_{\{B^N_{\tau_t}<2|\tau_t|^{1-\eta}\}}\right)+O(e^{-\frac{C}{t^\eta}}).
\end{align*}
for some constant $C>0$. The last inequality comes from Lemma \ref{lemma-A-tau}. Using \eqref{eq-phi} we have
\begin{align*}
&\E_x\bigg(|B^T_{\tau_t}|^2\cdot \p\left(B^N_{\tau_t}<2|\tau_t|^{1-\eta}\right)\bigg)=\E_x\bigg(|B^T_{\tau_t}|^2\bigg)\cdot \int_0^t\int_0^{2\tau^{1-\eta}}\Phi(\xi,\tau;t)d\xi d\tau\\
&\le t\,\int_0^t\frac{1-e^{-2\tau^{1-2\eta}}}{\pi\sqrt{\tau}\sqrt{t-\tau}}d\tau\le t\cdot C\,t^{1-2\eta}=O(t^{3/2}).
\end{align*}
Moreover, since
\begin{align*}
&\E_x\bigg(|\tau_t|^{1-\eta}\cdot \p\left(B^N_{\tau_t}<2|\tau_t|^{1-\eta}\right)\bigg)=
\int_0^t\int_0^{2\tau^{1-\eta}}\tau^{1-\eta}\,\Phi(\xi,\tau;t)d\xi d\tau\le\int_0^t\frac{2\tau^{2-3\eta}}{\pi\sqrt{\tau}\sqrt{t-\tau}}d\tau=O(t^{3/2}).
\end{align*}
We can then conclude that $|R_3(t)|=O(t^{3/2})$.

\subsection{Proof of \eqref{eq-C}}\label{App-C}
From Lemma \ref{lemma-h-H} and Lemma  \ref{lemma-A-tau} we obtain for some $C>0$ and any small $\eta>0$,
\begin{align*}
|C_1(t)|
&\le C\,\E_x\left( (|B^T_{\tau_t}|^3+|B^T_{\tau_t}A_{\tau_t}|){\mathbbm{1}}_{\{|(B^T_{\tau_t},A_{\tau_t})|<\delta\} }\right) \le
 C\,\E_x\bigg(2|\tau_t|^{3/2-3\eta/2} \bigg)+O(e^{-\frac{C}{t^\eta}})\\
&=\int_0^t \frac{\tau^{3/2-3\eta/2}}{\pi\tau^{1/2}(t-\tau)^{1/2}}
 =O(t^{3/2-3\eta/2})
\end{align*}
Similarly we have for some $C>0$ it holds that
\begin{align*}
C_2(t)\le
&\frac{1}{\delta}\left(\frac{1}{2}H+K_1\right)\int_{\partial \Omega}  \,\E_x\left(\left(|B^T_{\tau_t}|^2+|A_{\tau_t}|\right)^{3/2}\mathbbm{1}_{\{|(B^T_{\tau_t},A_{\tau_t})|\ge\delta\} }\right)\,d\sigma_0(s)\\
&\le \frac{C}{\delta}  \,\E\bigg( (|B^T_{\tau_t}|^3+|A_{\tau_t}|^{3/2})\bigg)=O(t^{3/2-3\eta/2}),
\end{align*}
where $H=\max_{s\in \pO}\{H_{\pO,0}(s) \}$ and $K_1=\max_{s\in \pO}\{k_1(s) \}$.


\end{document}